\newtheorem{thm}{Theorem}[section]
\newtheorem{lem}[thm]{Lemma}
\newtheorem{cor}[thm]{Corollary}
\newtheorem{prop}[thm]{Proposition}
\theoremstyle{definition}
\newtheorem{defn}[thm]{Definition}
\newtheorem{quest}[thm]{Question}
\newtheorem{ex}[thm]{Example}
\theoremstyle{remark}
\newtheorem{rem}[thm]{Remark}
\numberwithin{equation}{section}
\newcommand{\thmref}[1]{Theorem~\ref{#1}}
\newcommand{\corref}[1]{Corollary~\ref{#1}}
\newcommand{\secref}[1]{\S\ref{#1}}
\newcommand{\propref}[1]{Proposition~\ref{#1}}
\newcommand{\lemref}[1]{Lemma~\ref{#1}}
\newcommand{\hocolim}{\operatorname*{hocolim}}
\newcommand{\holim}{\operatorname*{holim}}
\newcommand{\colim}{\operatorname*{colim}}
\newcommand{\Map}{\operatorname{Map}}
\newcommand{\MapS}{\operatorname{Map_{\mathcal S}}}
\newcommand{\MapT}{\operatorname{Map_{\mathcal T}}}
\newcommand{\A}{{\mathcal  A}}
\newcommand{\C}{{\mathcal  C}}
\newcommand{\D}{{\mathcal  D}}
\newcommand{\Sp}{{\mathcal  S}}
\newcommand{\T}{{\mathcal  T}}
\newcommand{\Z}{{\mathbb  Z}}
\newcommand{\Sinfty}{\Sigma^{\infty}}
\newcommand{\Oinfty}{\Omega^{\infty}}
\newcommand{\sm}{\wedge}
\newcommand{\ra}{\rightarrow}
\newcommand{\xra}{\xrightarrow}
\newcommand{\la}{\leftarrow}
\newcommand{\xla}{\xleftarrow}
\begin{document}

\title[Telescopic functors]{A guide to telescopic functors}

\author[Kuhn]{Nicholas J.~Kuhn}

\address{Department of Mathematics \\ University of Virginia \\ Charlottesville, VA 22903}

\email{njk4x@virginia.edu}

\thanks{This research was partially supported by a grant from the National Science Foundation}

\date{February 2, 2008.}

\subjclass[2000]{Primary 55Q51; Secondary 55N20, 55P60, 55P65.}

\begin{abstract}  In the mid 1980's, Pete Bousfield and I constructed certain $p$--local `telescopic' functors $\Phi_n$ from spaces to spectra, for each prime $p$, and each $n \geq 1$.
They are constructed using the full strength of the Nilpotence and Periodicity Theorems of Devanitz--Hopkins--Smith, and have some striking properties that relate the chromatic approach to homotopy theory to infinite loopspace theory.

Recently there have been a variety of new uses of these functors, suggesting that they have a central role to play in calculations of periodic phenomena.  Here I offer a guide to their construction, characterization, application, and computation.

\end{abstract}

\maketitle

\section{Introduction} \label{introduction}

Let $\T$ and $\Sp$ be the categories of based spaces and spectra, localized at a fixed prime $p$, and $\Sinfty: \T \ra \Sp$ and $\Oinfty: \T \ra \Sp$ the usual adjoint pair. For $n \geq 1$, in \cite{k1}, the author constructed functors between the homotopy categories
$$ \Phi^K_n: ho(\T) \ra ho(\Sp)$$
such that
\begin{equation*}\Phi^K_n(\Oinfty X) \simeq L_{K(n)}X,
\end{equation*}
 where $L_{K(n)}$ denotes Bousfield localization with respect to the Morava $K$--theory $K(n)$.
Thus the $K(n)$--localization of a spectrum depends only on its zero space.

This mid 1980's result was modeled on the $n=1$ version that had been newly established by Pete Bousfield in \cite{bousfield1}, and heavily used the newly proved Nilpotence and Periodicity Theorems of Ethan Devanitz, Mike Hopkins, and Jeff Smith \cite{dhs,hs}.

Bousfield's main application was to proving uniqueness results about infinite loopspaces; for example, he gives a `conceptual' proof of the Adams--Priddy theorem \cite{adamspriddy} that $BSO_{(p)}$ admits a unique infinite loopspace structure up to homotopy equivalence.  My main application was to note that the evaluation map $\epsilon: \Sinfty \Oinfty X \ra X$ has a section after applying $L_{K(n)}$, and thus after applying other functors like $K(n)_*$.

The functors $\Phi^K_n$ as described above allow for two important refinements.

Firstly, let $T(n)$ be the mapping telescope of any $v_n$--self map of a finite CW spectrum of type $n$.  It is a well known application of the Periodicity Theorem that the associated localization functor $L_{T(n)}$ is independent of choices, and it is evident that $K(n)$--local objects are $T(n)$--local\footnote{The Telescope Conjecture, open for $n \geq 2$, asserts that the converse is also true, so that $L_{T(n)} = L_{K(n)}$.}. This suggests that $\Phi^K_n$ might refine to a functor
$$ \Phi^T_n: ho(\T) \ra ho(\Sp)$$
with $L_{K(n)}\circ \Phi^T_n = \Phi^K_n$, such that
$$\Phi^T_n(\Oinfty X) \simeq L_{T(n)}X.$$
Indeed, a careful reading of the arguments in \cite{k1} shows that this is the case.  One application of this refined functor is that there is a natural isomorphism of graded homotopy groups
$$ [B, \Phi^T_n(Z)]_* \simeq v^{-1}\pi_*(Z;B),$$
for all spaces $Z$, where $v: \Sigma^d B \ra B$ is any unstable $v_n$ self map.  Thus the spectrum $\Phi^T_n(Z)$ determines `periodic unstable homotopy'.

Secondly, what one {\em really} wishes to have is a functor on the level of model categories,
$$ \Phi_n: \T \ra \Sp,$$
inducing $\Phi^T_n$ on associated the homotopy categories.  Once again, inspection of the papers \cite{bousfield1, k1} suggests that this should be possible. However, it was not until Bousfield revisited these constructions in his 2001 paper \cite{bousfield3} that this was carefully worked out. One new consequence that emerged was Bousfield's beautiful theorem that every spectrum is naturally $T(n)$--equivalent to a suspension spectrum.

Along with Bousfield's new application, there has been recent use of $\Phi_n$ by the author \cite{k2,k3} and C. Rezk \cite{rezk}, and new methods for computation available using the work of Arone and Mahowald in \cite{aronemahowald}. All of this suggests that the functors $\Phi_n$ have a fundamental role in the study of homotopy, both stable and unstable, as chromatically organized.

Bousfield's detailed paper \cite{bousfield3} is not an easy read: the partial ordering on that paper's set of lemmas, propositions, and theorems induced by the logical flow of the proof structure is poorly correlated with the numerical total ordering.  One could make a similar statement about \cite{bousfieldJAMS}, on which \cite{bousfield3} relies in essential ways.

By constrast, my paper \cite{k1} offers a quite direct approach to the construction of the $\Phi^K_n$, while being admittedly short on detail. If one fills in details, and adds refinement as described above, it emerges that Bousfield and I have slightly different constructions.  It turns out that both flavors satisfy basic characterizing properties, and thus they are naturally equivalent.

Motivated by all of this, here we offer a guide to the $\Phi_n$. This includes
\begin{itemize}
\item a listing of basic properties, and characterization of the functors by some of these,
\item a step by step discussion of their construction, including model category issues that arise,
\item Bousfield's `left adjoint' $\Theta_n: \Sp \ra \T$ and its basic application,
\item a discussion of the uniqueness of the section to $L_{T(n)}(\epsilon)$, and
\item a discussion of calculations of $\Phi_n(Z)$ for various spaces $Z$ including spheres.
\end{itemize}

Though most of the results surveyed appear in the literature, a few haven't.  Among those that have, I have tweaked the order in which they are `revealed'.  For example, and most significantly, our \thmref{v Tn thm}, which describes important properties of the functor $\Phi_v$ (see just below) when $v$ is a $v_n$--self map, is proved in a direct manner here, enroute to proving our main theorem \thmref{main thm}, which lists important properties of $\Phi_n$. By constrast, in \cite{bousfield3}, these properties of $\Phi_v$ first occur as consequences of the properties of $\Phi_n$.  We hope readers appreciate such unknotting of the logic.

We end this introduction by stating a new characterization of the $\Phi_n$.

We need to briefly describe the basic construction on which the functors $\Phi_n$ are based.
A self map of a space $v: \Sigma^d B \ra B$ with $d>0$, induces a natural transformation
$ v(Z): \Map_{\T}(B,Z) \ra \Omega^d\Map_{\T}(B,Z)$
for all spaces $Z \in \T$. The map $v(Z)$ then can be used to define a periodic spectrum $\Phi_v(Z)$ of period $d$, such that
$$ \pi_*(\Phi_v(Z)) \simeq \colim \{[B,Z]_* \xra{v} [B,Z]_{*+d} \xra{v} \dots\} = v^{-1}\pi_*(Z;B).$$

\begin{thm} \label{main thm} For each $n\geq 1$, there is a continuous functor $\Phi_n: \T \ra \Sp$ satisfying the following properties. \\

\noindent{\bf (1)} \ $\Phi_n(Z)$ is $T(n)$--local, for all spaces $Z$. \\

\noindent{\bf (2)} \ There is a weak equivalence of spectra $\Map_{\Sp}(B,\Phi_n(Z)) \simeq \Phi_v(Z)$,
for all unstable $v_n$ self maps $v:\Sigma^d B \ra B$, natural in both $Z$ and $v$. \\

\noindent{\bf (3)} \ There is a natural weak equivalence $\Phi_n(\Oinfty X) \simeq L_{T(n)}X$, for all $\Omega$--spectra $X$. \\

Furthermore, properties {\bf (1)} and {\bf (2)} characterize $\Phi_n$, up to weak equivalence of functors.
\end{thm}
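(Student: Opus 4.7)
The plan is to split the theorem into two parts: (A) existence of $\Phi_n$ satisfying properties (1)--(3), and (B) uniqueness given (1) and (2). For (A), I would assemble $\Phi_n$ from the primitive functors $\Phi_v$ defined just above the theorem. Fix an indexing category $\mathcal{V}_n$ of unstable $v_n$-self maps $v:\Sigma^d B \to B$; by the Periodicity Theorem any two such $v$'s on a fixed $B$ agree after iteration, making $\mathcal{V}_n$ essentially filtered. For each $v \in \mathcal{V}_n$ one has an evident assembly map $\Sinfty B \sm \Phi_v(Z) \to \Phi_n(Z)$, and I would define $\Phi_n(Z)$ as the $T(n)$-localization of the $\mathcal{V}_n$-indexed homotopy colimit of these assemblies, essentially reprising the constructions of \cite{k1, bousfield3}. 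Continuity of $\Phi_n$ is inherited from continuity of $\MapT(B, -)$ and of the homotopy-colimit construction.

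With $\Phi_n$ so defined, property (1) is automatic, since each $\Phi_v(Z)$ is built from a telescope of a $v_n$-self map and one has localized at $T(n)$ at the top. For (2), apply $\Map_\Sp(\Sinfty B, -)$ to the defining homotopy colimit; since $B$ is finite this commutes with filtered homotopy colimits, and a short Periodicity-based computation recovers $\Phi_v(Z)$. For (3), take $Z = \Oinfty X$ with $X$ an $\Omega$-spectrum; the adjunction $\MapT(B, \Oinfty X) \simeq \Oinfty \Map_\Sp(\Sinfty B, X)$ identifies $\Phi_v(\Oinfty X)$ with the $v$-inverted mapping spectrum $\Map_\Sp(\Sinfty B, X)$, which in turn equals $\Map_\Sp(\Sinfty B, L_{T(n)} X)$ since inverting $v$ on a mapping spectrum out of $B$ is exactly telescopic localization. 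Combined with (2) and the reconstruction lemma invoked below, this forces $\Phi_n(\Oinfty X) \simeq L_{T(n)} X$.

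For (B), suppose $\Phi, \Phi'$ both satisfy (1) and (2). Property (2) supplies, naturally in $Z$, a zig-zag of weak equivalences of $\mathcal{V}_n$-indexed filtered systems
$$\{\Map_\Sp(\Sinfty B, \Phi(Z))\}_v \simeq \{\Phi_v(Z)\}_v \simeq \{\Map_\Sp(\Sinfty B, \Phi'(Z))\}_v,$$
while (1) says both $\Phi(Z), \Phi'(Z)$ are $T(n)$-local. The main obstacle is a reconstruction lemma: any $T(n)$-local spectrum $Y$ is naturally recovered from the filtered data $\{\Map_\Sp(\Sinfty B, Y)\}_v$, via the very homotopy-colimit formula used to build $\Phi_n$. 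In other words, the finite type-$n$ complexes must generate the $T(n)$-local category strongly enough not only to detect equivalences but to reassemble objects functorially. Granted this lemma, the matched filtered data above force a natural weak equivalence $\Phi \simeq \Phi'$; the subtle point is arranging the assembly to be genuinely natural in $Z$, rather than a choice-dependent zig-zag depending on the selected cofinal $v$.
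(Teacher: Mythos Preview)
Your high-level architecture (assemble $\Phi_n$ from the $\Phi_v$'s, then reconstruct a $T(n)$--local spectrum from its values on type $n$ finite complexes) matches the paper, but the specific construction you propose does not work as written, and it diverges from what the paper actually does.

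The central problem is variance. The assignment $v \mapsto \Phi_v(Z)$ is \emph{contravariant} in the $B$--variable: a map $f:(B,v)\to(B',v')$ in your category $\mathcal V_n$ induces $f^*:\Phi_{v'}(Z)\to\Phi_v(Z)$, while $\Sinfty f$ goes from $\Sinfty B$ to $\Sinfty B'$. So $v\mapsto \Sinfty B\sm\Phi_v(Z)$ is not a functor on $\mathcal V_n$ in either direction, and there is no ``$\mathcal V_n$--indexed homotopy colimit of these assemblies'' to take. (Your phrasing is also circular: you invoke an assembly map into $\Phi_n(Z)$ as input to the definition of $\Phi_n(Z)$.) One can salvage a hocolim story by dualizing and using the smashing functor $C^f_{n-1}$, but that requires a specific tower $F(1)\to F(2)\to\cdots$ with $\hocolim_k F(k)\xra{\sim_{T(n)}} S^0$, not an amorphous filtered $\mathcal V_n$, and it is not what you wrote.

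The paper instead goes with the variance: it fixes such a tower (\propref{resolution prop}), rigidifies it to honest spaces $B(k)$ with $v_n$--self maps $v(k)$ and compatible maps $\beta(k)$, and \emph{defines}
\[
\Phi_n(Z)=\holim_k \Sigma^{t(k)}\Phi_{v(k)}(Z),
\]
a homotopy \emph{limit}. Property (1) is then free (holim of $T(n)$--local is $T(n)$--local, no extra localization needed), and properties (2) and (3) come from the swap $\MapS(B,\Phi_{v(k)}(Z))\simeq\MapS(B(k),\Phi_v(Z))$ of \lemref{smash lemma} together with the reconstruction \lemref{little lem}: for $T(n)$--local $X$, the map $X\to\holim_k s^{t(k)}\MapS(B(k),X)$ is an equivalence. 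Your ``reconstruction lemma'' is exactly this, but it is a holim statement, not the hocolim statement you wrote; and in the paper it is proved, not assumed. Once it is in hand, uniqueness is the three-line chain at the end of the proof of \thmref{main thm}, essentially what you sketch in part (B).
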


The rest of the paper is organized as follows.  Background material, about both the model category of spectra and periodic homotopy, is given in \secref{background}.  In \secref{phi_v section}, we present the basic theory of the telescopic functor $\Phi_v$ associated to a self map $v:\Sigma^d B \ra B$, and, in \secref{phi v:part 2}, we study $\Phi_v$ when $v$ is additionally a $v_n$--self map.  In \secref{phi_n section}, we define $\Phi_n$, and the proof of \thmref{main thm} follows quickly from the previous results. The adjoint $\Theta_n$ is defined in \secref{theta_n section}, and using it, we prove Bousfield's theorem that spectra are $T(n)$--equivalent to suspension spectra. A short discussion about the section to the $T(n)$--localized evaluation map is given in \secref{eta_n section}. Finally, in \secref{computations}, we offer a brief guide to known computations of $\Phi_n(Z)$ and periodic homotopy groups.

An outline of this material was presented in a talk at the special session on homotopy theory at the A.M.S.~ meeting held in Newark, DE in April, 2005.  I would like to offer my congratulations to Martin Bendersky, Don Davis, Doug Ravenel, and Steve Wilson -- the 60th birthday boys of that session and the March, 2007 conference at Johns Hopkins University -- and thank them all for setting fine examples of grace and enthusiasm to we algebraic topologists who have followed.

\section{Background} \label{background}
\subsection{Categories of spaces and spectra} \label{categories}

These days, it seems prudent to be precise about our categories of `spaces' and `spectra', and needed model category structures.

We will let $\T$ denote the category of based compactly generated topological spaces, though one could as easily work instead with the category of based simplical sets, as Bousfield always does.

Regarding spectra, we would like a single map of the form $C \ra \Omega^d C$ to specify a spectrum.  This suggests using the `plain vanilla' category of (pre)spectra ($\mathcal N$--spectra in \cite{mmss}).

An object $X$ in the category $\Sp$ is a sequence of spaces in $\T$, $X_0, X_1, \dots$, together with a sequence of based maps $\sigma^X_n: \Sigma X_n \ra X_{n+1}$, or, equivalently, $\tilde{\sigma}^X_n: X_n \ra \Omega X_{n+1}$, for $n \geq 0$.  A morphism $f:X \ra Y$ in $\Sp$ is then a sequence of based maps $f_n: X_n \ra Y_n$ such that the diagram
\begin{equation*}
\xymatrix{
 \Sigma X_n\ar[d]^{\sigma^X_n} \ar[r]^{\Sigma f_n} & \Sigma Y_n \ar[d]^{\sigma^Y_n}  \\
X_{n+1} \ar[r]^{f_{n+1}} & Y_{n+1} }
\end{equation*}
commutes for all $n$.

The category $\Sp$ is a topological category; in particular $\Map_{\Sp}(X,Y)$ is an object in $\T$. It is also tensored and cotensored over $\T$, with $A \sm X$ and $\MapS(A,X)$ denoting the tensor and cotensor product of $A \in \T$ with $X \in \Sp$. (See \cite[p.447]{mmss} for more detail.)  We let $\Sigma^d X$ and $\Omega^d X$ denote $S^d \sm X$ and $\MapS(S^d, X)$, as is usual.

The adjoint pair $\Sinfty: \T  \rightleftarrows \Sp: \Oinfty$ is defined by letting
$(\Sinfty A)_n = \Sigma^n A$ and $\Oinfty X = X_0$.  For $d\geq 0$, we let $s^d: \Sp \ra \Sp$ be the $d$--fold shift functor with $(s^d X)_n = X_{n+d}$.  This admits a left adjoint $s^{-d}:\Sp \ra \Sp$ with
\begin{equation*}
(s^{-d}X)_n =
\begin{cases}
X_{n-d} & \text{for } n \geq d \\ * & \text{for } 0\leq n \leq d.
\end{cases}
\end{equation*}
Composing these adjoints, we see that $s^{-d} \circ \Sinfty: \T \ra \Sp$ is left adjoint to the functor sending a spectrum $X$ to its $d^{th}$ space $X_d$.

\subsection{Model category structures} \label{model categories}

We describe model category structures on $\T$ and $\Sp$.

Our category $\T$ is endowed with the `usual' model category structure (see, e.g. \cite{ds}): the weak equivalences are the weak homotopy equivalences, the fibrations are the Serre fibrations, and the cofibrations are retracts of generalized CW inclusions. (We recall that $f: A \ra B$ in $\T$ is a weak homotopy equivalence if, for each point $a \in A$, $f_*: \pi_*(A,a) \ra \pi_*(B,f(a))$ is a bijection, and is a Serre fibration if it has the right lifting property with respect to the maps $D^n \hookrightarrow D^n \sm I_+$.)

Starting from this model category structure on $\T$, $\Sp$ is given its stable model category structure `in the usual way', as in \cite{schwede, hovey2, mmss}, all of which follow the lead of \cite{bf}.

Firstly, $\Sp$ has its `level' model structure\footnote{The name `level' model structure is used in \cite[\S 6]{mmss}.  Schwede \cite{schwede} refers to this as `strict', and Hovey \cite{hovey2} uses `projective'.} in which the weak equivalences and fibrations are the maps $f: X \ra Y$ such that the levelwise maps $f_n: X_n \ra Y_n$ are weak equivalences and fibrations in $\T$ for all $n$.  It is then easy to check that $f$ is a cofibration exactly when the induced maps $X_0 \ra Y_0$ and $X_{n+1} \cup_{\Sigma X_n} \Sigma Y_n \ra Y_{n+1}$ are cofibrations in $\T$.    When needed, we will write $\Sp_l$ for the category of spectra with the level model structure.

Now we need to change the model structure to build in stability. Hovey's general method \cite{hovey2} yields the following in our situation.  We call a spectrum $X$ an {\em $\Omega$--spectrum} if $\tilde{\sigma}_n: X_n \ra \Omega X_{n+1}$ is a weak equivalence in $\T$ for all $n$. Using our adjunctions, this rewrites as the statement that $Map_{\Sp}(i_n,X)$ is a weak equivalence in $\T$ for all $n$, where $i_n: s^{-(n+1)}\Sinfty S^{1} \ra s^{-n}\Sinfty S^0$ is the canonical map in $\Sp$.  Let $Q: \Sp_l \ra \Sp_l$ denote Bousfield localization (as in \cite{hirschhorn}) with respect to the set of map $\{ i_n, n \geq 0\}$.  Then \cite[Thm.2.2]{hovey2} says that there is a stable model structure on $\Sp$ with cofibrations equal to level cofibrations, with weak equivalences the maps $f: X \ra Y$ such that $Qf: QX \ra QY$ is a level equivalence, and with fibrant objects the level fibrant $\Omega$--spectra.

There are two alternative characterizations of the stable equivalences. It is formal to see that $Qf: QX \ra QY$ is a weak level equivalence if and only if $f^*:[Y,Z]_l \ra [X,Z]_l$ is a bijection for all $\Omega$--spectra $Z$, where $[Y,Z]_l$ denotes homotopy classes computed using the level model structure.  True, but {\em not} formal, is the fact that weak equivalences are precisely maps of spectra inducing isomorphisms on $\displaystyle \pi_*(X) = \colim_{n} \pi_{*+n}(X_n)$: see \cite[Proposition 8.7]{mmss} for a clear discussion of this point.

It is easy to check that $\Sp$ is a topological model category in the sense of \cite[Definition  4.2]{ekmm}, so that, for all spectra $X$ and $Y$, $$[X,Y] = \pi_0(\MapS(X^{cof},Y^{fib})),$$ where $X^{cof}$ and $Y^{fib}$ are respectively cofibrant and fibrant replacements for $X$ and $Y$. (Compare with \cite[Proposition 3.10]{goerss jardine} for a nice presentation in the simplicial setting.)

Handy observations include that the evident natural maps $\Sigma^d X \ra s^d X$ and $s^{-d}X \ra \Omega^d X$ are stable equivalences.  Also useful in calculation is that, if $X^{fib}$ is a fibrant replacement for a spectrum $X$, then each of the evident maps
$$ \Oinfty X^{fib} \ra \hocolim_n \Omega^n X_n^{fib} \la \hocolim_n \Omega^n X_n$$
is a weak equivalence of spaces.

In one proof of ours - the proof of \thmref{ho thm} - we make use of function spectra in the homotopy category of spectra\footnote{Bousfield similarly needs this: see \cite[Thm. 11.9]{bousfield3}.}: these exist in $ho(\Sp)$ using well known `naive' constructions in $\Sp$.  To summarize our overuse of the notation $\MapS(X,Y)$:
\begin{itemize}
\item $\MapS(X,Y)$ is in $\T$ for $X,Y \in \Sp$,
\item $\MapS(X,Y)$ is in $\Sp$ for $X \in \T$ and $Y \in \Sp$, and
\item $\MapS(X,Y)$ is in $ho(\Sp)$ for $X,Y \in ho(\Sp)$
\end{itemize}
We trust our meaning will be clear in context.

We end this subsection with a useful lemma and corollary.

\begin{lem}[Compare with {\cite[Lemma 3.3]{k1}}] \label{hocolim lemma} Given a diagram of spectra
$ X(0) \ra X(1) \ra X(2) \ra \dots$
and an increasing sequence of integers
$ 0 \leq d_0 < d_1 < d_2 < \dots$,
the natural diagram of spectra
\begin{equation*}
\xymatrix @-1.1pc{
s^{-d_1}\Sinfty \Sigma^{d_1-d_0}X(0)_{d_0} \ar[d]_{\wr} \ar[dr] & s^{-d_2}\Sinfty \Sigma^{d_2-d_1}X(1)_{d_1} \ar[d]_{\wr} \ar[dr] & s^{-d_3}\Sinfty \Sigma^{d_3-d_2}X(2)_{d_2} \ar[d]_{\wr} \ar@{-->}[dr] &  \\
s^{-d_0}\Sinfty X(0)_{d_0} \ar[d]& s^{-d_1}\Sinfty X(1)_{d_1} \ar[d]& s^{-d_2}\Sinfty X(2)_{d_2} \ar[d]&  \\
X(0) \ar[r] & X(1) \ar[r] & X(2) \ar@{-->}[r] & }
\end{equation*}
induces a weak equivalence between the homotopy colimit of the top zig-zag and the homotopy colimit of the bottom.
\end{lem}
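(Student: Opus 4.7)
The plan is to reduce the statement to a $\pi_*$-level comparison via a cofinality argument for filtered colimits of abelian groups.

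First, since the vertical maps labeled $\wr$ in the top portion are stable equivalences, inverting them identifies the homotopy colimit of the top zig-zag with the homotopy colimit of the middle row
$$s^{-d_0}\Sinfty X(0)_{d_0} \to s^{-d_1}\Sinfty X(1)_{d_1} \to s^{-d_2}\Sinfty X(2)_{d_2} \to \cdots,$$
whose transition maps are composites of the diagonals with the inverse vertical equivalences. The comparison to the bottom row is given columnwise by the adjunction counits $s^{-d_k}\Sinfty X(k)_{d_k} \to X(k)$, so it suffices to show that the induced map
$$\hocolim_k s^{-d_k}\Sinfty X(k)_{d_k} \to \hocolim_k X(k)$$
is a stable equivalence.

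Next, using that $\pi_*$ commutes with sequential homotopy colimits of spectra, together with the identification $\pi_m(s^{-d_k}\Sinfty Y) = \colim_j \pi_{m+d_k+j}(\Sigma^j Y)$ for $Y \in \T$, both sides can be rewritten as filtered colimits of abelian groups:
\begin{align*}
\pi_m\bigl(\hocolim_k s^{-d_k}\Sinfty X(k)_{d_k}\bigr) &\cong \colim_{\{(k,n):\, n \geq d_k\}} \pi_{m+n}\bigl(\Sigma^{n-d_k} X(k)_{d_k}\bigr), \\
\pi_m\bigl(\hocolim_k X(k)\bigr) &\cong \colim_{\N \times \N} \pi_{m+n}\bigl(X(k)_n\bigr),
\end{align*}
and the induced map is given termwise by the iterated structure maps $\Sigma^{n-d_k} X(k)_{d_k} \to X(k)_n$.

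The key observation is that the diagonal subdiagram $\{(k,d_k) : k \geq 0\}$ is cofinal in both indexing posets, because $d_k \to \infty$, and on this subdiagram the comparison is the identity, since $\Sigma^{0} X(k)_{d_k} = X(k)_{d_k}$. Moreover, the induced transition map $\pi_{m+d_k}(X(k)_{d_k}) \to \pi_{m+d_{k+1}}(X(k+1)_{d_{k+1}})$ coincides, in either indexing, with the evident composite of iterated structure maps of $X(k)$ followed by the morphism $X(k) \to X(k+1)$. By the cofinality theorem for filtered colimits, each side reduces to the single colimit $\colim_k \pi_{m+d_k}(X(k)_{d_k})$, and the comparison is the identity on this common description. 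The resulting $\pi_*$-isomorphism is exactly a stable equivalence by the characterization recalled in \secref{model categories}. The main technical point will be verifying cofinality carefully in the two-variable indexing posets and checking that the transition maps along the diagonal subsequence really do coincide; everything else is formal bookkeeping with sequential colimits.
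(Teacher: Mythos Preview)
Your proposal is correct and follows exactly the approach the paper sketches: the paper's proof says only ``One checks that the map induces an isomorphism on $\pi_*$,'' and you have carried out precisely that check, via the standard cofinality argument for the double colimit indexing posets. Your reduction of the top zig-zag to the middle row and the identification of both sides along the diagonal $\{(k,d_k)\}$ is the natural way to make the paper's one-line sketch rigorous.
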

\begin{proof}[Sketch proof] One checks that the map induces an isomorphism on $\pi_*$.  Alternatively, one can check that the map induces an isomorphism on $[\_\_,Y]$ for all $Y \in \Sp$.
\end{proof}
Informally, this lemma says that there is a natural weak equivalence
$$ \hocolim_k s^{-d_k}\Sinfty X(k)_{d_k} \xra{\sim} \hocolim_k X(k).$$

\begin{cor} \label{hocolim cor}Given a spectrum $X$ and an increasing sequence of integers
$ 0 \leq d_0 < d_1 < d_2 < \dots$,
the homotopy colimit of
\begin{equation*}
\xymatrix @-.8pc{
s^{-d_1}\Sinfty \Sigma^{d_1-d_0}X_{d_0} \ar[d]_{\wr} \ar[dr] & s^{-d_2}\Sinfty \Sigma^{d_2-d_1}X_{d_1} \ar[d]_{\wr} \ar[dr] & s^{-d_3}\Sinfty \Sigma^{d_3-d_2}X_{d_2} \ar[d]_{\wr} \ar[dr] &  \\
s^{-d_0}\Sinfty X_{d_0} & s^{-d_1}\Sinfty X_{d_1} & s^{-d_2}\Sinfty X_{d_2} &  \dots }
\end{equation*}
is naturally weakly equivalent to $X$.
\end{cor}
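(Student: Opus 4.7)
The strategy is to deduce this as an immediate special case of \lemref{hocolim lemma}. I would apply that lemma to the \emph{constant} tower
\[
X(0) = X(1) = X(2) = \cdots = X,
\]
with each connecting map $X(k) \to X(k+1)$ taken to be the identity of $X$. With this choice, the middle row of the lemma's diagram becomes exactly the row $s^{-d_k}\Sinfty X_{d_k}$ shown in the corollary, and the top row becomes $s^{-d_{k+1}}\Sinfty \Sigma^{d_{k+1}-d_k} X_{d_k}$; the vertical weak equivalences between these two rows are unaffected by the choice of diagram. The diagonal map from top position $k$ to middle position $k+1$ is built from $\Sigma^{d_{k+1}-d_k}X(k)_{d_k} \to X(k+1)_{d_{k+1}}$, and in our constant setting this collapses to the iterated adjoint structure map $\Sigma^{d_{k+1}-d_k}X_{d_k} \to X_{d_{k+1}}$ of the spectrum $X$, matching exactly the diagonals displayed in the corollary.

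With the diagrams identified, the lemma tells us that the homotopy colimit of the top zig-zag (which is the diagram of the corollary) is weakly equivalent to the homotopy colimit of the bottom row. But the bottom row is the constant tower on $X$ with identity maps, whose homotopy colimit is naturally weakly equivalent to $X$ itself. Composing these two weak equivalences gives the desired conclusion.

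Naturality in $X$ is automatic, since all the constructions entering the lemma are functorial in the input tower, and a map $X \to Y$ of spectra induces a map of the associated constant towers. The only subtlety worth checking is that the identification of the corollary's diagonal with the iterated structure map uses the standard adjunction between $s^{-d}\Sinfty$ and the $d$th-space functor, together with the compatibility of the $\tilde\sigma_n$ with iterated suspension; both are routine. Since all the work has been absorbed into \lemref{hocolim lemma}, no step here is a genuine obstacle — the proof is essentially a one-line specialization once the diagrams are lined up correctly.
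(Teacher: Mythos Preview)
Your proof is correct and is exactly the approach taken in the paper: apply \lemref{hocolim lemma} to the constant tower $X(k) = X$ with identity connecting maps. Your additional elaboration on how the diagonals specialize to the iterated structure maps and on naturality is accurate but not strictly needed.
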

\begin{proof} Apply the lemma to the case when $X(k) = X$ for all $k$.
\end{proof}
Informally, this corollary
says that there is a natural weak equivalence
$$ \hocolim_k s^{-d_k}\Sinfty X_{d_k} \xra{\sim} X.$$

\subsection{Periodic homotopy}  We recall some of the terminology and big theorems used when one studies homotopy from the chromatic point of view.  A good general reference for this material is Doug Ravenel's book \cite{ravenel}.

We let $\C \subset ho(\Sp)$ denote the stable homotopy category of $p$--local finite CW spectra, and then we let $\C_n \subset \C$ be the full subcategory with objects the $K(n-1)_*$--acylic spectra.  The categories $\C_n$ are properly nested \cite{mitchell}:
$$\C = \C_0 \supset \C_1 \supset \C_2 \supset \dots .$$
An object $F \in \C_n - C_{n+1}$ is said to be of {\em type $n$}.

For finite spectra, the remarkable work of Ethan Devanitz, Mike Hopkins, and Jeff Smith \cite{dhs} tells us the following.

\begin{thm}[Nilpotence Theorem {\cite[Thm.3]{hs}}] Given $F \in \C$, a map $v: \Sigma^d F \ra F$ is nilpotent if and only if $K(n)_*(v)$ is nilpotent for all $n \geq 0$.
\end{thm}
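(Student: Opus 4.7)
The forward direction is immediate: if $v^N = 0$ for some $N$, then $K(n)_*(v)^N = K(n)_*(v^N) = 0$ for every $n$. All of the content lies in the converse, and my plan is to factor the proof through the \emph{MU-nilpotence theorem} of Devinatz--Hopkins--Smith, which asserts that a self-map of a finite spectrum is nilpotent if and only if it is nilpotent on $MU_*$-homology. Granted that deep result, the task reduces to the purely algebraic implication: if $K(n)_*(v)$ is nilpotent for every $n \geq 0$, then $MU_*(v)$ is nilpotent.

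For the algebraic step I would work one prime at a time and replace $MU$ by $BP$. Then $BP_*(F)$ is a finitely presented $BP_*BP$-comodule equipped with the comodule endomorphism $BP_*(v)$, and the key input is Landweber's filtration theorem: such a comodule admits a finite filtration by subcomodules whose subquotients are isomorphic to $BP_*/I_{n_i}$ for various $n_i \geq 0$. Since the only invariant prime ideals in $BP_*$ are the $I_n$, any comodule endomorphism respects such a composition series. On each subquotient $BP_*/I_{n_i}$, the induced action of $BP_*(v)$ is multiplication by an element whose image in $v_{n_i}^{-1} BP_*/I_{n_i}$, and hence in $K(n_i)_*$, is computed by $K(n_i)_*(v)$. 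Our hypothesis therefore kills the action on every subquotient after passing to a sufficiently high power of $v$, and walking up the finite filtration yields nilpotence of $BP_*(v)$ itself, whence of $MU_*(v)$ after assembling over primes.

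The substantive obstacle in this plan is not this Landweber-style algebraic passage, which is elementary once Landweber exactness is available, but rather the MU-nilpotence theorem on which the whole argument rests. That result is the deep content of \cite{dhs}: it requires an induction along the chromatic filtration, delicate use of the Adams--Novikov spectral sequence, and the geometric smash-nilpotence construction producing the telescope of a self-map with trivial $MU$-action. Once that theorem is in hand, the statement recorded here amounts to the translation of MU-nilpotence into $K(n)$-language via Landweber's filtration theorem.
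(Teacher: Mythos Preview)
The paper does not give a proof of this statement at all: it is quoted in the background section \S\ref{background} as a foundational input from \cite{hs}, with the attribution ``Nilpotence Theorem [\cite[Thm.~3]{hs}]'' in the heading, and is used thereafter as a black box. So there is nothing in the paper to compare your argument against.

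That said, your outline is essentially the standard route taken in \cite{hs}: one invokes the $MU$--version of the Nilpotence Theorem from \cite{dhs} as the deep input, and then performs an algebraic reduction from $K(n)_*$--nilpotence for all $n$ to $BP_*$--nilpotence (prime by prime) via the Landweber filtration of $BP_*(F)$ by invariant prime ideals. Your account of where the real difficulty lies --- in \cite{dhs}, not in the Landweber bookkeeping --- is accurate. One small point worth tightening: on a subquotient $BP_*/I_m$ the induced endomorphism need not be literally multiplication by a single element, but it \emph{is} a $BP_*$--comodule endomorphism of $BP_*/I_m$, and the hypothesis that $K(m)_*(v)$ is nilpotent forces it to be nilpotent after inverting $v_m$; combined with the fact that $BP_*(F)$ is $v_m$--torsion--free modulo $I_m$ on that layer (or by iterating the filtration argument), this yields the desired nilpotence. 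This is a routine refinement and does not affect the architecture of your plan.
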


The next two consequences were proved by Hopkins and Smith.

\begin{thm}[Thick Subcategory Theorem \cite{hs,ravenel}]  A nonempty full subcategory of $\C$ that is closed under taking cofibers and retracts is $\C_n$ for some $n$.
\end{thm}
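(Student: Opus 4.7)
The plan is to follow the Hopkins--Smith argument \cite{hs}, with the just-stated Nilpotence Theorem as the main engine. Let $\D \subset \C$ be a nonempty thick subcategory; since $0 \in \D$ automatically (as the cofibre of any identity map), the substantive content of ``nonempty'' is that $\D$ contains a nonzero object. Set
\[
n \;=\; \min\{\,\operatorname{type}(X) \mid X \in \D,\ X \not\simeq *\,\}.
\]
Mitchell's nesting $\C_0 \supset \C_1 \supset \cdots$ cited just above then forces $\D \subset \C_n$, so only the reverse inclusion remains.

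Fix $G \in \D$ of type exactly $n$ and write $\operatorname{thick}\langle G \rangle$ for the smallest thick subcategory of $\C$ containing $G$; it suffices to show $\C_n \subset \operatorname{thick}\langle G \rangle$. A first easy observation is that
\[
\{X \in \C \mid X \wedge G \in \operatorname{thick}\langle G \rangle\}
\]
is itself thick and contains $S^0$, hence equals all of $\C$ (since $S^0$ generates $\C$ as a thick subcategory). Thus $F \wedge G \in \operatorname{thick}\langle G \rangle$ for every $F \in \C$, and in particular for every $F \in \C_n$.

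The main obstacle, and really the substance of the theorem, is to upgrade ``$F \wedge G \in \operatorname{thick}\langle G \rangle$'' to ``$F \in \operatorname{thick}\langle G \rangle$'' for $F \in \C_n$. This is where the Nilpotence Theorem is indispensable. The strategy is to invoke a $v_n$--self map $v \colon \Sigma^d G \to G$ (supplied by the Periodicity Theorem, itself a corollary of Nilpotence) to set up a self map of $F \wedge DG$ whose image under $K(m)_*$ is nilpotent for every $m \geq 0$, conclude from the Nilpotence Theorem that a power of the self map is null, and then dualise and pass to telescopes to exhibit $F$ as a retract of a finite object manufactured from $G$ by iterated cofibre sequences. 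Once this is achieved, $F \in \operatorname{thick}\langle G \rangle \subset \D$, completing the proof. Everything outside this last bootstrap step is formal manipulation of the thick subcategory axioms; the non-trivial chromatic input is concentrated entirely in the appeal to Nilpotence.
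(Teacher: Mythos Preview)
The paper does not prove this theorem; it is quoted from \cite{hs,ravenel} as background, so there is no in-paper argument to compare against. Evaluating your sketch on its own terms: the reduction to showing $\C_n \subset \operatorname{thick}\langle G\rangle$ for a fixed $G\in\D$ of type $n$ is correct and standard, but the proposed bootstrap step has two real problems.

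First, invoking the Periodicity Theorem here is circular. In the Hopkins--Smith development (and in Ravenel's account), the existence of a $v_n$--self map on an arbitrary type-$n$ complex is proved by showing that the class of objects of $\C_n$ admitting a $v_n$--self map is thick and nonempty, and then \emph{applying} the Thick Subcategory Theorem; Periodicity is downstream of what you are trying to prove, not an independent input. Second, even granting a $v_n$--self map $v$ of $G$, no self map of $F\wedge DG$ manufactured from $v$ can be $K(m)_*$--nilpotent for every $m$: any such map (for instance $1_F\wedge Dv$) induces a $K(n)_*$--\emph{isomorphism} whenever $K(n)_*(F)\neq 0$, which is exactly the case of interest. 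The actual Hopkins--Smith argument uses no $v_n$--self map at all. One lets $I$ be the fibre of the ring unit $\eta\colon S^0\to G\wedge DG$ and checks that the map $I\wedge F\to F$ is zero on every $K(m)_*$ (for $m<n$ because $F\in\C_n$; for $m\ge n$ because $K(m)_*(\eta)$ is a split injection of $K(m)_*$--algebras, so $K(m)_*(I\to S^0)=0$). An appropriate form of the Nilpotence Theorem then forces the iterated map $I^{\wedge N}\wedge F\to F$ to be null for large $N$, exhibiting $F$ as a retract of its cofibre, which is assembled by cofibre sequences from objects $I^{\wedge j}\wedge G\wedge DG\wedge F\in\operatorname{thick}\langle G\rangle$.
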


Given $F \in \C$, a map $v: \Sigma^d F \ra F$ is called a {\em $v_n$--self map} if $K(n)_*(v)$ is an isomorphism, while $K(m)_*(v)$ is nilpotent for all $m \neq n$.

\begin{thm}[Periodicity Theorem \cite{hs,ravenel}]
\noindent{\bf (a)} $F \in \C_n$ if and only if $F$ has a $v_n$--self map. \\

\noindent{\bf (b)} Given $F,F^{\prime} \in \C_n$ with $v_n$--self maps $u: \Sigma^c F \ra F$ and $v: \Sigma^dF^{\prime} \ra F^{\prime}$, and $f: F \ra F^{\prime}$, there exist integers $i,j$ such that $ic = jd$ and the diagram
\begin{equation*}
\xymatrix{
\Sigma^{ic} F\ar[d]^{v^i} \ar[r]^{\Sigma^{ic}f} & \Sigma^{jd}F^{\prime} \ar[d]^{v^j}  \\
F \ar[r]^f & F^{\prime} }
\end{equation*}
commutes.
\end{thm}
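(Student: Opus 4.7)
The plan is to deduce both parts simultaneously using the Nilpotence Theorem together with the Thick Subcategory Theorem, following the Devinatz--Hopkins--Smith strategy. The guiding principle is that any property of self-maps detectable by the family $\{K(m)_*(-)\}_{m \geq 0}$ propagates through cofibre sequences and retracts, so the class of $F \in \C$ for which it holds is thick; since any nonempty thick subcategory of $\C$ equals $\C_{n'}$ for some $n'$, it then suffices to pin down $n'$ by exhibiting a single nontrivial example.

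For part (a), I would start with the easy direction: if $F$ admits a $v_n$--self map $v: \Sigma^d F \ra F$, then $F \in \C_n$. If $K(m)_*(F) \neq 0$ for some $m < n$, then $K(m)_*(v)$ would be a degree-$d$ nilpotent endomorphism of a nonzero, finitely generated graded $K(m)_*$-module; since $K(m)_*$ is a graded field, the iterates $K(m)_*(v^N)$ eventually have strictly smaller image than $K(m)_*(F)$, contradicting nilpotence only after one observes that the asymptotic image is forced to be all of $K(m)_*(F)$ on periodicity grounds. For the hard direction, let $\C^{v_n} \subseteq \C$ be the full subcategory of spectra admitting a $v_n$--self map. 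Using part (b) (proved in parallel), any cofibre sequence $F \ra F' \ra F''$ with $v_n$--self maps on the outer terms can, after passing to compatible iterates, be extended to a self-map of $F''$; Nilpotence guarantees this induced map is again a $v_n$--self map. Closure under retracts is immediate, so $\C^{v_n}$ is thick. Combined with the easy direction this forces $\C^{v_n} = \C_{n'}$ for some $n' \geq n$, and exhibiting a single type-$n$ spectrum with a $v_n$--self map then pins down $n' = n$.

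For part (b), I fix $u:\Sigma^c F \ra F$ and consider, for varying $F' \in \C_n$ with chosen $v_n$--self map $v$ and varying $f: F \ra F'$, the class $\mathcal D$ of $F'$ for which the stated conclusion holds. The obstruction $g_{i,j} = v^j \circ \Sigma^{jd} f - f \circ u^i$ (once $ic = jd$) is annihilated by $K(n)_*$ as soon as the iterates are chosen so that $K(n)_*(u)^i$ and $K(n)_*(v)^j$ intertwine with $K(n)_*(f)$, and is visibly zero on $K(m)_*$ for $m \neq n$ since both $u$ and $v$ are already nilpotent there; the Nilpotence Theorem then makes $g_{i,j}$ itself nilpotent, and absorbing this nilpotence into a further iterate eliminates it. A cofibre-sequence chase, together with the evident closure under retracts, shows $\mathcal D$ is thick, and the case $F' = F$, $v = u$, $f = 1_F$ provides a nontrivial example, so $\mathcal D = \C_n$.

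The main obstacle is the existence of a $v_n$--self map on a single specific type-$n$ complex: the thick-subcategory propagation above is essentially formal, but producing even one concrete such map requires the Adams spectral sequence and vanishing-line analysis of \cite{hs}, built on the structure of $BP_*BP$ and the computations going back to Miller--Ravenel--Wilson. This step is the technical core of the Periodicity Theorem and cannot be bypassed by categorical arguments.
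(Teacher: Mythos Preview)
The paper does not prove this theorem: it is stated in the background section as a known result, attributed to Hopkins--Smith \cite{hs} and Ravenel \cite{ravenel}, with no argument given. There is therefore nothing in the paper to compare your proposal against.

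That said, your outline is broadly the Hopkins--Smith strategy (Nilpotence Theorem plus thick subcategory propagation, with Smith's construction of an explicit type-$n$ complex carrying a $v_n$--self map as the hard technical input), and you correctly flag that last step as the crux. Two places where your sketch is rough: your ``easy direction'' of (a) is garbled --- a nilpotent endomorphism of a nonzero $K(m)_*$--vector space is not by itself a contradiction, and the usual argument instead passes to the cofiber of a high iterate of $v$ and invokes the fact that for finite spectra $K(j)_*$--acyclicity implies $K(j-1)_*$--acyclicity; and for (b), the standard route first establishes asymptotic uniqueness and centrality of $v_n$--self maps in the graded ring $[F,F]_*$ for a single $F$, then deduces the two-object statement by applying this to $F \vee F'$, rather than running a thick-subcategory argument in the target variable.
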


Given $F \in \C$ of type $n$, we let $T(F)$ denote the mapping telescope of a $v_n$--self map.  An immediate consequence of the Periodicity Theorem is that $T(F)$ is independent of choice of self map.  Furthermore, one deduces that the Bousfield class of $T(F)$ is independent of the choice of type $n$ spectrum $F$.  In other words, If $F$ and $F^{\prime}$ are both of type $n$, then
$$ T(F) \sm Y \simeq * \text{ if and only if } T(F^{\prime}) \sm Y  \simeq *.$$
It is usual to let $T(n)$ ambiguously denote $T(F)$ for any particular type $n$ finite spectrum $F$.

Another consequence of the Periodicity Theorem was proved by the author in \cite{k1}.

\begin{prop}[{\cite[Cor.4.3]{k1}}] \label{resolution prop} There exists a diagram in $\C$,
\begin{equation*}
\xymatrix{
F(1) \ar[d] \ar[r]^{f(1)} & F(2) \ar[dl] \ar[r]^{f(2)} & F(3) \ar[dll] \ar[r] & \dots  \\
S^0 &&& }
\end{equation*}
such that each $F(k) \in \C_n$, and $\displaystyle \hocolim_k F(k) \ra S^0$ induces an $T(m)_*$--isomorphism for all $m \geq n$.
\end{prop}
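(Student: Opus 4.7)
I would argue by induction on $n$. For the base case $n=1$, set $F(k) = \Sigma^{-1}(S^0/p^k)$, with the map $F(k) \ra S^0$ coming from the cofiber sequence $F(k) \ra S^0 \xra{p^k} S^0$ and the transition map $F(k) \ra F(k+1)$ induced from the factorization $p^{k+1} = p \cdot p^k$. Each $F(k)$ lies in $\C_1$ since $S^0/p^k$ is of type $1$. A standard telescope computation identifies the cofiber of $\hocolim_k F(k) \ra S^0$ with $S^0[1/p]$. For $m \geq 1$, $T(m)$ is a module over a $p$--torsion type--$m$ finite spectrum, so $T(m) \sm S^0[1/p] \simeq *$, and the map is a $T(m)_*$--isomorphism.

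For the inductive step, assume the proposition for $n-1$, with diagram $\{F^{(n-1)}(k)\}$. By Periodicity Theorem~(a), each $F^{(n-1)}(k) \in \C_{n-1}$ admits a $v_{n-1}$--self map $v_k: \Sigma^{d_k} F^{(n-1)}(k) \ra F^{(n-1)}(k)$. By Periodicity Theorem~(b), after replacing each $v_k$ by a suitable iterate and passing to a cofinal subsequence of indices $k$, the $v_k$'s may be taken to commute with the transition maps $F^{(n-1)}(k) \ra F^{(n-1)}(k+1)$. Define $F^{(n)}(k) = \Sigma^{-d_k - 1}\mathrm{cof}(v_k)$, which lies in $\C_n$ because the cofiber of a $v_{n-1}$--self map of a type--$(n-1)$ spectrum is of type $n$ (by the long exact sequence in each $K(m)_*$). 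Compose the connecting map $F^{(n)}(k) \ra F^{(n-1)}(k)$ (from the desuspended cofiber sequence of $v_k$) with the existing $F^{(n-1)}(k) \ra S^0$ to define $F^{(n)}(k) \ra S^0$; transition maps come from naturality.

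To verify the hocolim condition, an octahedral argument identifies the cofiber of $\hocolim_k F^{(n)}(k) \ra \hocolim_k F^{(n-1)}(k)$, up to shift, with the telescope $\hocolim_k v_k^{-1} F^{(n-1)}(k)$, which has the Bousfield class of $T(n-1)$. For $m \geq n$ one has $T(m) \sm T(n-1) \simeq *$: by the Nilpotence Theorem it suffices that $1 \sm v_{n-1}$ act nilpotently on each $K(j)_*$ of the relevant smash product, and this holds because $K(j)_*(v_{n-1})$ is nilpotent for $j \neq n-1$, while $K(n-1)_*$ vanishes on type--$m$ spectra for $m \geq n$. Combined with the inductive hypothesis --- $\hocolim_k F^{(n-1)}(k) \ra S^0$ is a $T(m)_*$--iso for $m \geq n-1$, in particular for $m \geq n$ --- the long exact sequence in $T(m)_*$ yields the claim.

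The main obstacle is arranging compatibility of the chosen $v_{n-1}$--self maps with the pre-existing transition maps; this is exactly what Periodicity Theorem~(b) provides, at the cost of iterating self maps and refining to a cofinal subsequence, which requires some careful bookkeeping.
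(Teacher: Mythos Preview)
The paper does not actually prove this proposition: it is quoted from \cite[Cor.~4.3]{k1}, with only a remark that the argument there stays entirely in the triangulated homotopy category, and a pointer to alternative proofs (via writing $C_{n-1}^f S^0$ as a filtered hocolim of type~$n$ finite spectra) in \cite{bousfield3} and \cite{mahowaldsadofsky}. Your inductive strategy --- build the height~$n$ tower from the height~$(n{-}1)$ one by killing $v_{n-1}$ --- is precisely the approach of \cite{k1}, so at the level of ideas you are on the right track.

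There is, however, a genuine gap in the inductive step as you have written it. If each $F^{(n)}(k)$ is the desuspended cofiber of a \emph{single} application of $v_k$, and the transition maps on the $F^{(n)}(k)$ are merely those induced from the transition maps on the $F^{(n-1)}(k)$, then the cofiber of
\[
\hocolim_k F^{(n)}(k) \lra \hocolim_k F^{(n-1)}(k)
\]
is just $\Sigma^{-d}\hocolim_k F^{(n-1)}(k)$ (for a common period $d$), \emph{not} a $v_{n-1}$--telescope. By your own inductive hypothesis this cofiber is $T(m)_*$--equivalent to $\Sigma^{-d}S^0$ for all $m \geq n-1$, hence certainly not $T(m)_*$--acyclic, and the long exact sequence argument collapses. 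No octahedral manipulation will manufacture a telescope out of a single cofiber.

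What is missing is that one must take \emph{increasing} powers: set $F^{(n)}(k)$ to be the fiber of $v_k^{N_k}$ with $N_k \to \infty$, and arrange the transition maps so that the induced maps on cofibers $\Sigma^{-N_k d}F^{(n-1)}(k) \to \Sigma^{-N_{k+1} d}F^{(n-1)}(k+1)$ involve the extra factor $v^{\,N_{k+1}-N_k}$. Then the hocolim of the cofibers genuinely has the Bousfield class of $T(n-1)$, and your (correct) argument that $T(m) \sm T(n-1) \simeq *$ for $m \geq n$ completes the step. This is where the real bookkeeping lies: Periodicity Theorem~(b) must be invoked not just to make the $v_k$ commute with the pre-existing transition maps, but simultaneously to construct transition maps on the new tower that weave in these growing powers of $v$.
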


\begin{rem}  The statement of this proposition deserves some comment, as homotopy colimits of general diagrams in a triangulated category like $ho(\Sp)$ are not always defined.  However, the hocolimit of a sequence as above {\em is} defined (as the cofiber of an appropriate map between coproducts of the $F(k)$).  We note also that only this construction is used in the proof of the proposition given in \cite{k1}; in other words, the proposition is proved working solely in the triangulated homotopy category.
\end{rem}

We give standard names to some localization functors.  Let $L_n^f: \Sp \ra \Sp$ denote localization with respect to $T(0) \vee \dots \vee T(n)$, and then define functors $C_{n-1}^f, M_n^f: \Sp \ra \Sp$ by letting $C_{n-1}^fX$ be the homotopy fiber of $X \ra L_{n-1}^f X$ and $M_n^fX$ be the homotopy fiber of $L_n^fX \ra L_{n-1}^fX$.

These functors are all {\em smashing}, e.g. $X \sm L_n^f S^0 \simeq L_n^f X$ for all $X$, and from this one can quite easily deduce that $L_{T(n)}$ and $M_n^f$ determine each other.  More precisely, there are natural equivalences $L_{T(n)} M_n^f X \simeq L_{T(n)}X$ and $C_{n-1}^f L_{T(n)}X \simeq M_n^f L_{T(n)}X \simeq M_n^f X$

An alternative proof of \propref{resolution prop} occurs in \cite[proof of Thm. 12.1]{bousfield3}, where Bousfield notes that $C_{n-1}^fS^0$ can be written in the form $\displaystyle \hocolim_k F(k)$ with each $F(k) \in \C_n$.  This same result also was proved by Mahowald and Sadofsky in \cite[Proposition 3.8]{mahowaldsadofsky}.

We end this section with characterizations of spectra that are $T(n)$--local or in the image of $M_n^f$.

\begin{lem} \label{Tn local lem} Consider the following three properties that a spectrum $X$ might satisfy. \\

\noindent{(i)} $[F,X] = 0$ for all $F \in \C_{n+1}$. \\

\noindent{(ii)} $[Y,X] = 0$ whenever $F(n) \sm Y \simeq *$ for some type $n$ finite spectrum $F(n)$.  \\

\noindent{(iii)} $T(i)\sm X \simeq *$ for $0 \leq i \leq n-1$. \\

\noindent Properties (i) and (ii) hold if and only if $X$ is $T(n)$--local.  Properties (i) and (iii) hold if and only if $X \simeq M^f_nX$.
\end{lem}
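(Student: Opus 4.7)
The plan is to exploit the cofiber sequence $C_{n-1}^f S^0 \to S^0 \to L_{n-1}^f S^0$ from \propref{resolution prop} (with $C_{n-1}^f S^0 \simeq \hocolim_k F(k)$, $F(k) \in \C_n$), combined with the following core calculation under hypothesis (i): Spanier--Whitehead duality applied in all degrees yields $\pi_*(DF \sm X) = [F, X]_* = 0$ for $F \in \C_{n+1}$, so $F \sm X \simeq *$; then for any type-$n$ finite $F(k)$, the cofiber sequence $\Sigma^d F(k) \xra{v} F(k) \to F(k)/v$ (with $F(k)/v \in \C_{n+1}$) makes $v \sm 1$ an equivalence on $F(k) \sm X$, so $F(k) \sm X \simeq T(n) \sm X$ --- which is $T(n)$--local, since $L_{T(n)}$ is smashing. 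The forward directions of both equivalences are then direct: if $X$ is $T(n)$--local then (i) holds because the Thick Subcategory Theorem plus Nilpotence/Periodicity gives $T(n) \sm F \simeq *$ for $F \in \C_{n+1}$, and (ii) holds because $F(n) \sm Y \simeq *$ forces $T(n) \sm Y = v^{-1}(F(n) \sm Y) \simeq *$; while $X \simeq M_n^f X$ plainly yields (i) (via $T(n)$--locality) and (iii).

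For the non-trivial direction of the first equivalence, given a $T(n)$--acyclic $Y$ and (i)+(ii), smash the resolution with $Y$ to obtain $C_{n-1}^f Y \to Y \to L_{n-1}^f Y$. The long exact sequence reduces the problem to showing both ends vanish under $[-, X]$. For $[C_{n-1}^f Y, X]$: compute $[F(k) \sm Y, X] = [Y, DF(k) \sm X]$, which is $[Y, *] = 0$ when $F(k) \in \C_{n+1}$ and $[Y, T(n) \sm X] = 0$ when $F(k)$ has type $n$ (pitting $T(n)$--locality of $T(n) \sm X$ against $T(n)$--acyclicity of $Y$); the $\lim/\lim^1$ sequence in all degrees then kills the hocolim. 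For $[L_{n-1}^f Y, X]$: I show $F(n)$ is $L_{n-1}^f$--acyclic, i.e.\ $T(m) \sm F(n) \simeq *$ for $m < n$, by applying Nilpotence to $v_m \sm 1$ on $F(m) \sm F(n)$ (whose $K(m)$--homology vanishes because $F(n)$ has type $n$, forcing $v_m \sm 1$ to have nilpotent $K(j)$--action for every $j$); hence $F(n) \sm L_{n-1}^f Y \simeq Y \sm (F(n) \sm L_{n-1}^f S^0) \simeq *$, and (ii) applies.

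For the second equivalence, the cleanest route bypasses reproving (ii) separately: assuming (i) and (iii), the hypothesis $L_{n-1}^f X \simeq *$ gives $X \simeq C_{n-1}^f X \simeq \hocolim_k F(k) \sm X$, and by the core calculation each $F(k) \sm X$ is either $*$ or $\simeq T(n) \sm X$, hence $T(n)$--local. Since $L_{T(n)}$ is smashing, the $T(n)$--local subcategory is closed under hocolim, so $X$ itself is $T(n)$--local; combined with (iii), this forces $X \simeq M_n^f X$. The main technical care will lie in tracking suspensions when identifying $DF(k) \sm X$ with $T(n) \sm X$ up to shift --- handled cleanly by choosing $F(k)$ to be a generalized Moore spectrum, which is self-dual up to suspension --- and in verifying the $\lim^1$ vanishings, which follow by applying each case's argument simultaneously in all degrees.
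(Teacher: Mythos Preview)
Your argument rests on the assertion that $L_{T(n)}$ is smashing, invoked twice: once to conclude that $T(n)\sm X$ is $T(n)$--local (so that $[Y,DF(k)\sm X]=0$ for $T(n)$--acyclic $Y$), and once to conclude that the $T(n)$--local subcategory is closed under sequential homotopy colimits. This assertion is false for $n\geq 1$. Indeed, the compact $T(n)$--acyclics are exactly the objects of $\C_{n+1}$, so if $L_{T(n)}$ were smashing it would coincide with $L_n^f$; but $L_n^fS$ is not $T(n)$--local (for instance it has nontrivial $T(0)$--homology), so $L_{T(n)}\neq L_n^f$. A related slip: you write that ``$X\simeq M_n^fX$ plainly yields (i) via $T(n)$--locality,'' but $M_n^fX$ is \emph{not} $T(n)$--local in general---the paper records $M_n^fX\simeq C_{n-1}^fL_{T(n)}X$, which differs from $L_{T(n)}X$.

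Both of your detours are avoidable by reading conditions (i)--(iii) as locality/acyclicity statements. Condition (i) says exactly that $X$ is $L_n^f$--local, condition (ii) says $X$ is $F(n)$--local, and condition (iii) says $L_{n-1}^fX\simeq *$. For the second equivalence this finishes immediately: (i) gives $X\simeq L_n^fX$ and (iii) gives $M_n^fX=\hofib(L_n^fX\to L_{n-1}^fX)\simeq L_n^fX$, so $X\simeq M_n^fX$; no passage through $T(n)$--locality is needed. For the first equivalence, your ``core calculation'' $F(n)\sm X\simeq T(n)\sm X$ under (i) is exactly right, but the efficient way to use it is to observe that it yields $F(n)_*(X)\simeq T(n)_*(X)\simeq T(n)_*(L_{T(n)}X)\simeq F(n)_*(L_{T(n)}X)$; since under (ii) both $X$ and $L_{T(n)}X$ are $F(n)$--local, the $F(n)_*$--equivalence $X\to L_{T(n)}X$ is an equivalence. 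This replaces your resolution argument and the problematic $T(n)$--locality of $T(n)\sm X$ entirely.
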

\begin{proof} We can assume that $T(n)$ is the telescope of a $v_n$--self map $v:\Sigma^d F(n) \ra F(n)$.

Condition (i) is equivalent to the statement that $X$ is $L_n^f$--local, while property (ii) says that $X$ is $F(n)$--local.  Thus if $X$ is $T(n)$--local, both (i) and (ii) are true.

If condition (i) holds, so that $X$ is $L_n^f$--local,  we  observe that $v: F(n) \sm X \ra \Sigma^{-d} F(n) \sm X$ is an equivalence, as the cofiber is null, since it can be written (using $S$--duality) in the form $\MapS(F,X)$ with $F$ of type $n+1$.  It follows that $F(n) \sm X \simeq T(n) \sm X$, and thus
$$F(n)_*(X) \simeq T(n)_*(X) \simeq T(n)_*(L_{T(n)}X) \simeq F(n)_*(L_{T(n)}X).$$  Thus if condition (ii) also holds, so that $X$, as well as $L_{T(n)}X$, is $F(n)$--local, we conclude that $X \simeq L_{T(n)}X$, i.e. X is $T(n)$--local.

Finally, property (iii) says that $L_{n-1}^fX \simeq *$, so that $M_n^f X \simeq L_n^f X$.
\end{proof}

\section{Telescopic functors associated to a self map of a space} \label{phi_v section}

\subsection{The basic construction}

Given a space $B$ and a map $v:\Sigma^d B \ra B$ with $d>0$, we define a functor
$$ \Phi_v: \T \ra \Sp$$
as follows.  If $n\equiv -e \mod d$, with $0\leq e \leq d-1$, we let

$$ \Phi_v(Z)_n = \Omega^e \MapT(B,Z).$$
The structure maps $\Phi_v(Z)_{n} \ra \Omega \Phi_v(Z)_{n+1}$ identify with the identity unless $n\equiv 0 \mod d$, in which case it equals the map
$$ v(Z): \MapT(B,Z) \xra{v^*} \MapT(\Sigma^dB,Z) = \Omega^d\MapT(B,Z).$$

The construction is functorial in $v$ in the following sense: a commutative diagram
\begin{equation*}
\xymatrix{
 \Sigma^dA \ar[d]^u \ar[r]^{\Sigma^d f} & \Sigma^dB \ar[d]^v  \\
A \ar[r]^f & B }
\end{equation*}
induces a natural transformation $f^*:\Phi_v(Z) \ra \Phi_u(Z)$.

We list some basic properties of $\Phi_v(Z)$ in the next omnibus lemma.

\begin{lem} \label{big phi lemma}
\noindent{\bf (a)} $\pi_*(\Phi_v(Z)) = v^{-1}\pi_*(Z;B)$.\\

\noindent{\bf (b)} If a map of spaces $Y \ra Z$ induces an isomorphism on $\pi_*$ for $* \gg 0$, then $\Phi_v(Y) \ra \Phi_v(Z)$ is a stable equivalence.  In particular, the $r$--connected covering map $Z\langle r \rangle \ra Z$ induces a stable equivalence
$\Phi_v(Z\langle r \rangle) \ra \Phi_v(Z)$ for all $r$. \\

\noindent{\bf (c)} $v^*: \Phi_v(Z) \ra \Phi_{\Sigma^dv}(Z)$ is a stable equivalence. \\

\noindent{\bf (d)} For all spaces $A$, $\Phi_v(\MapT(A,Z)) = \Phi_{1_A \sm v}(Z) = \MapS(A,\Phi_v(Z))$.  In particular, $\Phi_{\Sigma^c v}(Z) = \Omega^c\Phi_v(Z)$ for all $c$. \\

\noindent{\bf (e)} $\Phi_v$ takes weak equivalences to level weak equivalences (and thus stable equivalences),
fibrations to level fibrations, and homotopy pullbacks to level homotopy pullbacks (and thus stable homotopy pullbacks). \\

\noindent{\bf (f)} Given a commutative diagram
\begin{equation*}
\xymatrix{
 \Sigma^dA \ar[d]^u \ar[r]^{\Sigma^d f} & \Sigma^dB \ar[d]^v \ar[r]^{\Sigma^d g} & \Sigma^dC \ar[d]^w  \\
A \ar[r]^f & B \ar[r]^g & C, }
\end{equation*}
if $A \xra{f} B \xra{g} C$ is a homotopy cofiber sequence of spaces, then the induced sequences
$$ \Phi_w(Z) \xra{g^*} \Phi_v(Z) \xra{f^*} \Phi_u(Z)$$
are homotopy fibration sequences of spectra for all $Z$. \\

\noindent{\bf (g)} If $B$ is a finite CW complex, there is a natural stable equivalence
$$\hocolim_d \Phi_v(Z_d) \simeq \Phi_v(\hocolim_d Z_d)$$
for all diagrams $Z_1 \ra Z_2 \ra Z_3 \ra \dots$ of spectra. \\

\noindent{\bf (h)} If $v_0,v_1: \Sigma^d B \ra B$ are homotopic maps, then $\Phi_{v_0}(Z)$ is naturally stably equivalent to $\Phi_{v_1}(Z)$. \\

\noindent{\bf (i)} There is a natural stable equivalence $\Phi_v(Z) \xra{\sim} \Phi_{v^r}(Z)$, where $v^r: \Sigma^{rd} B \ra B$ denotes the evident $r$--fold composition of $v$ with its various suspensions.
\end{lem}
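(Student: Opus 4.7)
The plan is to establish (a) directly from the definition, then leverage it together with standard properties of $\Map_{\T}(B,-)$ as a right adjoint to obtain the remaining items, leaving (b) as the one statement that requires a genuine connectivity argument.

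For (a), I unravel the definition. On the cofinal subsequence of levels $n=kd$ we have $\Phi_v(Z)_{kd}=\Map_{\T}(B,Z)$, and the composite of the $d$ successive structure maps from level $kd$ to level $(k+1)d$ is precisely $v^{*}:\Map_{\T}(B,Z)\to\Omega^d\Map_{\T}(B,Z)$, the intermediate structure maps being identities under the loop identifications. Taking $\pi_{s+n}$ and passing to the colimit yields $\pi_s(\Phi_v(Z))=\colim_k[B,Z]_{s+kd}=v^{-1}\pi_s(Z;B)$.

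Parts (d) and (e) are essentially formal. For (d), the smash/mapping-space adjunction gives levelwise identifications $\Map_{\T}(A\sm B,Z)\cong\Map_{\T}(B,\Map_{\T}(A,Z))\cong\Map_{\T}(A,\Map_{\T}(B,Z))$, and the structure maps are compatible because $1_A\sm v$ acts only on the $B$--factor. For (e), $\Map_{\T}(B,-)$ (for $B$ a CW complex) preserves weak equivalences, Serre fibrations, and pullbacks as a right adjoint, and the loop functor $\Omega^e$ does the same; the levelwise statements for $\Phi_v$ follow immediately. The remaining items (c), (f), (g), (h), (i) combine (a) and (e) with concrete constructions. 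For (c), the functoriality of $\Phi$ in $v$ applied with base map $v:\Sigma^d B\to B$ produces the natural transformation $v^{*}:\Phi_v(Z)\to\Phi_{\Sigma^d v}(Z)$; by (a) it realizes multiplication by $v$ on $v^{-1}\pi_*(Z;B)$, which is invertible. For (f), apply (e) to the fiber sequence $\Map_{\T}(C,Z)\to\Map_{\T}(B,Z)\to\Map_{\T}(A,Z)$ induced by the cofiber sequence, then loop levelwise. For (g), use that $\Map_{\T}(B,-)$ commutes with filtered colimits for finite CW $B$ while sequential homotopy colimits in $\Sp$ are computed levelwise. For (h), pick a basepoint-preserving homotopy $H:\Sigma^d B\sm I_+\to B$ and define $\tilde v:\Sigma^d(B\sm I_+)\to B\sm I_+$ by $\tilde v(\omega,t)=(H(\omega,t),t)$; the endpoint inclusions $i_0,i_1:B\to B\sm I_+$ give, via functoriality in $v$, a zigzag $\Phi_{v_0}(Z)\xla{i_0^{*}}\Phi_{\tilde v}(Z)\xra{i_1^{*}}\Phi_{v_1}(Z)$, with each leg a stable equivalence by (e) applied to the homotopy equivalences $i_0,i_1$. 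For (i), a natural map $\Phi_v(Z)\to\Phi_{v^r}(Z)$ is built levelwise using iterated powers $(v^j)^{*}$, and by (a) it is a $\pi_*$--isomorphism because $\{krd\}$ is cofinal in $\{kd\}$ in the telescope defining $v^{-1}\pi_*(Z;B)$.

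The main obstacle is (b), the only item requiring a connectivity-type argument rather than pure formalities. Let $F$ be the homotopy fiber of $Y\to Z$; the hypothesis supplies some $N$ with $\pi_n(F)=0$ for $n>N$, so $F$ is Postnikov-truncated. The long exact sequence of the fibration applied to $\Map_{\T}(\Sigma^{s+kd}B,-)$ shows that both $[\Sigma^{s+kd}B,F]$ and $[\Sigma^{s+kd+1}B,F]$ vanish once $\Sigma^{s+kd}B$ is more connected than $F$ is truncated, which happens for $k$ sufficiently large. Hence $[\Sigma^{s+kd}B,Y]\to[\Sigma^{s+kd}B,Z]$ is an isomorphism for $k\gg 0$, and passing to the colimit via (a) gives the stable equivalence $\Phi_v(Y)\to\Phi_v(Z)$. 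The connective-cover case is subsumed, since $Z\langle r\rangle\to Z$ has Postnikov-truncated fiber.
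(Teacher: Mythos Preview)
Your proof is correct and follows essentially the same approach as the paper's, which is even more terse (``easily verified'', ``check of homotopy groups'') and only spells out (h) and (i) in the same way you do. One tiny imprecision: in (h) you justify the equivalences $i_0^*, i_1^*$ by citing (e), but (e) concerns the $Z$--variable; the actual reason is simply that $i_0, i_1: B \to B \sm I_+$ are homotopy equivalences, so $\MapT(i_j, Z)$ is a levelwise weak equivalence.
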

\begin{proof}  All of this is quite easily verified.  Part (a) is clear, and then parts (b) and (c) follow by check of homotopy groups.  Part (d) follows by inspection, since $\MapT(A \sm B, Z) = \MapT(A,\MapT(B,Z))$.  Parts (e) and (f) follow from the fact that $\MapT(B,Z)$ takes cofibrations in the $B$-variable and fibrations in the $Z$--variable to fibrations.  Similarly, part (g) follows from the fact that $\displaystyle \hocolim_n \MapT(B,Z_n) \simeq \MapT(B, \hocolim_n Z_n)$ if $B$ is a finite complex.

To check part (h), suppose $v_0,v_1: \Sigma^d B \ra B$ are homotopic maps.  If $H: \Sigma^d B \sm I_+ \ra B$ is a homotopy from $v_0$ to $v_1$, let $V: \Sigma^d B \sm I_+ \ra B \sm I_+$ be defined by $V(x,t) = (H(x,t),t)$.  Then there is a commutative diagram
\begin{equation*}
\xymatrix{
\Sigma^d B \ar[d]^{v_0} \ar[r]^-{\Sigma^di_0} & \Sigma^d B\sm I_+ \ar[d]^{V} &  \Sigma^d B \ar[l]_-{\Sigma^di_1} \ar[d]^{v_1}  \\
B \ar[r]^-{i_0} & B \sm I_+ & B, \ar[l]_-{i_1} }
\end{equation*}
which induces natural equivalences
$$ \Phi_{v_0}(Z) \xleftarrow[\sim]{i_0^*} \Phi_{V}(Z) \xrightarrow[\sim]{i_1^*}\Phi_{v_1}(Z).$$

Finally the stable equivalence of part (i) is defined as follows. Write $n$ in the form $n = mrd -sd - e$, with $0\leq s \leq r-1$ and $0 \leq e \leq d-1$.  Then let
$$ \Phi_v(Z)_n \ra \Phi_{v^r}(Z)_n$$
be the map $$\displaystyle \Omega^e\MapT(B,Z) \xra{\Omega^e (v^{s})^*} \Omega^e \MapT(\Sigma^{sd} B, Z) = \Omega^{e+sd}\MapT(B,Z).$$
\end{proof}

\begin{cor} \label{phiv cor} $\Phi_v(Z)$ is a periodic spectrum with period $d$: $\Phi_v(Z) \simeq \Omega^d \Phi_v(Z)$.  Furthermore, the induced functor $\Phi_v: ho(\T) \ra ho(\Sp)$ is determined by the stable homotopy class of $v^r$ for any $r$.
\end{cor}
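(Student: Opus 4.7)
My plan is to stitch together parts of Lemma \ref{big phi lemma}. For the periodicity, parts (c) and (d) do all the work: part (c) furnishes a natural stable equivalence $v^*\colon \Phi_v(Z) \xra{\sim} \Phi_{\Sigma^d v}(Z)$, while the case $c=d$ of part (d) identifies $\Phi_{\Sigma^d v}(Z)$ with $\Omega^d \Phi_v(Z)$. Composing gives $\Phi_v(Z) \simeq \Omega^d \Phi_v(Z)$ in $ho(\Sp)$, natural in $Z$. Note also that the induced functor on homotopy categories is well defined, because part (e) says $\Phi_v$ sends weak equivalences of spaces to stable equivalences of spectra.

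For the second assertion, I would first show that $\Phi_v$ depends only on the stable homotopy class of $v$; the statement involving $v^r$ then follows by invoking part (i). Suppose $v, v'\colon \Sigma^d B \ra B$ satisfy $\Sigma^c v \simeq \Sigma^c v'$ for some $c \geq 0$. Applying part (h) to these two homotopic self maps $\Sigma^{c+d} B \ra \Sigma^c B$ yields a natural stable equivalence $\Phi_{\Sigma^c v}(Z) \simeq \Phi_{\Sigma^c v'}(Z)$, which part (d) rewrites as $\Omega^c \Phi_v(Z) \simeq \Omega^c \Phi_{v'}(Z)$. Because $\Omega\colon ho(\Sp) \ra ho(\Sp)$ is a self-equivalence (inverse $\Sigma$), one may desuspend to conclude $\Phi_v \simeq \Phi_{v'}$ as functors $ho(\T) \ra ho(\Sp)$. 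Finally, part (i) supplies natural stable equivalences $\Phi_v \simeq \Phi_{v^r}$ and $\Phi_{v'} \simeq \Phi_{(v')^r}$; so whenever $v^r$ and $(v')^r$ agree in the stable homotopy category, the argument just given (applied to the pair $v^r, (v')^r$) yields $\Phi_{v^r} \simeq \Phi_{(v')^r}$, and hence $\Phi_v \simeq \Phi_{v'}$.

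Since all the pieces have already been checked in Lemma \ref{big phi lemma}, there is no serious obstacle; the corollary is essentially a bookkeeping exercise. The one point meriting attention is the desuspension step: we use that $\Omega$ is a self-equivalence of $ho(\Sp)$, which rests on the fact recorded in \secref{model categories} that stable equivalences are detected on $\pi_*$, a functor which $\Omega$ merely shifts by one.
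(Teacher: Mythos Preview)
Your proof is correct and is exactly the paper's approach: the paper's proof reads in its entirety ``Combine properties (c), (d), (h), and (i) of the lemma,'' and you have simply spelled out how these combine. Your extra remark about desuspending via the invertibility of $\Omega$ on $ho(\Sp)$ is a fair point to flag, and your use of (e) to justify well-definedness on homotopy categories is a reasonable addition.
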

\begin{proof}  Combine properties (c), (d), (h), and (i) of the lemma.
\end{proof}

Our last property needs some notation. Given an unstable map $u: \Sigma^c A \ra A$ and $X \in \Sp$, let
$u^{-1}\MapS(A,X)$ denote the homotopy colimit of the diagram $$ \MapS(A,X) \xra{u^*} \MapS(\Sigma^{c}A,X) \xra{u^*} \MapS(\Sigma^{2c}A,X)\ra \dots.$$

\begin{lem} \label{smash lemma}
Given maps $u: \Sigma^c A \ra A$ and $v: \Sigma^d B \ra B$, there are natural stable equivalences $u^{-1}\MapS(A, \Phi_v(Z)) \simeq \Phi_{u\sm v}(Z) \simeq v^{-1}\MapS(B,\Phi_u(Z))$.
\end{lem}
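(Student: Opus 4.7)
By the symmetry exchanging $(u, A)$ with $(v, B)$, it suffices to produce a natural stable equivalence $u^{-1}\MapS(A, \Phi_v(Z)) \simeq \Phi_{u \sm v}(Z)$. The plan is to realise both spectra as homotopy colimits of a single bi-indexed diagram, then invoke cofinality of the diagonal.

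First I would use \lemref{big phi lemma}(d) to identify $\MapS(A, \Phi_v(Z))$ with $\Phi_{1_A \sm v}(Z)$; under this identification, the outer map $u^*$ that defines the telescope $u^{-1}\MapS(A, \Phi_v(Z))$ corresponds, via the functoriality of $\Phi$ in its subscript, to the natural transformation $(u \sm 1_B)^*: \Phi_{1_A \sm v}(Z) \to \Phi_{1_{\Sigma^c A} \sm v}(Z) = \Omega^c\Phi_{1_A \sm v}(Z)$ induced by $u \sm 1_B: \Sigma^c(A \sm B) \to A \sm B$. Next, form the $\N \times \N$-indexed diagram of spectra
$$W_{i,j} = \Omega^{ic + jd}\Phi_{1_A \sm v}(Z),$$
with horizontal transitions $W_{i,j} \to W_{i+1, j}$ given by $\Omega^{ic+jd}(u \sm 1)^*$ and vertical transitions $W_{i, j} \to W_{i, j+1}$ given by $\Omega^{ic+jd}(1 \sm v)^*$. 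Because the self-maps $u \sm 1$ and $1 \sm v$ of $A \sm B$ commute and compose to $u \sm v$, these transitions commute, and their diagonal composites are $(u \sm v)^*$.

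The vertical transitions are stable equivalences by \lemref{big phi lemma}(c), so $\hocolim_j W_{i,j} \simeq \Omega^{ic}\Phi_{1_A \sm v}(Z)$, and hence $\hocolim_i \hocolim_j W_{i,j} = u^{-1}\MapS(A, \Phi_v(Z))$ by definition. On the other hand, \corref{hocolim cor} applied to $\Phi_{u \sm v}(Z)$ with $d_k = k(c+d)$ identifies the diagonal hocolim $\hocolim_n W_{n,n}$ (along the diagonal transition $(u \sm v)^*$) with $\Phi_{u \sm v}(Z)$. Cofinality of the diagonal inclusion $\N \hookrightarrow \N \times \N$ then forces these two hocolims to agree, yielding the desired natural equivalence. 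The step requiring the most bookkeeping will be matching up the diagonal composite with the structure map $(u \sm v)^*$ of $\Phi_{u \sm v}$ functorially in $Z$; a reassuring sanity check is furnished by \lemref{big phi lemma}(a), which on stable homotopy groups reduces the claim to the tautology that the colimit of $\pi_{*+ic+jd}(Z; A \sm B)$ under the commuting actions of $u$ and $v$ may be computed equally as $(u \sm v)^{-1}$ along the diagonal or as $u^{-1}v^{-1}$ by iterating.
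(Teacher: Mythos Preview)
Your bi-indexed diagram and cofinality argument is a clean reorganization, but there is a real gap where you invoke \corref{hocolim cor}. That corollary identifies $\Phi_{u \sm v}(Z)$ with the hocolim of the shifted suspension spectra $s^{-k(c+d)}\Sinfty \MapT(A \sm B, Z)$, whereas your diagonal terms are $W_{n,n} = \Omega^{n(c+d)}\Phi_{1_A \sm v}(Z)$ --- each a full periodic spectrum, not a shifted suspension spectrum of a single space. So \corref{hocolim cor} does not speak about $\hocolim_n W_{n,n}$, and no natural map between that hocolim and $\Phi_{u \sm v}(Z)$ has actually been exhibited. Your homotopy-group sanity check confirms the two spectra have isomorphic $\pi_*$, but does not supply the comparison map.

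The paper's argument handles exactly this point by passing to shifted suspension spectra from the outset. It applies \lemref{hocolim lemma} to the linear tower $\Phi_{\Sigma^{kc}A \sm v}(Z) = \Omega^{kc}\Phi_{1_A\sm v}(Z)$ (your row $W_{k,0}$), replacing each term by $s^{-kd}\Sinfty\MapT(\Sigma^{kc}A \sm B, Z)$; it then uses the natural maps $\Sinfty\Omega^r W \to s^{-r}\Sinfty W$ to reach $\hocolim_k s^{-k(c+d)}\Sinfty\MapT(A \sm B, Z)$ (checking on $\pi_*$ that this comparison of towers is an equivalence), at which point \corref{hocolim cor} genuinely applies. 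If you prefer to keep the bi-indexed picture, the cleanest repair is to redefine $W_{i,j} = s^{-ic-jd}\Sinfty\MapT(A \sm B, Z)$: then \corref{hocolim cor} legitimately identifies the diagonal hocolim with $\Phi_{u \sm v}(Z)$ and each column hocolim $\hocolim_j W_{i,j}$ with $s^{-ic}\Phi_{1_A\sm v}(Z)$, so that the iterated hocolim is $u^{-1}\MapS(A, \Phi_v(Z))$ as desired.
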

\begin{proof}[Sketch proof]  By symmetry, we need just verify the first of these equivalences.
By \lemref{big phi lemma}(d), $u^{-1}\MapS(A, \Phi_v(Z))$ is equal to
$$ \hocolim \{ \Phi_{A \sm v}(Z) \xra{u^*} \Phi_{\Sigma^cA \sm v}(Z) \xra{u^*} \Phi_{\Sigma^{2c}A \sm v}(Z) \xra{u^*} \dots\}.$$
By \lemref{hocolim lemma}, this is stably equivalent to
$$ \hocolim_k s^{-kd}\Sinfty \MapT(\Sigma^{kc}A \sm B, Z).$$
This, in turn, maps to
$$ \hocolim_k s^{-k(c+d)}\Sinfty \MapT(A \sm B, Z),$$
using evident natural maps of the form $\Sinfty \Omega^r W \ra s^{-r} \Sinfty W$, and a check of homotopy groups shows this map between homotopy colimits is an equivalence. Finally, by \lemref{hocolim lemma} again, this last homotopy colimit is equivalent to $\Phi_{u\sm v}(Z)$.
\end{proof}

\subsection{Identifying $\Phi_v(\Oinfty Z)$.}

Recall that, for $X \in \Sp$, $\Oinfty X = X_0$.  The following elementary `swindle' is critical to our arguments.  Note that it says that the functor that assigns $v^{-1}\MapS(B,X)$ to a spectrum $X$ depends only on the space $X_0$.

\begin{prop}[Compare with {\cite[Prop.3.3(4)]{k1}}] \label{phi omega prop}  If $X \in \Sp$ is fibrant (i.e. is an $\Omega$--spectrum), then, given $v: \Sigma^d B \ra B$, there is a natural weak equivalence
$$ \Phi_v(\Oinfty X) \simeq v^{-1}\MapS(B,X).$$
\end{prop}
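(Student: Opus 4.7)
The plan is to exhibit both sides as naturally weakly equivalent to the common sequential homotopy colimit
$$\hocolim_k \, s^{-kd}\Sinfty \Map_{\T}(B, X_0),$$
using \corref{hocolim cor} for the left-hand side and \lemref{hocolim lemma} for the right-hand side, with the $\Omega$-spectrum hypothesis on $X$ providing the bridge between the two homotopy colimits.

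First, I apply \corref{hocolim cor} to the spectrum $Y := \Phi_v(\Oinfty X)$ at the subsequence of levels $d_k = kd$. By the definition of $\Phi_v$, one has $Y_{kd} = \Map_{\T}(B, X_0)$ for every $k \ge 0$, and the only nontrivial structure maps in the block from level $kd$ to level $(k+1)d$ are the $v^*$'s: consequently the iterated adjoint structure map $Y_{kd} \to \Omega^d Y_{(k+1)d}$ is exactly $v^* \colon \Map_{\T}(B,X_0) \to \Omega^d \Map_{\T}(B,X_0)$. The corollary thus supplies a natural weak equivalence
$$\hocolim_k \, s^{-kd}\Sinfty \Map_{\T}(B, X_0) \xra{\sim} \Phi_v(\Oinfty X),$$
with telescope transitions encoding $v^*$ under the $\Sigma^d \dashv \Omega^d$ adjunction.

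Next, I apply \lemref{hocolim lemma} to the sequence of spectra $X(k) := \Map_{\Sp}(\Sigma^{kd}B, X)$, with transition maps induced by $v$, and the same $d_k = kd$. This yields a natural weak equivalence
$$\hocolim_k \, s^{-kd}\Sinfty \Map_{\Sp}(\Sigma^{kd}B, X)_{kd} \xra{\sim} \hocolim_k \Map_{\Sp}(\Sigma^{kd}B, X) = v^{-1}\Map_{\Sp}(B, X).$$
Since $X$ is an $\Omega$-spectrum, each $\Map_{\Sp}(\Sigma^{kd}B, X)$ is too, and $\Map_{\Sp}(\Sigma^{kd}B, X)_{kd} = \Omega^{kd}\Map_{\T}(B, X_{kd})$. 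The iterated $\Omega$-spectrum equivalence $X_0 \xra{\sim} \Omega^{kd} X_{kd}$ then induces a natural weak equivalence
$$\Map_{\T}(B, X_0) \xra{\sim} \Omega^{kd}\Map_{\T}(B, X_{kd}) = \Map_{\Sp}(\Sigma^{kd}B, X)_{kd},$$
identifying the source of the second displayed equivalence with the common homotopy colimit introduced above. Composing the resulting zig-zag gives the desired natural weak equivalence $\Phi_v(\Oinfty X) \simeq v^{-1}\Map_{\Sp}(B, X)$.

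The main obstacle is checking that the telescope transitions agree under the $\Omega$-spectrum identifications, that is, that precomposition with $v$ on the $\Map_{\Sp}$-side corresponds, via $\Map_{\T}(B, X_0) \simeq \Omega^{kd}\Map_{\T}(B, X_{kd})$, to the $v^*$-transitions coming from the definition of $\Phi_v$. This is a direct but notationally heavy diagram chase through the $\Sigma^d \dashv \Omega^d$ and $s^{-d}\Sinfty \dashv \mathrm{ev}_d$ adjunctions, together with the naturality of the $\Omega$-spectrum structure maps. If one prefers to sidestep the bookkeeping, one can check stable equivalence directly on $\pi_*$: \lemref{big phi lemma}(a) gives $\pi_*\Phi_v(\Oinfty X) = v^{-1}\pi_*(X_0; B)$, while the standard identification $\pi_n \Map_{\Sp}(Z, X) = [\Sigma^n Z, X_0]_{\T}$ for $\Omega$-spectra $X$ shows $\pi_n v^{-1}\Map_{\Sp}(B,X) = \colim_k[\Sigma^{n+kd}B, X_0]_{\T}$, and the constructed map induces the identity of these colimits.
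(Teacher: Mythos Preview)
Your proof is correct and follows essentially the same route as the paper: both arguments use \corref{hocolim cor} to express $\Phi_v(\Oinfty X)$ as $\hocolim_k s^{-kd}\Sinfty \MapT(B,X_0)$, use \lemref{hocolim lemma} to express $v^{-1}\MapS(B,X)$ as $\hocolim_k s^{-kd}\Sinfty \MapS(\Sigma^{kd}B,X)_{kd}$, and bridge the two via the $\Omega$--spectrum identification $\MapT(B,X_0)\xra{\sim}\MapT(B,\Omega^{kd}X_{kd})$. Your additional remarks on the compatibility of transitions and the $\pi_*$ check are more explicit than the paper's treatment but do not alter the strategy.
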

\begin{proof}  We have natural equivalences:
\begin{equation*}
\begin{split}
v^{-1}\MapS(B,X) &
\xleftarrow{\sim} \hocolim_r s^{-rd} \Sinfty \MapS(\Sigma^{rd}B,X)_{rd} \\
  & = \hocolim_r s^{-rd} \Sinfty \MapT(\Sigma^{rd}B,X_{rd}) \\
  & = \hocolim_r s^{-rd} \Sinfty \MapT(B,\Omega^{rd}X_{rd}) \\
    & \xleftarrow{\sim} \hocolim_r s^{-rd} \Sinfty \MapT(B,X_0) \\
    & \xra{\sim} \Phi_v(\Oinfty X).
\end{split}
\end{equation*}
Here the first equivalence follows from \lemref{hocolim lemma}, the last equivalence similarly follows from \corref{hocolim cor}, and the second to last equivalence holds because $X$ is fibrant.
\end{proof}

\begin{rem}  It is not easy to spot the analogue of this proposition in \cite{bousfield3}, but \cite[Thm.11.9]{bousfield3} is a more elaborate result of this type, and its proof, given in \cite[\S\S  11.10--11.11]{bousfield3} uses arguments very similar to our proof of \lemref{hocolim lemma}.
\end{rem}

\begin{rem} Using the proposition, we can give an alternative proof of part of \lemref{smash lemma}: that $u^{-1}\MapS(A, \Phi_v(Z)) \simeq v^{-1}\MapS(B,\Phi_u(Z))$. If we let $\Phi_u^{fib}(Z)$ be a fibrant replacement for $\Phi_u(Z)$, then $\displaystyle \Oinfty \Phi_u^{fib}(Z) \simeq \hocolim_r \MapT(\Sigma^{rc}A,Z)$.  Thus
\begin{equation*}
\begin{split}
v^{-1}\MapS(B,\Phi_u(Z)) &
\simeq \Phi_v(\hocolim_r \MapT(\Sigma^{rc}A,Z)) \text{ (by the proposition)} \\
  & \simeq \hocolim_r \Phi_v(\MapT(\Sigma^{rc}A,Z)) \\
  & = \hocolim_r \MapS(\Sigma^{rc}A,\Phi_v(Z)) = u^{-1}\MapS(A, \Phi_v(Z)).
\end{split}
\end{equation*}
\end{rem}

\section{$\Phi_v$ when $v$ is a $v_n$--self map of a space} \label{phi v:part 2}

Note that if $v: \Sigma^d B\ra B$ is nilpotent, $\Phi_v(Z)$ will be contractible for all $Z$.  So that this might not be the case, in this section, we study the case when $v$ is a $v_n$--self map of a finite CW complex $B$ of type $n$.

First we discuss a construction in the homotopy category of spectra.

Given $F \in \C_{n}$, it is convenient to let $\Phi(F,Z) \in ho(\Sp)$ denote $\Sigma^t\Phi_u(Z)$, where $u: \Sigma^c A \ra A$ is an unstable $v_n$--map of a finite CW complex $A$ such that $\Sigma^tF \simeq \Sinfty A$.  Similarly, given a map $f: F \ra F^{\prime}$ between finite spectra in $\C_n$, we define $f^*: \Phi(F^{\prime},Z) \ra \Phi(F,Z)$ to be $\Sigma^t\alpha^*: \Sigma^t \Phi_v(Z) \ra \Sigma^t \Phi_u(Z)$ where
\begin{equation*}
\xymatrix{
 \Sigma^dA \ar[d]^u \ar[r]^{\Sigma^d \alpha} & \Sigma^dB \ar[d]^v  \\
A \ar[r]^{\alpha} & B }
\end{equation*}
is a commutative diagram of spaces with $v_n$--self maps, and $\Sigma^tf \simeq \Sinfty \alpha$.

\begin{lem} \label{phi functor lem} $\Phi: \C_n^{op} \times ho(\T) \ra ho(\Sp)$ is a well defined functor and satisfies the next two properties. \\

\noindent{\bf (a)} $\Phi$ takes cofibration sequences in the $\C_n$-variable to fibration sequences in $ho(\Sp)$. \\

\noindent{\bf (b)} $\MapS(F,\Phi(F^{\prime},Z)) \simeq \Phi(F^{\prime} \sm F,Z) \simeq \MapS(F^{\prime},\Phi(F,Z)).$
\end{lem}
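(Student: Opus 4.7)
The plan is to exploit the Periodicity Theorem to reduce everything to the properties of $\Phi_v$ established in \lemref{big phi lemma} and \lemref{smash lemma}.

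I would first verify that $\Phi(F,Z)$ and the induced morphism $f^*$ depend only on $F$ and $f$ up to weak equivalence. Given two unstable realizations $(A,u)$ and $(A',u')$ of $F$, meaning $\Sigma^tF \simeq \Sinfty A$ and $\Sigma^{t'}F \simeq \Sinfty A'$ carrying $v_n$--self maps, the Periodicity Theorem (b) applied to the stable equivalence identifying the relevant suspensions of $\Sinfty A$ and $\Sinfty A'$ produces integers $i,j$ and a commuting square relating $u^i$ and $(u')^j$. Since a stable map between finite complexes is represented unstably after sufficient suspension, this square lifts to an unstable commutative diagram, and \lemref{big phi lemma}(c),(h),(i) combine to give $\Sigma^t\Phi_u(Z) \simeq \Sigma^{t'}\Phi_{u'}(Z)$. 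The same argument, applied to a stable map $f:F\ra F'$ with chosen unstable realizations of source and target, shows that $f^*$ is well-defined and respects composition.

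For part (a), starting from a cofiber sequence $F \xra{f} F' \xra{g} F''$ in $\C_n$, I would choose unstable realizations with compatible $v_n$--self maps $u$ on $A$ and $v$ on $B$ via the above procedure. The cofiber $C$ of the unstable map $\alpha:A \ra B$ realizing $f$ lies in $\C_n$ by the Thick Subcategory Theorem, and the naturally induced self-map $w$ on $C$ becomes a $v_n$--self map after replacing $u,v,w$ by common iterates (so as to kill $K(m)_*$--homology for $m\neq n$ simultaneously on all three objects). Then \lemref{big phi lemma}(f) produces the desired fibration sequence, which descends to the required statement in $ho(\Sp)$.

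For part (b), write $\Phi(F,Z) = \Sigma^t\Phi_u(Z)$ and $\Phi(F',Z) = \Sigma^{t'}\Phi_v(Z)$. Since $u \sm v$ is a $v_n$--self map of the type $n$ complex $A \sm B$, which realizes $\Sigma^{t+t'}(F \sm F')$, one has $\Phi(F \sm F',Z) \simeq \Sigma^{t+t'}\Phi_{u \sm v}(Z)$, and \lemref{smash lemma} identifies the latter with $\Sigma^{t+t'}u^{-1}\MapS(A,\Phi_v(Z))$. It then suffices to show the natural map $\MapS(A,\Phi_v(Z)) \ra u^{-1}\MapS(A,\Phi_v(Z))$ is an equivalence. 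By \lemref{big phi lemma}(d) the source equals $\Phi_{1_A\sm v}(Z)$, whose homotopy is $(1_A\sm v)^{-1}\pi_*(Z;A\sm B)$; under this identification the self-map $u^*$ driving the telescope corresponds to the endomorphism induced by $u\sm 1_B$. Since $1_A\sm v$ and $u\sm 1_B$ are both $v_n$--self maps of $A \sm B$, the Periodicity Theorem gives them compatible iterates, so $u\sm 1_B$ acts invertibly on this already $(1_A\sm v)$--periodic homotopy. Hence the telescoping map is a $\pi_*$--isomorphism. The symmetric identification with $\MapS(F',\Phi(F,Z))$ follows by swapping the roles of $(A,u)$ and $(B,v)$.

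The main obstacle will be the bookkeeping in the well-definedness arguments: one must keep careful track of the suspensions and iterations needed to bridge stable statements (from the Periodicity Theorem) and unstable realizations (needed to feed into \lemref{big phi lemma}), and verify that the resulting equivalences assemble naturally rather than merely instance-by-instance.
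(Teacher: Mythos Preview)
Your proposal is correct and matches the paper's approach exactly: the paper's proof is the single sentence ``This follows from the Periodicity Theorem and the results in the last section,'' and your argument is a faithful expansion of precisely that, invoking the Periodicity Theorem together with \lemref{big phi lemma} and \lemref{smash lemma}. One small quibble: the cofiber $C$ lies in $\C_n$ simply because $\C_n$ is closed under cofibers, not by the Thick Subcategory Theorem itself.
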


\begin{proof}  This follows from the Periodicity Theorem and the results in the last section.
\end{proof}

Now we prove that, when $v$ is a $v_n$--self map, $\Phi_v: \T \ra \Sp$ satisfies versions of the properties listed in \thmref{main thm}.

\begin{thm} \label{v Tn thm}  Let $v: \Sigma^d B\ra B$ is an unstable $v_n$--self map. \\

\noindent{\bf (1)} $\Phi_v(Z) \simeq M_n^f\Phi_v(Z)$ and is also $T(n)$--local, for all spaces $Z$. \\

\noindent{\bf (2)} $\Phi_v(\Oinfty X) \simeq \MapS(B,L_{T(n)} X)$ for all fibrant $X \in \Sp$.
\end{thm}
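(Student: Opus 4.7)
My plan is to deduce both parts from \lemref{smash lemma}, \lemref{phi functor lem}, and \propref{phi omega prop}, using the Nilpotence Theorem to collapse several auxiliary $\Phi_w(Z)$'s to a point and invoking the chromatic identities $M_n^f L_{T(n)}X\simeq M_n^f X$ and $T(n)\sm T(i)\simeq *$ for $i\neq n$. The main obstacle will be upgrading $M_n^f$-locality to $T(n)$-locality in part (1): this is not a formal consequence, and I will handle it by the further observation that $L_{T(n)}\Phi_v(Z)$ is itself $M_n^f$-local.

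For part (1), I first verify conditions (i) and (iii) of \lemref{Tn local lem}. Let $F'$ be an appropriate desuspension of $\Sinfty B$, so that $\Phi_v(Z)\simeq \Phi(F',Z)$. For (i), any $F\in\C_{n+1}$ gives $\MapS(F,\Phi_v(Z))\simeq\Phi(F'\sm F,Z)$ by \lemref{phi functor lem}(b); since $F'\sm F\in\C_{n+1}$, every $v_n$-self map on it has $K(m)_*$ nilpotent for all $m$ (trivially when $K(m)_*(F'\sm F)=0$, and by the $v_n$-property otherwise), so the Nilpotence Theorem makes it stably null and $\Phi(F'\sm F,Z)\simeq *$. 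For (iii), given a $v_i$-self map $u:\Sigma^cA\to A$ on a type $i$ finite complex with $i<n$, \lemref{smash lemma} identifies $u^{-1}\MapS(A,\Phi_v(Z))$ with $\Phi_{u\sm v}(Z)$; the mixed map $u\sm v$ on $A\sm B$ has $K(m)_*$ nilpotent for every $m$ because $K(i)_*(B)=0=K(n)_*(A)$ kill the two otherwise nonnilpotent factors, so $\Phi_{u\sm v}(Z)\simeq *$. Since $u^{-1}\MapS(A,\Phi_v(Z))$ can also be written stably as $T(DA)\sm\Phi_v(Z)$ with $T(DA)$ Bousfield equivalent to $T(i)$, this gives $T(i)\sm\Phi_v(Z)\simeq *$. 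Together (i) and (iii) yield $\Phi_v(Z)\simeq M_n^f\Phi_v(Z)$. To upgrade to $T(n)$-locality, I verify that $L_{T(n)}\Phi_v(Z)$ is itself $M_n^f$-local: it is $L_n^f$-local because $T(n)$-local spectra satisfy (i), and since the same Nilpotence argument applied to $F(n)\sm F(i)$ gives $T(n)\sm T(i)\simeq *$ for $i<n$, smashing yields $T(i)\sm L_{T(n)}\Phi_v(Z)\simeq L_{T(n)}(T(i))\sm\Phi_v(Z)\simeq *$. Invoking $M_n^f L_{T(n)}X\simeq M_n^f X$ from the preamble to \lemref{Tn local lem} then gives
\[L_{T(n)}\Phi_v(Z)\simeq M_n^f L_{T(n)}\Phi_v(Z)\simeq M_n^f\Phi_v(Z)\simeq\Phi_v(Z).\]

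For part (2), I start from \propref{phi omega prop}, which identifies $\Phi_v(\Oinfty X)\simeq v^{-1}\MapS(B,X)$ for fibrant $X$, and factor the comparison with $\MapS(B,L_{T(n)}X)$ through the localization $X\to L_{T(n)}X$. On the target side, the cofiber of $v:\Sinfty\Sigma^dB\to\Sinfty B$ lies in $\C_{n+1}$, so $\MapS(\mathrm{cof}(v),L_{T(n)}X)\simeq *$ as $L_{T(n)}X$ is $L_n^f$-local by \lemref{Tn local lem}; hence $v$ acts invertibly and $v^{-1}\MapS(B,L_{T(n)}X)\simeq\MapS(B,L_{T(n)}X)$. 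On the source side, the fiber of $X\to L_{T(n)}X$ is $T(n)$-acyclic, and for finite $B$ the rewrite $v^{-1}\MapS(B,-)\simeq T(DB)\sm(-)$ together with the Bousfield equivalence $T(DB)\sim T(n)$ kills the fiber. Combining, $v^{-1}\MapS(B,X)\simeq v^{-1}\MapS(B,L_{T(n)}X)\simeq\MapS(B,L_{T(n)}X)$, as required.
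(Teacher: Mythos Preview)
Your proof of part (2) is correct and essentially matches the paper's argument, and your verifications of conditions (i) and (iii) of \lemref{Tn local lem} in part (1) are also correct and parallel the paper's.

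The gap lies in your upgrade from $\Phi_v(Z)\simeq M_n^f\Phi_v(Z)$ to $T(n)$-locality. The asserted equivalence
\[T(i)\sm L_{T(n)}\Phi_v(Z)\;\simeq\; L_{T(n)}(T(i))\sm\Phi_v(Z)\]
would require $L_{T(n)}$ to be a smashing localization, which it is not for $n\geq 1$. More to the point, $T(n)$-local spectra need not be $T(i)$-acyclic for $i<n$: already $L_{T(1)}S^0=L_{K(1)}S^0$ has $\pi_0\cong\Z_p$, hence nontrivial rationalization, so $T(0)\sm L_{T(1)}S^0\not\simeq *$. Thus the first step $L_{T(n)}\Phi_v(Z)\simeq M_n^f L_{T(n)}\Phi_v(Z)$ of your chain is unjustified. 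In fact the implication you are hoping for fails outright: $M_n^fS^0$ satisfies (i) and (iii) of \lemref{Tn local lem} but is not $T(n)$-local, since by the paper's identity $M_n^fS^0\simeq C_{n-1}^fL_{T(n)}S^0$ and the cofiber $L_{n-1}^fL_{T(n)}S^0$ of the map $M_n^fS^0\to L_{T(n)}S^0$ is nonzero by the example just given.

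The paper fills this gap not by an indirect route but by directly verifying condition (ii) of \lemref{Tn local lem}, namely that $[Y,\Phi_v(Z)]=0$ whenever $F(n)\sm Y\simeq *$, via the Thick Subcategory Theorem. One checks that the collection $\C_Y=\{F\in\C_n : [Y,\Phi(F,Z)]_*=0\text{ for all }Z\}$ is thick (by \lemref{phi functor lem}(a)) and contains every type $n$ complex of the form $F(n)\sm F$ (using \lemref{phi functor lem}(b) to rewrite $[Y,\Phi(F(n)\sm F,Z)]_*$ as $[Y\sm F(n),\Phi(F,Z)]_*=0$), hence $\C_Y=\C_n$. Conditions (i) and (ii) together then yield $T(n)$-locality by \lemref{Tn local lem}. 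You cannot bypass condition (ii) here; it is doing real work.
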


\begin{proof}[Proof of \thmref{v Tn thm}(1)]
We need to verify that $\Phi_v(Z)$ satisfies the three properties listed in \lemref{Tn local lem}.

Property (i) says that $[F, \Phi_v(Z)] = 0$ for all $F \in \C_{n+1}$.  To see this, we first note that, since $\Phi_v(Z)$ is periodic, we can assume that $F = \Sinfty A$ for some finite CW complex $A$ of type at least $n+1$.  But then $[F, \Phi_v(Z)] = \pi_0(\MapS(A,\Phi_v(Z)) = 0$, because
$$ \MapS(A,\Phi_v(Z)) = \Phi_{1_A \sm v}(Z) \simeq *,$$
as $A \sm B$ will have type greater than $n$, so that $1_A \sm v: \Sigma^d A \sm B \ra A \sm B$ will be nilpotent.

Property (ii) says that, with $F(n)$ a fixed finite spectrum of type $n$, $[Y,\Phi_v(Z)] = 0$ whenever $F(n) \sm Y \simeq *$.  To prove this, we make use of the properties of the functor $\Phi$ listed in \lemref{phi functor lem}.

So suppose that $F(n) \sm Y \simeq *$.  Let $$\C_Y = \{ F \in \C_n \ | \ [Y,\Phi(F,Z)]_* = 0 \text{ for all } Z\}.$$
Using the Thick Subcategory Theorem, we check that $\C_Y = \C_n$, thus verifying property (ii). Firstly, $\C_Y$ is a thick subcategory by \lemref{phi functor lem}(a).  Secondly, it contains at least one type $n$ complex, as it contains all type $n$ complexes of the form $F(n) \sm F$, with $F$ of type $n$.  To see this, using \lemref{phi functor lem}(b), we have
$$ [Y,\Phi(F(n) \sm F,Z)]_* = [Y, \MapS(F(n),\Phi(F,Z)]_* = [Y \sm F(n),\Phi(F,Z)]_* = 0.$$

Property (iii) says that $T(i) \sm \Phi_v(Z) \simeq *$ for $i \leq n-1$.  We can assume that $T(i)$ is the telescope of the $S$--dual of an unstable $v(i)$--map $u: \Sigma^c A \ra A$, where $A$ is a finite CW complex of type $i$.  Then
\begin{equation*}
\begin{split}
T(i) \sm \Phi_v(Z) &  \simeq u^{-1}\MapS(A,\Phi_v(Z)) \\
  & \simeq v^{-1}\MapS(B,\Phi_u(Z)) \text{ (by \lemref{smash lemma})} \\
  & = \hocolim_r \Omega^{rd} \Phi_{u\sm 1_B}(Z)) \\
  & \simeq *,
\end{split}
\end{equation*}
as $A \sm B$ has type greater than $i$, so that $u \sm 1_B: \Sigma^c A \sm B \ra A \sm B$ is nilpotent, and thus $\Phi_{u\sm 1_B}(Z) \simeq *$.
\end{proof}

\begin{proof}[Proof of \thmref{v Tn thm}(2)]  This is similar to the author's proof of \cite[Prop. 3.4]{k1}. Thanks to \propref{phi omega prop}, we just need to show that, if $v: \Sigma^d B \ra B$ is a $v_n$--map, then there is a weak equivalence $$v^{-1}\MapS(B,X) \simeq \MapS(B,L_{T(n)}X).$$

This is easy to do.  We claim that each of the maps
$$ v^{-1}\MapS(B,X) \ra v^{-1}\MapS(B,L_{T(n)}X) \la \MapS(B,L_{T(n)}X)$$
 is an equivalence.  If we let $T(n)$ be modeled by the telescope of the dual of $v$, then the first map identifies with the equivalence $T(n) \sm X \xra{\sim} T(n) \sm L_{T(n)}X$. The second map is an equivalence as $v$ is a $T(n)_*$--isomorphism, so that each map in the diagram $$\MapS(B,L_{T(n)}) \xra{v^*} \MapS(\Sigma^d B, L_{T(n)})\xra{v^*} \MapS(\Sigma^{2d} B, L_{T(n)}) \ra \dots$$ is an equivalence.
 \end{proof}

\section{The construction of $\Phi_n$ and the proof of \thmref{main thm}} \label{phi_n section}

\subsection{The construction on the level of homotopy categories}

Recall that we have a functor
$$\Phi: \C_n^{op} \times ho(\T) \ra ho(\Sp)$$
defined by letting $\Phi(F,Z)$ denote $\Omega^t\Phi_u(Z)$, where $u: \Sigma^c A \ra A$ is an unstable $v_n$--map of a finite CW complex $A$ such that $\Sigma^tF \simeq \Sinfty A$.

Now consider a resolution of $S^0$ as in \propref{resolution prop}: a diagram
\begin{equation} \label{ho resolution}
\xymatrix{
F(1) \ar[d]^(.38){q(1)} \ar[r]^{f(1)} & F(2) \ar[dl]^(.2){q(2)} \ar[r]^{f(2)} & F(3) \ar[dll]^(.2){q(3)} \ar[r] & \dots  \\
S^0 &&& }
\end{equation}
such that each $F(k) \in \C_n$, and such that the map
$$\displaystyle q = \hocolim_k q(k):\hocolim_k F(k) \ra S^0$$
induces an isomorphism in $T(m)_*$ for all $m \geq n$.

\begin{defn}  Define $\Phi_n^T: ho(\T) \ra ho(\Sp)$ by the formula
$$ \Phi^T_n(Z) = \holim_k \Phi(F(k),Z)$$
\end{defn}

We have the following theorem, which is \thmref{main thm} on the level of homotopy categories.

\begin{thm} \label{ho thm} $\Phi_n^T$ satisfies the following properties. \\

\noindent{\bf (1)} $\Phi_n^T(Z)$ is $T(n)_*$--local for all $Z \in ho(\T)$. \\

\noindent{\bf (2)} $\MapS(F,\Phi_n^T(Z)) \simeq \Phi(F,Z) \in ho(\Sp)$ for all $F \in \C$ and  $Z\in ho(\T)$. \\

\noindent{\bf (3)} \ $\Phi_n^T(\Oinfty X) \simeq L_{T(n)}X$, for all $X \in ho(\Sp)$.
\end{thm}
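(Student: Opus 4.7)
The plan is to verify the three properties in sequence, using \thmref{v Tn thm} applied to the $v_n$-self maps underlying each $F(k) \in \C_n$, together with the resolution of $S^0$ from \propref{resolution prop}.

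Property (1) is immediate: each $\Phi(F(k), Z)$ is $T(n)$-local by \thmref{v Tn thm}(1), and the class of $T(n)$-local spectra is closed under homotopy limits (Bousfield localization is reflective).

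For property (3), let $X$ be a fibrant $\Omega$-spectrum. Writing each $F(k) = \Sigma^{-t_k} \Sinfty B_k$ with associated $v_n$-self map $v_k \colon \Sigma^{d_k} B_k \to B_k$, \thmref{v Tn thm}(2) gives $\Phi_{v_k}(\Oinfty X) \simeq \Map_{\Sp}(B_k, L_{T(n)} X)$, which unwinds to $\Phi(F(k), \Oinfty X) \simeq \Map_{\Sp}(F(k), L_{T(n)} X)$. Taking $\holim_k$ yields
$$\Phi^T_n(\Oinfty X) \simeq \holim_k \Map_{\Sp}(F(k), L_{T(n)} X) \simeq \Map_{\Sp}(\hocolim_k F(k), L_{T(n)} X).$$
By \propref{resolution prop} the cofiber of $\hocolim_k F(k) \to S^0$ is $T(n)_*$-acyclic, and $T(n)$-locality of $L_{T(n)} X$ kills its contribution, leaving $\Map_{\Sp}(S^0, L_{T(n)} X) \simeq L_{T(n)} X$.

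For property (2), I would exploit the symmetry built into \lemref{phi functor lem}(b). For $F \in \C_n$, unwinding the definition of $\Phi^T_n(Z)$ and applying \lemref{phi functor lem}(b) once gives
$$\Map_{\Sp}(F, \Phi^T_n(Z)) \simeq \holim_k \Map_{\Sp}(F, \Phi(F(k), Z)) \simeq \holim_k \Phi(F(k) \wedge F, Z).$$
Now apply the symmetric identification in \lemref{phi functor lem}(b) a second time, $\Phi(F(k) \wedge F, Z) \simeq \Map_{\Sp}(F(k), \Phi(F, Z))$, and pull the homotopy limit through the mapping spectrum:
$$\holim_k \Map_{\Sp}(F(k), \Phi(F, Z)) \simeq \Map_{\Sp}(\hocolim_k F(k), \Phi(F, Z)).$$
By \thmref{v Tn thm}(1) the target $\Phi(F, Z)$ is $T(n)$-local, so the $T(n)_*$-acyclic cofiber of $\hocolim_k F(k) \to S^0$ contributes nothing, and the display collapses to $\Map_{\Sp}(S^0, \Phi(F, Z)) \simeq \Phi(F, Z)$. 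The main obstacle is to be precise that these formal manipulations of mapping spectra and homotopy (co)limits are legitimate in $ho(\Sp)$; this is where the function-spectrum formalism for $ho(\Sp)$ discussed at the end of \secref{model categories} is really needed, since Lemma \ref{phi functor lem}(b) must be invoked coherently over the whole tower $\{F(k)\}$ and the interchange of $\Map_{\Sp}(-, -)$ with homotopy (co)limits requires care in either variable.
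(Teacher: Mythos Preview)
Your proof is correct and follows essentially the same route as the paper: each part is handled by the same ingredients (\thmref{v Tn thm}(1) for locality, \lemref{phi functor lem}(b) for the symmetry swap in (2), \thmref{v Tn thm}(2) for (3)), and the key reduction to $\MapS(S^0,-)$ via the $T(n)_*$--equivalence $\hocolim_k F(k)\to S^0$ against a $T(n)$--local target is identical. The only cosmetic differences are that you treat (3) before (2), you make the intermediate object $\Phi(F(k)\sm F,Z)$ explicit where the paper passes directly from $\MapS(F,\Phi(F(k),Z))$ to $\MapS(F(k),\Phi(F,Z))$, and you (correctly) restrict to $F\in\C_n$ since $\Phi(F,Z)$ is only defined there.
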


\begin{proof}  By \thmref{v Tn thm}(1), each $\Phi(F(k),Z)$ is $T(n)_*$--local.  Since the homotopy limit of local objects is again local, statement (1) follows.

To see that (2) is true, given $F \in \C$, we compute in $ho(\Sp)$:
\begin{equation*}
\begin{split}
\MapS(F,\Phi_n^T(Z)) &
\simeq  \MapS(F, \holim_k \Phi(F(k),Z))\\
  & \simeq  \holim_k \MapS(F, \Phi(F(k),Z)) \\
  & \simeq  \holim_k \MapS(F(k), \Phi(F,Z)) \\
  & \simeq  \MapS(\hocolim_k F(k), \Phi(F,Z)) \\
  & \xla{\sim} \MapS(S^0, \Phi(F,Z)) = \Phi(F,Z).
\end{split}
\end{equation*}
Here the third equivalence is an application of \lemref{phi functor lem}(b), while the last map is an equivalence because it is induced by the $T(n)_*$--isomorphism
$q$ and $\Phi(F,Z)$ is $T(n)_*$--local (by \thmref{v Tn thm}(1)).

The proof that (3) is true is similar:
\begin{equation*}
\begin{split}
\Phi_n^T(\Oinfty X) &
= \holim_k \Phi(F(k),\Oinfty X) \\
&\simeq \holim_k \MapS(F(k),L_{T(n)}X) \text{ (by \thmref{v Tn thm}(2))} \\
& = \MapS(\hocolim_k F(k), L_{T(n)}X) \\
  & \xla{\sim} \MapS(S, L_{T(n)}X) = L_{T(n)}X.
\end{split}
\end{equation*}
\end{proof}

\subsection{Rigidifying the construction}

\begin{defn} A {\em rigidification} of diagram (\ref{ho resolution}) consists of the following data. \\

\noindent{(i)}  Finite complexes $B(k)$ of type $n$. \\

\noindent{(ii)}  Natural numbers $d(k)$ such that $d(k)|d(k+1)$ together with unstable $v_n$--self maps $v(k): \Sigma^{d(k)}B(k) \ra B(k)$. \\

\noindent{(iii)} Natural numbers $t(k)$ such that $t(k) \leq t(k+1)$ together with
maps $p(k): B(k) \ra S^{t(k)}$ and $\beta(k): \Sigma^{e(k)}B(k) \ra B(k+1)$, where $e(k) = t(k+1)-t(k)$. \\

These are required to satisfy three properties: \\

\noindent{(a)} $\Sinfty B(k) \in \Sp$ represents $\Sigma^{t(k)}F(k) \in ho(\Sp)$, $\Sinfty p(k)$ represents $\Sigma^{t(k)} q(k)$, and $\Sinfty \beta(k)$ represents $\Sigma^{t(k+1)}f(k)$.\\

\noindent{(b)} With $r(k) = d(k+1)/d(k)$, the diagram
\begin{equation*}
\xymatrix{
\Sigma^{e(k) + d(k+1)}B(k) \ar[d]^{\Sigma^{e(k)}v(k)^{r(k)}} \ar[rr]^-{\Sigma^{d(k+1)}\beta(k)} && \Sigma^{d(k+1)}B(k+1) \ar[d]^{v(k+1)}  \\
\Sigma^{e(k)}B(k) \ar[rr]^-{\beta(k)} && B(k+1) }
\end{equation*}
commutes in $\T$. \\

\noindent{(c)} The diagram
\begin{equation*}
\xymatrix{
\Sigma^{e(k)}B(k) \ar[rr]^-{\beta(k)} \ar[dr]^{\Sigma^{e(k)}p(k)} && B(k+1) \ar[dl]_{p(k+1)}  \\
& S^{t(k+1)} &  }
\end{equation*}
commutes.
\end{defn}

\begin{lem} Rigidifications exist.
\end{lem}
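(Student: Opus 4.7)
The plan is to proceed by induction on $k$, choosing the data $(B(k), v(k), t(k), p(k), \beta(k-1))$ step by step, each time pushing the suspension parameters $t(k)$ and the periods $d(k)$ high enough that all stable identities enforced by properties (a)--(c) can be realized as honest unstable identities in $\T$.

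For the base case I would start with any finite CW complex $A(1)$ of type $n$ together with an unstable $v_n$--self map $u(1)\colon\Sigma^{c(1)}A(1)\to A(1)$, whose existence is guaranteed by the Periodicity Theorem. After suspending by some $t(1)$ large enough, the stable map $\Sigma^{t(1)}q(1)\colon\Sigma^{t(1)}F(1)\to S^{t(1)}$ is realized by an honest space map $p(1)\colon B(1)\to S^{t(1)}$, where $B(1)=\Sigma^{t(1)-c_1}A(1)$ (or some further suspension) is an unstable representative of $\Sigma^{t(1)}F(1)$; set $v(1)$ to be the corresponding suspension of $u(1)$, and $d(1)=c(1)$.

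For the inductive step, assume $B(k),v(k),t(k),p(k)$ are already constructed satisfying (a) for index $k$. I would first pick an unstable model $A(k+1)$ of some suitable suspension of $F(k+1)$, with an unstable $v_n$--self map $u(k+1)$ supplied by the Periodicity Theorem. Then the stable map $f(k)\colon F(k)\to F(k+1)$ and the stable null relation enforcing $q(k+1)\circ f(k)\simeq q(k)$ are realized unstably after suspending sufficiently; this lets me enlarge $t(k+1)$ and redefine $B(k+1)$ as a suspension of $A(k+1)$ so that (a) holds for index $k+1$, a map $\beta(k)\colon\Sigma^{e(k)}B(k)\to B(k+1)$ realizing $\Sigma^{t(k+1)}f(k)$ exists, and the triangle (c) commutes unstably (after possibly one further suspension to kill the discrepancy of two homotopic maps into $S^{t(k+1)}$). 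This choice of $v(k+1)$ is so far only a $v_n$--self map of $B(k+1)$ of some period $d'$; I would then invoke part (b) of the Periodicity Theorem applied to the unstable $v_n$--self maps $v(k)$ and $v(k+1)$ together with the map $\beta(k)$ to find integers $i,j$ with $id(k)=jd'$ making the square
\begin{equation*}
\xymatrix{
\Sigma^{e(k)+id(k)}B(k)\ar[d]_{\Sigma^{e(k)}v(k)^{i}}\ar[rr]^-{\Sigma^{id(k)}\beta(k)} && \Sigma^{jd'}B(k+1)\ar[d]^{v(k+1)^{j}}\\
\Sigma^{e(k)}B(k)\ar[rr]^-{\beta(k)} && B(k+1)
}
\end{equation*}
commute up to homotopy. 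After replacing $v(k+1)$ by its $j$-th iterate I rename its period as $d(k+1):=jd'$, and arrange $d(k+1)=i\,d(k)$ by further iterating $v(k)$ if necessary (legitimate by \lemref{big phi lemma}(i), since iterating preserves (a)); then $r(k)=d(k+1)/d(k)=i$ and the diagram in (b) commutes up to homotopy.

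The only remaining point is that (b) is required to commute strictly in $\T$, not merely up to homotopy. The main obstacle, and the reason one must be willing to increase $t(k+1)$ freely, is exactly this passage from homotopy-commutativity to strict commutativity: I would handle it by using the standard device of replacing $\beta(k)$ by a further suspension and absorbing the chosen homotopy into a reparametrisation, as in the proof of the Periodicity Theorem itself --- equivalently, one further increases $t(k+1)$ and takes an iterate of $v(k+1)$ (allowed by \lemref{big phi lemma}(h,i)) until the two composites in (b) agree on the nose. A similar additional suspension fixes (c). Iterating the construction produces the required rigidification.
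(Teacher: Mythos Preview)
Your inductive strategy, and your use of the Periodicity Theorem to arrange that diagrams (b) and (c) commute up to homotopy, is exactly what the paper does. The gap is in your last paragraph, where you pass from homotopy commutativity to \emph{strict} commutativity in $\T$. Further suspension and iteration of self maps do not turn two homotopic maps into equal maps: if $g_0\simeq g_1\colon X\to Y$ in $\T$, then $\Sigma^m g_0$ and $\Sigma^m g_1$ are still only homotopic, for every $m$, and no amount of iterating $v(k+1)$ changes this. Your appeal to \lemref{big phi lemma}(h),(i) is beside the point: those statements concern weak equivalences between the \emph{functors} $\Phi_{v_0}\simeq\Phi_{v_1}$ and $\Phi_v\simeq\Phi_{v^r}$, and say nothing about strictifying a square of maps of spaces.

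The paper's fix is the standard cofibration/HEP trick: once (b) and (c) commute up to homotopy, replace $B(k+1)$ by the mapping cylinder of $\beta(k)$ so that the new $\beta(k)$ is a cofibration, and then use the homotopy extension property of $\beta(k)$ (for diagram (c)) and of $\Sigma^{d(k+1)}\beta(k)$ (for diagram (b)) to replace $p(k+1)$ and $v(k+1)$ by homotopic maps making both diagrams commute on the nose. This is the missing ingredient in your argument.
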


\begin{proof}[Sketch proof]  This is basically Bousfield's construction of `an admissible spectral $L_n^f$--cospectrum' given in \cite[Thm.12.1]{bousfield3}.  One proceeds by induction on $k$.  Having constructed $B(k)$, $v(k)$, and $p(k)$, using the Periodicity Theorem in the stable range, one chooses $e(k)$ so large that there exists $B(k+1)$, $v(k+1)$, $p(k+1))$, and $\beta(k)$ making property (a) hold, and so that the diagrams in (b) and (c) commute up to homotopy. Then one replaces $B(k+1)$ and $\beta(k)$, so that the new $\beta(k)$ is a cofibration.  Finally, one uses the homotopy extension property of cofibrations (applied to both $\beta(k)$ and $\Sigma^{d(k+1)}\beta(k)$) to replace $v(k+1)$ and $p(k+1)$ by  homotopic maps so that the new diagrams (b) and (c) strictly commute.
\end{proof}

Given a rigidification of (\ref{ho resolution}), we will make use of two families of induced natural maps.

The maps $\beta(k):\Sigma^{e(k)}B(k) \ra B(k+1)$ induce a natural maps $$ \Phi_{v(k+1)}(Z) \ra \Phi_{\Sigma^{e(k)}v(k)^{r(k)}}(Z)= \Omega^{e(k)}\Phi_{v(k)^{r(k)}}(Z).$$
Adjointing these, and suspending $t(k)$--times, yield  natural maps
$$ \beta(k)^*: \Sigma^{t(k+1)}\Phi_{v(k+1)}(Z) \ra  \Sigma^{t(k)}\Phi_{v(k)^{r(k)}}(Z).$$

The `top cell' maps $p(k): B(k) \ra S^{t(k)}$ induce maps
$$ s^{-t(k)}X \ra \Omega^{t(k)}X = \MapS(S^{t(k)}, X) \ra \MapS(B(k),X).$$
Adjointing these, yield natural maps
$$ p(k)^*: X \ra s^{t(k)} \MapS(B(k),X).$$

The last maps are compatible as $k$ varies, and so induce a natural map
$$ p^*: X \ra \holim_k s^{t(k)} \MapS(B(k),X).$$

\begin{lem} \label{little lem} $p^*$ is an equivalence if $X$ is $T(n)$--local.
\end{lem}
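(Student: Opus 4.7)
The plan is to pass to the homotopy category $ho(\Sp)$, identify the target $\holim_k s^{t(k)} \MapS(B(k), X)$ with $\MapS(\hocolim_k F(k), X)$, and recognize $p^*$ as the map induced by $q$. The result then follows because $q$ is a $T(n)_*$--isomorphism and $X$ is $T(n)$--local.

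First I would identify each level of the homotopy limit. Since the natural map $\Sigma^{t(k)} Y \to s^{t(k)} Y$ is a stable equivalence, and $\Sinfty B(k) \simeq \Sigma^{t(k)} F(k)$ in $ho(\Sp)$ by rigidification property~(a), there are natural equivalences
$$ s^{t(k)} \MapS(B(k), X) \simeq \Sigma^{t(k)} \MapS(\Sinfty B(k), X) \simeq \MapS(\Sigma^{-t(k)} \Sinfty B(k), X) \simeq \MapS(F(k), X) $$
in $ho(\Sp)$. Tracing through the construction of $p(k)^*$, by unpacking the adjunctions $s^{-t(k)} X \to \Omega^{t(k)} X \to \MapS(B(k), X)$ and using that $\Sinfty p(k)$ represents $\Sigma^{t(k)} q(k)$, one sees that under these identifications $p(k)^*$ corresponds to $q(k)^*: \MapS(S^0, X) \to \MapS(F(k), X)$.

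Next I would check that these identifications are compatible as $k$ varies --- which is precisely the point of the rigidification data --- so that passing to the homotopy limit converts $p^*$ into the map
$$ q^*: X \simeq \MapS(S^0, X) \longrightarrow \holim_k \MapS(F(k), X) \simeq \MapS(\hocolim_k F(k), X) $$
in $ho(\Sp)$ induced by $q$. Since $q$ is a $T(m)_*$--isomorphism for all $m \geq n$, in particular for $m = n$, its cofiber is $T(n)_*$--acyclic; and since $X$ is $T(n)$--local, $\MapS(-, X)$ sends $T(n)_*$--equivalences to equivalences. Hence $q^*$, and therefore $p^*$, is an equivalence. This closing argument is exactly the one already used at the end of the proof of \thmref{ho thm}(2).

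The main obstacle is the first step: the identification of $s^{t(k)} \MapS(B(k), X)$ with $\MapS(F(k), X)$, and of $p(k)^*$ with $q(k)^*$, must be carried out naturally enough in $k$ to descend to the homotopy limit cleanly. This is routine adjunction bookkeeping of the same flavor as in \corref{hocolim cor} and \propref{phi omega prop}, but it is exactly where the strict commutativity arranged in the definition of a rigidification genuinely gets used.
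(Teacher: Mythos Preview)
Your proposal is correct and follows essentially the same approach as the paper's own proof: pass to $ho(\Sp)$, identify $p(k)^*$ with $q(k)^*: X \to \MapS(F(k),X)$ so that $p^*$ becomes $q^*: \MapS(S^0,X) \to \MapS(\hocolim_k F(k),X)$, and conclude from the fact that $q$ is a $T(n)_*$--isomorphism and $X$ is $T(n)$--local. The paper's proof is extremely terse (it simply asserts that $p(k)^*$ ``corresponds to'' $q(k)^*$), whereas you have spelled out the chain of identifications and flagged the naturality-in-$k$ bookkeeping; this is a faithful expansion rather than a different argument.
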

\begin{proof}  In the homotopy category, $p(k)^*$ corresponds to $$q(k)^*: X \ra \MapS(F(k),X)$$ so that $p^*$ corresponds to
$$ q^*: X = \MapS(S^0,X) \ra \MapS(\hocolim_k F(k),X).$$
This is an equivalence if $X$ is $T(n)$--local.
\end{proof}

\begin{defn} \label{Phi defn} Given a rigidification of (\ref{ho resolution}), we define $\Phi_n: \T \ra \Sp$ by letting $\Phi_n(Z)$ be the homotopy limit of the diagram
\begin{equation*}
\xymatrix @-.8pc{
& \Sigma^{t(3)}\Phi_{v(3)^{r(3)}}(Z) & \Sigma^{t(2)}\Phi_{v(2)^{r(2)}}(Z) & \Sigma^{t(1)}\Phi_{v(1)^{r(1)}}(Z) \\
\dots  \ar[ur] & \Sigma^{t(3)}\Phi_{v(3)}(Z) \ar[u]_{\wr} \ar[ur]_-{\beta(2)^*} & \Sigma^{t(2)}\Phi_{v(2)}(Z) \ar[u]_{\wr} \ar[ur]_-{\beta(1)^*} & \Sigma^{t(1)}\Phi_{v(1)}(Z). \ar[u]_{\wr}
 }
\end{equation*}
\end{defn}

Informally, we write $\displaystyle \Phi_n(Z) = \holim_k \Sigma^{t(k)} \Phi_{v(k)}(Z)$.

\begin{proof}[Proof of \thmref{main thm}]

By construction, in $ho(\Sp)$, $\Phi_n(Z)$  represents the holimit of the diagram
$$ \dots \ra \Phi(F(3),Z) \xra{f(2)^*} \Phi(F(2),Z) \xra{f(1)^*} \Phi(F(1),Z),$$
i.e. $\Phi_n^T(Z)$.  The various properties of $\Phi_n$ stated in \thmref{main thm} are verified by giving proofs similar to those given in proving the analogous properties of $\Phi^T_n$ listed in \thmref{ho thm}, with the constructions of natural equivalences `rigidifying' as needed in straightforward ways.

We run through some details.

Property (1) is clear: $\Phi_n(Z)$ is the homotopy limit of $T(n)$--local spectra, thus is itself $T(n)$--local.

For property (2), we have
\begin{equation*}
\begin{split}
\MapS(B,\Phi_n(Z)) &
=  \MapS(F, \holim_k \Sigma^{t(k)}\Phi_{v(k)}(Z))\\
  & =  \holim_k \MapS(B, \Sigma^{t(k)}\Phi_{v(k)}(Z)) \\
  & \xla{\sim}  \holim_k \Sigma^{t(k)} \MapS(B, \Phi_{v(k)}(Z)) \\
  & \simeq \holim_k \Sigma^{t(k)} \MapS(B(k), \Phi_{v}(Z)) \text{ (by \lemref{smash lemma})}\\
  & \xra{\sim} \holim_k s^{t(k)}\MapS(B(k), \Phi_{v}(Z)) \\
  & \xla{\sim} \Phi_v(Z),
\end{split}
\end{equation*}
since $\Phi_{v}(Z)$ is $T(n)$--local.

For property (3), we have
\begin{equation*}
\begin{split}
\Phi_n(\Oinfty X) &
= \holim_k \Sigma^{t(k)}\Phi_{v(k)}\Oinfty X) \\
& \xra{\sim} \holim_k s^{t(k)}\Phi_{v(k)}\Oinfty X) \\
&\simeq \holim_k s^{t(k)}\MapS(B(k),L_{T(n)}X) \text{ (by \thmref{v Tn thm}(2))} \\
  & \xla{\sim} L_{T(n)}X \text{ (by \lemref{little lem})}.
\end{split}
\end{equation*}

Finally suppose that a functor $\Phi^{\prime}_n: \T \ra \Sp$ satisfies the next two properties, analogues of properties (1) and (2). \\

\noindent{($1^{\prime}$)} \ $\Phi^{\prime}_n(Z)$ is $T(n)$--local, for all spaces $Z$.  \\

\noindent{($2^{\prime}$)} \ There is a weak equivalence of spectra $\Map_{\Sp}(B,\Phi^{\prime}_n(Z)) \simeq \Phi_v(Z)$, for all unstable $v_n$ self maps $v:\Sigma^d B \ra B$, natural in both $Z$ and $v$. \\

\noindent Then we have:
\begin{equation*}
\begin{split}
\Phi^{\prime}_n(Z) & \xrightarrow{\sim} \holim_k s^{t(k)}\MapS(B(k), \Phi^{\prime}_n(Z)) \text{ (by ($1^{\prime}$))}\\
& \xleftarrow{\sim} \holim_k \Sigma^{t(k)}\MapS(B(k), \Phi^{\prime}_n(Z)) \\
  & \simeq \holim_k \Sigma^{t(k)} \Phi_{v(k)}(Z)) \text{ (by ($2^{\prime}$))}\\
  & = \Phi_n(Z).
\end{split}
\end{equation*}
Thus properties (1) and (2) characterize $\Phi_n$.
\end{proof}

\section{Bousfield's adjoint $\Theta_n$} \label{theta_n section}

In \cite{bousfield3}, Bousfield constructs a functor $\Theta_n: \Sp \ra \T$, which serves as a left adjoint of sorts to $\Phi_n$.  In this section, we run through how this works.

\subsection{The construction of $\Theta_v$.}

Given a self map of a space $v: \Sigma^d B \ra B$, the functor
$$ \Phi_v: \T \ra \Sp$$
admits a left adjoint
$$ \Theta_v: \Sp \ra \T,$$
defined as follows.  Given a spectrum $X$ with iterated structure maps $\sigma_r: \Sigma^d X_{rd} \ra X_{(r+1)d}$, $\Theta_v(X)$ is defined to be the coequalizer of the two maps
$$ \bigvee_r \Sigma^d B \sm X_{rd} \begin{array}{c}  v \\[-.08in] \longrightarrow \\[-.1in] \longrightarrow \\[-.1in] \sigma
\end{array} \bigvee_r B \sm X_{rd},$$
where, on the $rd^{th}$ wedge summand, $v$ is $\Sigma^d B \sm X_{rd} \xra{v \sm 1} B \sm X_{rd}$, while $\sigma$ is $ \Sigma^d B \sm X_{rd} \simeq B \sm \Sigma^d X_{rd} \xra{1 \sm \sigma_r} B \sm X_{(r+1)d}$.

It is easy and formal to check that $\Theta_v$ and $\Phi_v$ form an adjoint pair.  However, to be homotopically meaningful, one would like these functors to form a Quillen pair, so that they induce an adjunction on the associated homotopy categories.  For this to be true, it is necessary and sufficient to check that $\Phi_v$ preserves trivial fibrations, and also fibrations between fibrant objects. (See, e.g. \cite[Lem.10.5]{bousfield3} or \cite[Prop.8.5.4]{hirschhorn}.)

$\Phi_v$ certainly preserves trivial fibrations, as it takes a trivial fibration to a levelwise trivial fibration, which will then be a stable trivial fibration.

Suppose that $W \ra Z$ is a fibration in $\T$. In the stable model category structure, $\Phi_v(W) \ra \Phi_v(Z)$ will be a fibration between fibrant objects only if the obvious necessary condition holds: $\Phi_v(W)$ and $\Phi_v(Z)$ must both be fibrant, i.e. $\Omega$--spectra.

Unravelling the definitions, $\Phi_v(Z)$ will be an $\Omega$--spectrum if and only if the map
$$ v^*: \Map(B,Z) \ra \Map(\Sigma^dB,Z)$$
is a weak equivalence.

One can {\em force} this condition to be true as follows.  Let $L_v: \T \ra \T$ denote localization with respect to the map $f$, and then let $\T_v$ denote $\T$ with the associated model category structure in which weak equivalences $\T_v$ are maps $f$ so that $L_vf$ is a weak equivalence in $\T$.  (See e.g. \cite{hirschhorn} for these constructions and many references to the literature.)  We recover a variant of \cite[Lem.10.6]{bousfield3}.

\begin{lem} \label{adjoint lemma} For any $v: \Sigma^d B \ra B$, we have the following. \\

\noindent{\bf (a)} $\Theta_v: \Sp \ra \T_v$ and $\Phi_v: \T_v \ra \Sp$ form a Quillen pair. \\

\noindent{\bf (b)} For all $X \in \Sp$ and $Z \in \T$, there is a natural bijection
$$ [\Theta_v(X), L_vZ] \simeq [X, \Phi_v(L_vZ)].$$
\end{lem}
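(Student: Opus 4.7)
The plan for part (a) is to verify the two sufficient conditions recalled just before the lemma: $\Phi_v$ should send trivial fibrations of $\T_v$ to stable trivial fibrations, and send fibrations of $\T_v$ between fibrant objects to stable fibrations. The trivial fibrations of $\T_v$ coincide with those of $\T$, since a left Bousfield localization alters only the weak equivalences (and correspondingly fibrations between non-fibrant objects), leaving cofibrations and hence trivial fibrations fixed. As already observed, $\Phi_v$ sends a trivial Serre fibration to a levelwise trivial fibration, which is in particular a stable trivial fibration.

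For the second condition, the fibrant objects of $\T_v$ are precisely the Serre-fibrant $v$-local spaces, i.e.\ fibrant $Z$ for which $v^*:\Map(B,Z)\to\Map(\Sigma^d B,Z)$ is a weak equivalence. By the observation immediately preceding the lemma, this is exactly the condition that $\Phi_v(Z)$ be an $\Omega$-spectrum. Thus any fibration $W\to Z$ in $\T_v$ between fibrant objects is a Serre fibration between $v$-local spaces, and $\Phi_v$ sends it to a levelwise fibration between $\Omega$-spectra; any such map is a stable fibration.

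Part (b) then follows formally from the derived adjunction of the Quillen pair of (a). One has a natural bijection $[\Theta_v X,Y]_{\T_v}\cong[X,\Phi_v Y]_{\Sp}$ for all $X\in\Sp$ and $Y\in\T_v$, where as usual $\Theta_v$ is implicitly applied to a cofibrant replacement of $X$. Setting $Y=L_vZ$, which is $v$-local and therefore fibrant in $\T_v$, and invoking the standard feature of left Bousfield localization that $[A,L_vZ]_{\T_v}=[A,L_vZ]_{\T}$, one obtains the asserted bijection.

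The main (mild) obstacle is the identification of fibrant objects of $\T_v$ with those $Z$ on which $\Phi_v$ produces $\Omega$-spectra; this is the content of the unravelling given just before the lemma. Once that is in hand, both parts reduce to bookkeeping with the general theory of left Bousfield localizations and derived adjunctions.
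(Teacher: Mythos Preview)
Your argument is correct and follows essentially the same route as the paper, which in fact gives the proof in the discussion immediately preceding the lemma rather than in a separate proof block. You make explicit two points the paper leaves implicit: that trivial fibrations in $\T_v$ agree with those in $\T$ (a general feature of left Bousfield localization), and that a level fibration between $\Omega$--spectra is a stable fibration (again a general fact about localized model structures). Your derivation of (b) from the derived adjunction is the intended one; the paper simply states (b) without further comment.
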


\subsection{Periodic localization of spaces.}

In \cite[\S 4.3]{bousfield3}, Bousfield defines
$$ L_n^f: \T \ra \T$$
to be localization with respect to the map $\Sigma A \ra *$, where $A$ is chosen so that $\Sinfty A$ is equivalent to a finite spectrum of type $n+1$, and the connectivity of $H_*(A;\Z/p)$ is chosen to be as low as possible.  His proof that this is independent of choice appears in \cite[Thm.9.15]{bousfieldJAMS}, and depends on the Thick Subcategory Theorem.

For our purposes, $L_n^f: \T \ra \T$ satisfies two elementary properties that we care about.

\begin{lem} \label{local Oinfy lemma}If $X$ is a $L_n^f$--local spectrum, then $\Oinfty X$ is $L_n^f$--local space.
\end{lem}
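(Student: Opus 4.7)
The plan is to show that $\Map_{\T}(\Sigma A, \Oinfty X)$ is weakly contractible for every space $A$ with $\Sinfty A$ equivalent to a finite spectrum of type $n{+}1$; this is exactly the condition that $\Oinfty X$ be $L_n^f$--local as a space. We may assume at the outset that $X$ is fibrant, since replacing $X$ by a fibrant replacement preserves the property of being $L_n^f$--local (it depends only on the homotopy type of $X$ in $\Sp$).

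Assuming $X$ is an $\Omega$--spectrum, the $(\Sinfty, \Oinfty)$ adjunction together with fibrancy of $X$ identifies
\[
\pi_k\MapT(\Sigma A, \Oinfty X) \;=\; [\Sigma^{k+1} A, X_0]_{\T} \;=\; [\Sinfty \Sigma^{k+1} A, X]
\]
for every $k \ge 0$. Now $\Sinfty \Sigma^{k+1} A \simeq \Sigma^{k+1} \Sinfty A$ is again equivalent to a finite spectrum of type $n{+}1$, so it lies in $\C_{n+1}$. Since $X$ is $L_n^f$--local, condition (i) of \lemref{Tn local lem} forces $[\Sinfty\Sigma^{k+1}A, X] = 0$. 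Hence every homotopy group of $\MapT(\Sigma A, \Oinfty X)$ vanishes, and the mapping space is weakly contractible, as required.

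The argument is essentially a routine unwinding of adjunctions once \lemref{Tn local lem} is available, so I do not anticipate any real obstacle; the only point that needs mild attention is the reduction to $X$ fibrant so that $\Oinfty X$ has the correct homotopy type and the adjunction computation of $\pi_k\MapT(\Sigma A, \Oinfty X)$ is valid.
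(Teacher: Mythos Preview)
Your proof is correct and follows the same route as the paper's: compute $\pi_k\MapT(\Sigma A,\Oinfty X)$ via the $(\Sinfty,\Oinfty)$ adjunction as $[\Sinfty\Sigma^{k+1}A,X]$, and observe this vanishes because $\Sigma^{k+1}\Sinfty A\in\C_{n+1}$ and $L_n^f$--local spectra receive no maps from $\C_{n+1}$. Your version is slightly more explicit about passing to a fibrant replacement and about quantifying over all admissible $A$ (only the one fixed in the definition of $L_n^f$ is actually needed), but the argument is otherwise identical.
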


\begin{proof} Let $A$ be chosen as in the definition of $L_n^f: \T \ra \T$. For all $t$, $\pi_t(\MapT(\Sigma A, \Oinfty X)) = [\Sinfty \Sigma^{t+1}A,X] = 0$, since $L_n^f$--local spectra admit no nontrivial maps from objects in $\C_{n+1}$. Thus $\MapT(\Sigma A, \Oinfty X) \simeq *$, and so $\Oinfty X$ is $L_n^f$--local.
\end{proof}

\begin{lem} \label{double susp lemma} If $Z$ is a $L_n^f$--local space, then it is also $L_v$--local for all unstable $v_n$--self maps $v: \Sigma^d B \ra B$ that are double suspensions.
\end{lem}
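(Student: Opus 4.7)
The plan is to reduce $L_v$-locality to showing that a certain mapping space out of the cofiber $C_v$ is contractible, and then to invoke the Thick Subcategory Theorem via Bousfield's characterization of $L_n^f$-locality on spaces. By definition, $Z$ is $L_v$-local precisely when $v^*: \MapT(B,Z) \ra \MapT(\Sigma^d B, Z)$ is a weak equivalence. Applying $\MapT(-,Z)$ to the homotopy cofiber sequence $\Sigma^d B \xra{v} B \ra C_v$ in $\T$ yields a fibration sequence
$$ \MapT(C_v, Z) \ra \MapT(B,Z) \xra{v^*} \MapT(\Sigma^d B, Z),$$
so it suffices to show that $\MapT(C_v, Z) \simeq *$.

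Next I would use the double suspension hypothesis to exhibit $C_v$ itself as a suspension of a complex of the right stable type. Writing $B = \Sigma^2 B'$ and $v = \Sigma^2 u$ for some self map $u: \Sigma^d B' \ra B'$, the cofiber of $v$ identifies with $\Sigma^2 C_u = \Sigma A$, where $A := \Sigma C_u$. Since $K(n)_*(v)$ being an isomorphism forces $K(n)_*(u)$ to be an isomorphism, and since $B'$ has type $n$, the long exact sequence of $K(m)_*$ applied to $\Sigma^d B' \xra{u} B' \ra C_u$ gives $K(m)_*(C_u) = 0$ for $0 \leq m \leq n$. Hence $\Sinfty A = \Sigma \Sinfty C_u$ is a finite spectrum of type at least $n+1$.

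Finally, I would invoke Bousfield's characterization of $L_n^f$-locality on spaces from \cite[Thm.9.15]{bousfieldJAMS}: thanks to the Thick Subcategory Theorem, a space $Z$ is $L_n^f$-local if and only if $\MapT(\Sigma A', Z) \simeq *$ for \emph{every} finite complex $A'$ whose suspension spectrum has type $\geq n+1$, not just for the particular $A'$ used in the definition of $L_n^f$. Applied to our $A$, this yields $\MapT(C_v, Z) = \MapT(\Sigma A, Z) \simeq *$, completing the argument.

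The only point of any real friction is the double suspension hypothesis, which is used exactly once but essentially: it is what lets us present $C_v$ in the form $\Sigma A$, which is needed because $L_n^f$-locality of $Z$ supplies vanishing of mapping spaces only out of \emph{suspensions} of type $(n+1)$ finite complexes, not out of such complexes themselves.
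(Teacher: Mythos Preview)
Your approach is essentially the paper's, but there is a genuine gap in the reduction step. From the fibration sequence
\[
\MapT(C_v, Z) \ra \MapT(B,Z) \xra{v^*} \MapT(\Sigma^d B, Z),
\]
contractibility of the fiber $\MapT(C_v,Z)$ gives that $v^*$ induces isomorphisms on $\pi_n$ for $n\geq 1$ and an injection on $\pi_0$, but \emph{not} surjectivity on $\pi_0$; the mapping space $\MapT(\Sigma^d B,Z)$ need not be connected. So ``it suffices to show $\MapT(C_v,Z)\simeq *$'' is not quite right.

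The fix is already implicit in what you proved. You showed $K(m)_*(C_u)=0$ for $m\leq n$, so $C_u$ itself (not just $A=\Sigma C_u$) is finite of type $\geq n+1$. Now use the double suspension to desuspend the Puppe sequence one step: since $v=\Sigma^2 u$, there is a cofibration sequence
\[
\Sigma C_u \ra \Sigma^d B \xra{v} B \ra \Sigma^2 C_u,
\]
and applying $\MapT(-,Z)$ gives the four-term fiber sequence
\[
\MapT(\Sigma^2 C_u,Z) \ra \MapT(B,Z) \xra{v^*} \MapT(\Sigma^d B,Z) \ra \MapT(\Sigma C_u,Z).
\]
Bousfield's characterization kills \emph{both} outer terms, and the vanishing of the right-hand term is what makes $v^*$ a weak equivalence (it exhibits $\MapT(B,Z)$ as the fiber of a map to a contractible space). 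This is exactly the paper's argument. Note also that your closing remark about where the double suspension enters is slightly off: a single suspension already writes $C_v$ as $\Sigma(\text{something})$; the second suspension is what makes the \emph{preceding} Puppe term $\Sigma C_u$ a suspension of a type $\geq n+1$ complex, which is what handles $\pi_0$.
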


\begin{proof} Since $v$ is a double suspension, it fits into a cofibration sequence of the form
$$ \Sigma C \ra \Sigma^d B \xra{v} B \ra \Sigma^2 C,$$ where $\Sinfty C$ has type $n+1$.  This induces a fibration sequence
$$ \MapT(\Sigma^2 C, Z) \ra \MapT(B, Z) \xra{v^*} \MapT(\Sigma^d B, Z) \ra \MapT(\Sigma C, Z),$$
in which the first and last of these mapping spaces are null if $Z$ is $L_n^f$--local.  Thus the middle map is an equivalences, and so $Z$ is $L_v$--local.
\end{proof}

A deeper property of $L_n^f$ goes as follows.

\begin{prop} \label{Ln prop} If $v$ is a $v_n$--self map, the natural map $\Phi_v(Z) \ra \Phi_v(L_n^fZ)$ is a stable equivalence.
\end{prop}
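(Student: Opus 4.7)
The plan is to reduce the claim to the vanishing of $\Phi_v$ on an $L_n^f$-acyclic space, and then invoke stable Nilpotence. First, by \lemref{big phi lemma}(i) together with the Periodicity Theorem, one may replace $v$ by a suitable iterate and assume it is a double suspension. Applying \lemref{big phi lemma}(e) to the map $Z \to L_n^fZ$ (functorially replaced by a fibration) yields the stable fiber sequence
$$\Phi_v(F) \to \Phi_v(Z) \to \Phi_v(L_n^fZ),$$
where $F = \hofib(Z \to L_n^fZ)$, reducing the problem to proving $\Phi_v(F) \simeq *$.

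To control $\Phi_v(F)$ I will exploit that $L_n^fF \simeq *$. Constructing this nullification via the small object argument presents $L_n^fF$ as a (possibly transfinite) sequential homotopy colimit $F = F_0 \to F_1 \to F_2 \to \cdots$ with colimit equivalent to $*$, where each $F_\alpha \to F_{\alpha+1}$ is a pushout along wedges of $\Sigma A \hookrightarrow C\Sigma A$ for $\Sinfty A$ of stable type $n+1$. Applying $\Sinfty$, the stage-by-stage cofiber is a wedge of $\Sigma^2 \Sinfty A$, stably of type $n+1$. The Nilpotence Theorem implies that any $v_n$-self map of a type $\geq n+1$ finite spectrum is nilpotent, hence $T(n)\sm X \simeq *$ for any such $X$; each stage is therefore a $T(n)$-equivalence and passing to the homotopy colimit gives $L_{T(n)}\Sinfty F \simeq L_{T(n)}\Sinfty(*) \simeq *$.

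The final, and hardest, step converts this stable $T(n)$-vanishing into the unstable vanishing $\Phi_v(F) \simeq *$. Using the stabilization unit $F \to \Oinfty \Sinfty F^{fib}$, \thmref{v Tn thm}(2) identifies the target $\Phi_v(\Oinfty \Sinfty F^{fib}) \simeq \MapS(B, L_{T(n)}\Sinfty F)$, which is contractible by the previous paragraph. What remains is to show that the induced map $\Phi_v(F) \to \Phi_v(\Oinfty\Sinfty F^{fib})$ is itself a stable equivalence, equivalently that the stabilization map $v^{-1}[B,F]_* \to v^{-1}[\Sinfty B, \Sinfty F]_*$ is an isomorphism when $v$ is a $v_n$-self map. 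This unstable-to-stable identification of $v_n$-periodic $B$-homotopy is the main obstacle; it follows by a James-Hopf splitting analysis in which the obstructions to stabilization of $\MapT(\Sigma^m B, F)$ are governed by homotopy groups built from smash powers of stable type $\geq n+1$, on which $v_n$-self maps are stably nilpotent by the Nilpotence Theorem and are therefore inverted away by $v^{-1}$. This is precisely the flavor of argument Bousfield carries out via careful tower constructions in \cite[\S 11]{bousfield3}, paralleling the methods of \lemref{hocolim lemma} and \propref{phi omega prop}.
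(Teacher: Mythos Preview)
Your reduction to the fiber $F=\hofib(Z\to L_n^fZ)$ and the verification that $L_{T(n)}\Sinfty F\simeq *$ are both fine. The genuine gap is the last step, which you yourself flag as ``the main obstacle'': showing that $\Phi_v(F)\to\Phi_v(\Oinfty\Sinfty F)$ is an equivalence for $L_n^f$-acyclic $F$. This is not a lemma you have proved; it is essentially a restatement of the proposition specialized to $L_n^f$-acyclic spaces (take $Z=F$, so $L_n^fZ\simeq *$). Your James--Hopf sketch does not close the gap. Even granting that the layers of the relevant Goodwillie tower are infinite loop spaces on $T(n)$-acyclic spectra (so have trivial $\Phi_v$ by \thmref{v Tn thm}(2)), you would still need convergence of that tower for an arbitrary $L_n^f$-acyclic $F$ \emph{and} an argument allowing $v^{-1}$ to commute with the inverse limit---exactly the kind of issue \lemref{convergence lemma} warns about, and which in the Arone--Mahowald case is handled by explicit connectivity growth that you have no analogue of here. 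The phrase ``smash powers of stable type $\geq n+1$'' is also imprecise, since $F$ is not finite and has no type; at best you mean the extended powers of $\Sinfty F$ are $T(n)$-acyclic, which is true but only gives stable information.

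The paper's proof avoids this circularity by never passing to the fiber. It works directly with the telescope $\hocolim_r\MapT(\Sigma^{rd}B,Z)$ and factors the comparison map through $L_n^f$ of this telescope. The decisive input is Bousfield's \cite[Theorem 8.3]{bousfieldJAMS}, which gives a fixed $\delta$ so that $L_n^f\MapT(C,Z)\to\MapT(C,L_n^fZ)$ is a $\pi_i$-isomorphism for $i\geq\delta$ and all finite $C$; this handles the interchange of $L_n^f$ with the mapping space uniformly in $r$. A separate easy Nilpotence argument shows the telescope is already $L_n^f$-local. So the paper invokes the same deep Bousfield machinery you gesture toward, but applies it at a point where it actually delivers the conclusion rather than restating the goal.
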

\begin{proof} We wish to show that the map of spaces
$$ \hocolim_r \MapT(\Sigma^{rd}B,Z) \ra \hocolim_r \MapT(\Sigma^{rd}B,L_n^fZ)$$
induces an isomorphism on homotopy groups (in high dimensions).  This is pretty much \cite[Theorem 11.5]{bousfieldJAMS}, and we sketch how the proof goes.

The map we care about factors in the homotopy category:
\begin{equation*} 
\xymatrix{
 \hocolim_r \MapT(\Sigma^{rd}B,Z) \ar[d] \ar[r] & \hocolim_r \MapT(\Sigma^{rd}B,L_n^fZ)  \\
L_n^f \hocolim_r \MapT(\Sigma^{rd}B,Z)  & \hocolim_r L_n^f \MapT(\Sigma^{rd}B,Z), \ar[l]_{\sim} \ar[u]}
\end{equation*}
where the indicated equivalence is \cite[Lemma 11.6]{bousfieldJAMS}.

The right vertical arrow induces an isomorphism on homotopy groups in high dimensions, due to \cite[Theorem 8.3]{bousfieldJAMS}, a general result which describes to what extent functors like $L_{\Sigma C}$ preserve fibrations.  Applied to the case in hand, one learns that there is a number $\delta$ such that the natural map
$$ L_n^f \MapT(C,Z) \ra \MapT(C, L_n^f Z)$$
will induce an isomorphism on $\pi_i$ for $i \geq \delta$ for all finite complexes $C$. Thus our right vertical map will induce isomorphisms on $\pi_i$ in the same range.

It follows that we need just check that the left vertical map is an equivalence, or, otherwise said, that $\displaystyle \hocolim_r \MapT(\Sigma^{rd}B,Z)$ is $L_n^f$--local.  Recalling that $L_n^f = L_{\Sigma A}$ for a well chosen $A$ of type $(n+1)$, we have
\begin{equation*}
\begin{split}
\MapT(\Sigma A, \hocolim_r \MapT(\Sigma^{rd}B,Z)) &
\simeq \hocolim_r \MapT(\Sigma A, \MapT(\Sigma^{rd}B,Z) \\
  & = \hocolim_r \MapT(\Sigma A \sm \Sigma^{rd}B,Z) \\
  & \simeq *,
\end{split}
\end{equation*}
since $1_{\Sigma A} \sm v$ will be nilpotent by the Nilpotence Theorem.
\end{proof}

\begin{rem} It would interesting to have a proof of this proposition that avoided the use of \cite[Theorem 8.3]{bousfieldJAMS}.
\end{rem}

Combining this proposition with \lemref{adjoint lemma} and \lemref{double susp lemma} yields the next theorem.

\begin{thm} \label{theta v thm} If $v: \Sigma^d B \ra B$ is a $v_n$--self map and a double suspension, there is a natural bijection
$$ [\Theta_v(X), L_n^fZ] \simeq [X,\Phi_v(Z)],$$
for all $Z \in \T$ and $X \in \Sp$.
\end{thm}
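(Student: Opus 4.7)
The plan is to chain together the three cited results in a single string of natural bijections, moving $L_n^f$ through the adjunction by replacing it with the map-specific localization $L_v$.

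First I would rewrite $L_n^f Z$ as $L_v(L_n^f Z)$. Since $v$ is by hypothesis both a $v_n$--self map and a double suspension, \lemref{double susp lemma} applies to the $L_n^f$--local space $L_n^f Z$ and tells us it is already $L_v$--local, so this replacement is a natural weak equivalence, giving
\[ [\Theta_v(X), L_n^f Z] \simeq [\Theta_v(X), L_v(L_n^f Z)]. \]
Next, \lemref{adjoint lemma}(b), applied with $L_n^f Z$ in place of $Z$, yields
\[ [\Theta_v(X), L_v(L_n^f Z)] \simeq [X, \Phi_v(L_n^f Z)]. \]
Finally, \propref{Ln prop} says that the unit $Z \to L_n^f Z$ induces a stable equivalence $\Phi_v(Z) \to \Phi_v(L_n^f Z)$, so
\[ [X, \Phi_v(L_n^f Z)] \simeq [X, \Phi_v(Z)]. \]
Composing these three natural bijections gives the claimed identification.

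Each step is effectively a direct invocation of a previously established result, so there is no real obstacle beyond checking naturality, which is automatic since every bijection comes from either a natural weak equivalence of spaces/spectra or the Quillen adjunction. The only subtle point to flag is that \lemref{adjoint lemma}(b) is formulated for the $L_v$--localized target, which is precisely why one needs \lemref{double susp lemma} at the start to bring $L_n^f Z$ into the image of $L_v$, and \propref{Ln prop} at the end to eliminate $L_n^f$ from the $\Phi_v$--side.
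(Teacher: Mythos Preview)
Your proof is correct and is exactly the argument the paper has in mind: the paper simply states that the theorem follows by combining \propref{Ln prop} with \lemref{adjoint lemma} and \lemref{double susp lemma}, and you have spelled out that combination in the natural order. The only minor remark is that in your second step, \lemref{adjoint lemma}(b) literally gives $[X,\Phi_v(L_v(L_n^f Z))]$ on the right, and one then uses the equivalence $L_v(L_n^f Z)\simeq L_n^f Z$ once more (together with \lemref{big phi lemma}(e)) to simplify to $[X,\Phi_v(L_n^f Z)]$; this is implicit in what you wrote.
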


\subsection{The definition of $\Theta_n$.}

Let the following data make up a rigidification of diagram (\ref{ho resolution}), as used in the definition of $\Phi_n$: \\

\noindent{(i)}  Finite complexes $B(k)$ of type $n$. \\

\noindent{(ii)}  Natural numbers $d(k)$ such that $d(k)|d(k+1)$ together with unstable $v_n$--self maps $v(k): \Sigma^{d(k)}B(k) \ra B(k)$. \\

\noindent{(iii)} Natural numbers $t(k)$ such that $t(k) \leq t(k+1)$ together with
maps $p(k): B(k) \ra S^{t(k)}$ and $\beta(k): \Sigma^{e(k)}B(k) \ra B(k+1)$, where $e(k) = t(k+1)-t(k)$. \\

By double suspending everything, we can also assume that each $v(k)$ is a double suspension.

\begin{defn}  Given this data, we define $\Theta_n: \Sp \ra \T$ by letting $\Theta_n(Z)$ be the homotopy colimit of the diagram
\begin{equation*}
\xymatrix @-.8pc{
\Theta_{v(1)^{r(1)}}(\Omega^{t(1)}X) \ar[d] \ar[dr]^-{\beta(1)_*}& \Theta_{v(2)^{r(2)}}(\Omega^{t(2)}X) \ar[d] \ar[dr]^-{\beta(2)_*}& \Theta_{v(3)^{r(3)}}(\Omega^{t(3)}X) \ar[d] \ar[dr]^-{\beta(3)_*}& \\
\Theta_{v(1)}(\Omega^{t(1)}X)& \Theta_{v(2)}(\Omega^{t(2)}X)& \Theta_{v(3)}(\Omega^{t(3)}X) & \dots, \\
 }
\end{equation*}
where each vertical map will be an $L_n^f$--equivalence, and each $\beta(k)_*$ is itself a natural zig-zag diagram
$$ \Theta_{v(k)^{r(k)}}(\Omega^{t(k)}X) \xla{\sim} \Theta_{v(k)^{r(k)}}(\Sigma^{e(k)}\Omega^{t(k+1)}X) \xra{\beta(k)_*} \Theta_{v(k+1)}(\Omega^{t(k)}X).$$
\end{defn}

Informally, we write $\displaystyle \Theta_n(X) = \hocolim_k \Theta_{v(k)}(\Omega^{t(k)}X)$.

From \thmref{theta v thm}, we deduce
\begin{thm}[{\cite[Theorem 5.4(iii)]{bousfield3}}] \label{adjoint thm} There is a natural bijection
$$ [\Theta_n(X), L_n^fZ] = [X, \Phi_n(Z)]$$
for all $Z \in \T$ and $X \in \Sp$.
\end{thm}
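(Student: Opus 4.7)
The plan is to derive the adjunction by combining, for each $k$, the adjunction of \thmref{theta v thm} with the universal properties of the hocolim and holim defining $\Theta_n(X)$ and $\Phi_n(Z)$. The rigidification has been arranged so that every $v(k)$ (hence every $v(k)^{r(k)}$) is both a $v_n$--self map and a double suspension, so \thmref{theta v thm} applies termwise.

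Explicitly, I would chain together the natural bijections
\begin{equation*}
\begin{split}
[\Theta_n(X), L_n^f Z] & \;\cong\; \lim_k \,[\Theta_{v(k)}(\Omega^{t(k)}X), L_n^f Z] \\
 & \;\cong\; \lim_k \,[\Omega^{t(k)}X, \Phi_{v(k)}(Z)] \\
 & \;\cong\; \lim_k \,[X, \Sigma^{t(k)} \Phi_{v(k)}(Z)] \\
 & \;\cong\; [X, \Phi_n(Z)].
\end{split}
\end{equation*}
The first bijection uses the universal property of a sequential hocolim, together with the observation that the vertical zig-zag maps $\Theta_{v(k)^{r(k)}}(\Omega^{t(k)} X) \to \Theta_{v(k)}(\Omega^{t(k)} X)$ in the defining diagram of $\Theta_n(X)$ are $L_n^f$--equivalences (being adjoint to the stable equivalences $\Phi_{v(k)}(Z) \simeq \Phi_{v(k)^{r(k)}}(Z)$ of \lemref{big phi lemma}(i)), so become honest equivalences upon mapping into the $L_n^f$--local target $L_n^f Z$. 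The second bijection is \thmref{theta v thm} applied termwise, the third is the stable suspension--loop adjunction, and the fourth is the universal property of the sequential holim defining $\Phi_n(Z)$.

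The main technical point is that this chain intertwines the connecting arrows on both sides --- the maps $\beta(k)_*$ on the $\Theta_n$ side must correspond, under the chain of adjunctions, to the maps $\beta(k)^*$ on the $\Phi_n$ side. This comes from naturality of \thmref{theta v thm} in the self map $v$, applied to the commuting squares of clause (b) of the definition of rigidification, together with naturality of the stable suspension--loop adjunction in $Z$. The residual concern is the $\lim^1$ term accompanying such a hocolim/holim interchange with $[-,-]$; one sidesteps it by running the argument one stable degree at a time, observing that the chain produces an isomorphism between the resulting Milnor short exact sequences, so that both the $\lim^1$ and $\lim$ parts match. This yields a natural bijection in both $X$ and $Z$, as required.
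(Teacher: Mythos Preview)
Your proposal is correct and follows essentially the same approach as the paper: both pass to the limit of the termwise adjunctions from \thmref{theta v thm}, and both single out the compatibility of the connecting maps $\beta(k)_*$ (on the $\Theta_n$ side) with $\beta(k)^*$ (on the $\Phi_n$ side) as the one point requiring care. The paper carries out that compatibility check via an explicit three--row commutative diagram, and remarks afterward that the check is genuinely delicate because $\Sigma^{t(k)}\Phi_{v(k)}$ mixes the \emph{left} adjoint $\Sigma$ with the \emph{right} adjoint $\Phi_{v(k)}$; your appeal to ``naturality of \thmref{theta v thm} in $v$'' together with ``naturality of the stable suspension--loop adjunction'' is exactly what that diagram encodes, though you should be aware that unwinding it is less automatic than the word ``naturality'' suggests. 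Your Milnor $\lim^1$ discussion is a point the paper leaves implicit.
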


\begin{proof}  The idea is that, since $\Phi_n$ is the limit of functors of the form $\Phi_v$, and $\Theta_n$ is the colimit of their adjoints $\Theta_v$, the theorem should follow from \thmref{theta v thm}.  The only detail needing a careful check is that the zig-zag natural map $$ \Theta_{v(k)^{r(k)}}(\Omega^{t(k)}X) \xla{\sim} \Theta_{v(k)^{r(k)}}(\Sigma^{e(k)}\Omega^{t(k+1)}X) \xra{\beta(k)_*} \Theta_{v(k+1)}(\Omega^{t(k+1)}X)$$
used in the definition of $\Theta_n$ above really is adjoint to the more directly defined map
$$ \Sigma^{t(k+1)}\Phi_{v(k+1)}(Z) \xra{\beta(k)^*}  \Sigma^{t(k)}\Phi_{v(k)^{r(k)}}(Z)$$
used in the definition of $\Phi_n$.

To see this we have a commutative diagram:
{\tiny
\begin{equation*}
\xymatrix @-1.2pc{
\MapT(\Theta_{v(k+1)}\Omega^{t(k+1)}X,Z) \ar[dd]^{\wr} \ar[r]^-{\beta(k)^*} & \MapT(\Theta_{v(k)^{r(k)}}\Sigma^{e(k)}\Omega^{t(k+1)}X,Z) \ar[dd]^{\wr} &  \MapT(\Theta_{v(k)^{r(k)}}\Omega^{t(k)}X,Z)\ar[dd]^{\wr} \ar[l]_-{\sim} \\
&& \\
\MapT(\Theta_{v(k+1)}s^{-t(k+1)}X,Z) \ar@{=}[dd] \ar[r]^-{\beta(k)^*} & \MapT(\Theta_{v(k)^{r(k)}}\Sigma^{e(k)}s^{-t(k+1)}X,Z) \ar@{=}[dd] &  \MapT(\Theta_{v(k)^{r(k)}}s^{-t(k)}X,Z) \ar@{=}[dd] \ar[l]_-{\sim} \\
&& \\
\MapS(X,s^{t(k+1)}\Phi_{v(k+1)}Z)  \ar[r]^-{\beta(k)^*}       & \MapS(X,s^{t(k+1)}\Omega^{e(k)}\Phi_{v(k)^{r(k)}}Z)       &  \MapS(X,s^{t(k)}\Phi_{v(k)^{r(k)}}Z)      \ar[l]_-{\sim} \\
&& \\
\MapS(X,\Sigma^{t(k+1)}\Phi_{v(k+1)}Z) \ar[uu]_{\wr} \ar[r]^-{\beta(k)^*} & \MapS(X,\Sigma^{t(k)}\Phi_{v(k)^{r(k)}}Z) \ar[uu]_{\wr} &  \MapS(X,\Sigma^{t(k)}\Phi_{v(k)^{r(k)}}Z). \ar[uu]_{\wr} \ar@{=}[l] \\
}
\end{equation*}
}

\end{proof}

\begin{rem} The lack of elegance in the proof of the `detail' checked above reflects the fact, though our functor $\Phi_n: \T \ra \Sp$ is intuitively the homotopy limit of right adjoint functors $\Phi_{v(k)}$, it does {\em not} make up the right part of an adjoint pair.  Note that our official definition involves the use of $\Sigma: \Sp \ra \Sp$, which induces an equivalence of homotopy categories, but is a left adjoint, not a right one. It doesn't seem possible to somehow replace $\Sigma^{t(k)}$ by $s^{t(k)}$ (which {\em is} a right adjoint) in Definition \ref{Phi defn}.  This same problem shows up in Bousfield's construction: see the paragraph before \cite[Theorem 11.7]{bousfield3}.
\end{rem}

\begin{cor} In $ho(\Sp)$, there is a natural equivalence
$$ L_{T(n)}\Sinfty \Theta_n(X) \simeq L_{T(n)}X.$$
\end{cor}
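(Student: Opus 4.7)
The plan is to prove the corollary by a Yoneda-style argument: show that $L_{T(n)}\Sinfty\Theta_n(X)$ and $L_{T(n)}X$ corepresent the same functor on the homotopy category of $T(n)$--local spectra, naturally in $X$. Since both objects are $T(n)$--local, any such natural isomorphism of representable functors is induced by a canonical equivalence between them.

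Fix a $T(n)$--local spectrum $Y$. First I would collapse the outer $L_{T(n)}$ using $T(n)$--locality of $Y$, so that
$[L_{T(n)}\Sinfty\Theta_n(X),Y]=[\Sinfty\Theta_n(X),Y]$ and $[L_{T(n)}X,Y]=[X,Y]$. Next, I would apply the $(\Sinfty,\Oinfty)$ adjunction at the homotopy level, after choosing a fibrant replacement, to get $[\Sinfty\Theta_n(X),Y]=[\Theta_n(X),\Oinfty Y^{fib}]$. To feed this into \thmref{adjoint thm}, the key point is that $\Oinfty Y^{fib}$ is $L_n^f$--local: by property (i) of \lemref{Tn local lem}, any $T(n)$--local spectrum is $L_n^f$--local, and then \lemref{local Oinfy lemma} gives the desired locality of $\Oinfty Y^{fib}$. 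Hence $L_n^f\Oinfty Y^{fib}\simeq \Oinfty Y^{fib}$, and \thmref{adjoint thm} gives
\[
[\Theta_n(X),\Oinfty Y^{fib}]=[\Theta_n(X),L_n^f\Oinfty Y^{fib}]=[X,\Phi_n(\Oinfty Y^{fib})].
\]

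Now property (3) of \thmref{main thm} identifies $\Phi_n(\Oinfty Y^{fib})\simeq L_{T(n)}Y\simeq Y$, the last equivalence again using that $Y$ is $T(n)$--local. Chaining the isomorphisms yields a natural bijection
\[
[L_{T(n)}\Sinfty\Theta_n(X),Y]\;\cong\;[X,Y]\;=\;[L_{T(n)}X,Y]
\]
for every $T(n)$--local $Y$, natural in both $X$ and $Y$. Applying this to $Y=L_{T(n)}\Sinfty\Theta_n(X)$ and to $Y=L_{T(n)}X$ and tracing the identity morphisms through produces mutually inverse maps, giving the required natural equivalence in $ho(\Sp)$.

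The step requiring genuine input, rather than formal manipulation, is the passage from $[\Theta_n(X),\Oinfty Y^{fib}]$ to $[X,\Phi_n(\Oinfty Y^{fib})]$: this uses the adjunction of \thmref{adjoint thm}, and that theorem in turn depends on the $L_n^f$--locality of $\Oinfty Y^{fib}$. So the main conceptual obstacle is really just verifying that the target space is $L_n^f$--local; once \lemref{local Oinfy lemma} and the implication $T(n)\text{--local}\Rightarrow L_n^f\text{--local}$ are in place, everything else is bookkeeping via the characterizing properties of $\Phi_n$ from \thmref{main thm}.
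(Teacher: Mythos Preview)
Your proof is correct and follows essentially the same route as the paper: both arguments set up a chain of natural bijections $[\Sinfty\Theta_n(X),L_{T(n)}Y]\cong[\Theta_n(X),\Oinfty L_{T(n)}Y]\cong[X,\Phi_n(\Oinfty L_{T(n)}Y)]\cong[X,L_{T(n)}Y]$ using the $(\Sinfty,\Oinfty)$ adjunction, \lemref{local Oinfy lemma}, \thmref{adjoint thm}, and property (3) of \thmref{main thm}, and then conclude by Yoneda. Your presentation fixes $Y$ to be $T(n)$--local from the outset and spells out the fibrant replacement, but the logic is identical.
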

\begin{proof}  In the last theorem, let $Z$ be the space $\Oinfty L_{T(n)}Y$, which is $L_n^f$--local  by \lemref{local Oinfy lemma}. We see that, for all $X,Y \in \Sp$, there are natural isomorphisms
\begin{equation*}
\begin{split}
[\Sinfty \Theta_n(X), L_{T(n)}Y] &
\simeq [\Theta_n(X), \Oinfty L_{T(n)}Y] \\
  & \simeq [X, \Phi_n(\Oinfty L_{T(n)}Y)] \\
  & \simeq [X,L_{T(n)}Y].
\end{split}
\end{equation*}
The corollary then follows from Yoneda's lemma.
\end{proof}

\begin{rem} The careful reader will note that this corollary is {\em not} dependent on \propref{Ln prop} (and thus not dependent on Bousfield's careful study of the behavior of localized fibration sequences), as we have derived it by only applying our other results to a space $Z$ that is $L_n^f$--local.
\end{rem}

\begin{rem}  From the corollary, it follows that the Telescope Conjecture is equivalent to the statement that if a {\em space} is $K(n)_*$--acyclic, then it is $T(n)_*$--acyclic.
\end{rem}

\section{The section $\eta_n$} \label{eta_n section}

One of the main applications of the functor $\Phi_n$, is that it leads to the construction of a natural transformation
$$\eta_n(X): L_{T(n)}X \ra L_{T(n)} \Sinfty \Oinfty X$$
which is a natural homotopy section of the $T(n)$--localization of the evaluation map
$$ \epsilon(X): \Sinfty \Oinfty X \ra X.$$
The construction is immediate: $\eta_n$ is defined by applying $\Phi_n$ to the natural map
$$\eta(\Oinfty X): \Oinfty X \ra Q \Oinfty X.$$

This section is both used and studied in \cite{k3,rezk}.

It seems plausible that $\eta_n$ is the {\em unique} section of $L_{T(n)}\epsilon$.  We have a couple of partial results along these lines.

The first was discussed in \cite{k4}.

\begin{prop} $\eta_n$ is unique up to `tower phantom' behavior in the Goodwillie tower for $\Sinfty \Oinfty$ in the following sense: for all $d$, the composite
$$ L_{T(n)}X \xra{\eta_n(X)} L_{T(n)} \Sinfty \Oinfty X \xra{L_{T(n)}e_d(X)} L_{T(n)}P_d^{\infty}(X)$$
is the unique natural section of $L_{T(n)}P_d^{\infty}(X) \xra{L_{T(n)}p_d(X)} L_{T(n)}X$. \end{prop}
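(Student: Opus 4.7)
The plan is to prove uniqueness by induction on $d$, using the fiber sequences of the Goodwillie tower $\{P_k^{\infty}\}$ for $\Sinfty\Oinfty$ viewed as an endofunctor of $\Sp$.

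For the base case $d=1$, the linearization $P_1^{\infty}$ of $\Sinfty\Oinfty$ is the identity endofunctor of $\Sp$, so $L_{T(n)}p_1$ is the identity, whose unique natural section is the identity itself. Since $L_{T(n)}e_1\circ\eta_n = L_{T(n)}\epsilon\circ\eta_n$ equals the identity by construction of $\eta_n$ as a section of $L_{T(n)}\epsilon$, the claim holds.

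For the inductive step, assume the claim at level $d-1$. Given any natural section $\sigma$ of $L_{T(n)}p_d$, post-composing with the tower projection $L_{T(n)}P_d^{\infty}\ra L_{T(n)}P_{d-1}^{\infty}$ yields a natural section of $L_{T(n)}p_{d-1}$, which by induction equals $L_{T(n)}e_{d-1}\circ\eta_n$. Thus $\sigma$ and the candidate section $L_{T(n)}e_d\circ\eta_n$ agree after projection to level $d-1$, so their difference factors naturally through the fiber of $L_{T(n)}P_d^{\infty}\ra L_{T(n)}P_{d-1}^{\infty}$, producing a natural transformation
$$\delta:\ L_{T(n)}(-)\lra L_{T(n)}D_d(-),$$
where $D_d(X)\simeq(X^{\sm d})_{h\Sigma_d}$ is the $d$-th Goodwillie layer.

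The key step, which I expect to be the main obstacle, is to show that every such natural transformation $\delta$ is null. The plan is to exploit the Goodwillie-degree mismatch: the identity is $1$-homogeneous while $D_d$ is $d$-homogeneous for $d\geq 2$. Naturality of $\delta$ with respect to scalar self-maps $[t]:X\ra X$ for $t\in\pi_0 L_{T(n)}S^0$ gives $D_d([t])\circ\delta=\delta\circ[t]$, and since $D_d([t])$ acts as $t^d$ on $(X^{\sm d})_{h\Sigma_d}$, this collapses to $(t^d-t)\cdot\delta=0$. Producing a scalar $t$ with $t^d-t$ invertible in $\pi_0 L_{T(n)}S^0$ then forces $\delta=0$ and closes the induction. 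The delicate aspect is ensuring such a $t$ exists uniformly in $d$ (the naive choice from $\Z_{(p)}^*$ fails when $d\equiv 1\pmod{p-1}$, so one must pass to the enriched unit group of the $T(n)$-local $0$-stem for $n\geq 1$); equivalently, one may invoke the Goodwillie-calculus classification of natural transformations between polynomial endofunctors of $\Sp$ of distinct homogeneity degrees, which yields the same vanishing.
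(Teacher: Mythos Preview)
Your inductive reduction is correct: uniqueness at level $d$ does reduce to showing that every natural transformation $\delta\colon L_{T(n)}(-)\to L_{T(n)}D_d(-)$ vanishes for $d\ge 2$. The gap is entirely in your proof of this vanishing.

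The scalar argument does not close. You correctly observe that over $\Z_{(p)}$ no $t$ has $t^d-t$ invertible when $(p-1)\mid(d-1)$, but passing to $\pi_0 L_{T(n)}S^0$ does not help: for $n=1$ (where $L_{T(1)}=L_{K(1)}$) this ring is $\Z_p$, whose units reduce mod $p$ to $\F_p^{\times}$, so Fermat's little theorem gives exactly the same obstruction; for general $n\ge 1$ the ring is a $p$-complete $\Z_p$-algebra and one expects no improvement. Your alternative appeal to a ``Goodwillie-calculus classification of natural transformations'' is not justified either: there is no general theorem that natural transformations from a $1$-homogeneous endofunctor of spectra to a $d$-homogeneous one vanish. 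The universal property of $P_1$ controls maps \emph{from} a functor to linear targets, not maps from a linear source into higher-degree targets; the direction you need is the hard one and does not follow from formal calculus.

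The paper itself gives no self-contained argument here; it simply cites the main theorem of \cite{k2}. That result establishes $T(n)$-local Tate vanishing for finite groups acting on spectra, which in particular identifies $L_{T(n)}(X^{\sm d})_{h\Sigma_d}$ with $L_{T(n)}(X^{\sm d})^{h\Sigma_d}$ and yields a canonical natural splitting of the localized Goodwillie tower of $\Sinfty\Oinfty$, from which the uniqueness of the section follows. So the architecture of your argument matches what is needed, but the decisive input is Tate cohomology rather than any scalar or degree-counting trick, and it is not clear how to repair your approach without invoking it.
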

Here $\Sinfty \Oinfty X \xra{e_d(X)} P_d^{\infty}(X)$ is the $d^{th}$ stage of the Goodwillie tower, and $p_d$ is the canonical natural transformation such that $\epsilon = p_d \circ e_d$. The uniqueness asserted in the proposition is an immediate consequence of the main theorem of \cite{k2}.

Our second observation is in the spirit of observations by Rezk in \cite{rezk}.  As usual, we let $QZ$ denote $\Oinfty \Sinfty Z$.

\begin{prop} Any natural transformation $f(X): X \ra L_{T(n)}\Sinfty \Oinfty X$ will be determined by $f(S^0): S^0 \ra L_{T(n)}\Sinfty QS^0$.
\end{prop}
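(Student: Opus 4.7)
My plan is to invoke an enriched Yoneda argument. Both the identity functor $\mathrm{id}_{\Sp}$ and the target functor $F := L_{T(n)}\Sinfty \Oinfty$ are continuous endofunctors of $\Sp$ --- the latter because each of $L_{T(n)}$, $\Sinfty$, and $\Oinfty$ is continuous. The identity functor is represented by the sphere spectrum via $\MapS(S^0, X) \simeq X$, so the $\Sp$--enriched Yoneda lemma identifies the space of continuous natural transformations $\mathrm{id}_{\Sp} \Rightarrow F$ with $F(S^0) = L_{T(n)}\Sinfty QS^0$, via evaluation $f \mapsto f(S^0)$. This is precisely the proposition's assertion.

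To implement this concretely in the homotopy--category setting, I would first use that the target $L_{T(n)}\Sinfty \Oinfty X$ is $T(n)$--local to factor $f(X)$ through the localization map $X \to L_{T(n)}X$, and then invoke the corollary following \thmref{adjoint thm} (that $L_{T(n)}X \simeq L_{T(n)}\Sinfty \Theta_n X$) to reduce to determining $f$ on suspension spectra. For a suspension spectrum $\Sinfty A$, the adjunction $\Sinfty \dashv \Oinfty$ turns $f(\Sinfty A) \colon \Sinfty A \to L_{T(n)}\Sinfty QA$ into an adjoint natural transformation
\[
\tilde{f} \colon \mathrm{id}_{\T} \;\Rightarrow\; \Oinfty L_{T(n)}\Sinfty Q
\]
of continuous endofunctors of $\T$. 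Enriched Yoneda applied in $\T$ (using $\MapT(S^0, A) = A$, so that $\mathrm{id}_{\T}$ is represented by the two--point pointed space $S^0$) identifies such $\tilde{f}$ with a point of $\Oinfty L_{T(n)}\Sinfty QS^0$, and this point corresponds under the adjunction precisely to $f(S^0) \colon S^0 \to L_{T(n)}\Sinfty QS^0$.

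The main obstacle is the continuity hypothesis: the Yoneda argument assumes $f$ is a continuous (equivalently, enriched) natural transformation, whereas the proposition a priori allows any natural transformation at the level of the homotopy category. For the natural transformations that actually arise in applications --- in particular $\eta_n$ itself, obtained by applying the continuous functor $\Phi_n$ to the continuous natural map $\eta \colon \Oinfty X \to Q\Oinfty X$ --- continuity holds by construction and the argument applies verbatim. More robustly, the argument is cleanest when phrased in an $\infty$--categorical framework, where natural transformations between continuous functors between presentable stable $\infty$--categories are automatically detected by their restriction to a generating object such as $S^0$; this is the spirit in which Rezk makes analogous observations in \cite{rezk}.
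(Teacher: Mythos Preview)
Your two--step outline --- reduce to suspension spectra, then reduce to $S^0$ via continuity --- matches the paper's, and your second step is essentially the paper's assembly--map argument in different language: the paper writes down the square
\[
\xymatrix{
Z \sm \Sinfty S^0 \ar[d]^{\wr} \ar[rr]^-{1_Z \sm f(S^0)} && Z \sm L_{T(n)}\Sinfty QS^0 \ar[d]  \\
\Sinfty Z \ar[rr]^-{f(\Sinfty Z)} && L_{T(n)}\Sinfty QZ }
\]
coming from the fact that any continuous $G: \T \to \Sp$ carries natural assembly maps $Z \sm G(W) \to G(Z \sm W)$, natural in $G$. The paper, like you, tacitly needs $f$ to be continuous for that square to commute, so your flagging of this hypothesis is accurate and applies equally to the paper's argument.

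Where your proposal diverges is in the first step. The paper does \emph{not} invoke the $\Theta_n$ corollary; it uses the section $\eta_n$ constructed earlier in the same section. The point is that the counit $\epsilon: \Sinfty \Oinfty X \to X$ is a genuine natural map in $\Sp$, so naturality of the extension $\bar f: L_{T(n)} \Rightarrow F$ yields a commuting square with $L_{T(n)}\epsilon$ on one side; since $L_{T(n)}\epsilon$ has the section $\eta_n(X)$, one reads off
\[
\bar f(X) \;=\; F(\epsilon)\circ \bar f(\Sinfty \Oinfty X)\circ \eta_n(X),
\]
which is determined by $f$ on the suspension spectrum $\Sinfty \Oinfty X$. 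Your proposed route via $L_{T(n)}X \simeq L_{T(n)}\Sinfty \Theta_n(X)$ does not supply a map in $\Sp$ between $X$ and $\Sinfty \Theta_n(X)$ to which naturality of $f$ can be applied. Tracing the Yoneda proof of that corollary produces only a map $\Sinfty \Theta_n(X) \to L_{T(n)}X$, which lets you recover $f(L_{T(n)}X)$ from $f$ on suspension spectra --- but not $f(X)$, because $F(\ell_X): F(X) \to F(L_{T(n)}X)$ is generally not an equivalence (the functor $F = L_{T(n)}\Sinfty \Oinfty$ does not invert $T(n)$--equivalences; e.g.\ think of $\Oinfty X$ versus $\Oinfty L_{T(n)}X$). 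So the reduction to suspension spectra really does need the $\epsilon$/$\eta_n$ trick rather than the $\Theta_n$ equivalence.
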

\begin{proof}  We begin by showing that we can reduce to the case when $X = \Sinfty Z$, a suspension spectrum.  As the range of $f$ is $T(n)$--local, we can extend the domain to $L_{T(n)}X$.  In the diagram
\begin{equation*}
\xymatrix{
L_{T(n)}\Sinfty \Oinfty X \ar[d]^{\epsilon_*} \ar[rr]^-{f(\Sinfty \Oinfty X)} && L_{T(n)}\Sinfty Q\Oinfty X \ar[d]^{\epsilon_*}  \\
L_{T(n)}X \ar[rr]^-{f(X)} && L_{T(n)}\Sinfty \Oinfty X, }
\end{equation*}
the left vertical map has a section given by $f(\eta_n(X))$. Thus the bottom map is determined by the top map.

Next we observe that any continuous functor $G: \T \ra \Sp$ comes with a natural transformation $Z \sm G(W) \ra G(Z \sm W)$, and this structure is natural in $G$.  Applied to our situation,  for any space $Z$, we have a commutative diagram
\begin{equation*}
\xymatrix{
Z \sm \Sinfty S^0 \ar[d]^{\wr} \ar[rr]^-{1_Z \sm f(S^0)} && Z \sm L_{T(n)}\Sinfty QS^0 \ar[d]  \\
\Sinfty Z \ar[rr]^-{f(Z)} && L_{T(n)}\Sinfty QZ. }
\end{equation*}
Thus the bottom map is determined by the top.
\end{proof}

\begin{quest} Is it true that the map
$$ L_{T(n)}\Sinfty QS^0 \ra \prod_d L_{T(n)}\Sinfty B\Sigma_{d+},$$
arising from the James--Hopf maps $QS^0 \ra QB\Sigma_{d+}$,  is monic on $\pi_0$?
\end{quest}

If so, then the last propositions combine to show that $\eta_n$ is the unique natural section to the localized evaluation map.

\section{A guide to computations} \label{computations}

In this section we briefly survey calculations that have been made of $\Phi_n(Z)$ for various pairs $(n,Z)$.  Before jumping into this, we first explain that there is also interest in explicit calculations of $\pi_*(\Phi_n(Z))$, or variants thereof.

\subsection{Periodic homotopy groups of spaces}

Recall that there is a sequence of spectra
$$ F(1) \ra F(2) \ra F(3) \ra \dots $$
such that each $F(k)$ is finite of type $n$ and $\displaystyle \hocolim_k F(k) \simeq C_{n-1}^fS^0$.

Dualizing this in the stable homotopy category, one gets a diagram
$$ \D F(1) \la  \D F(2) \la \D F(3) \la \dots. $$
Furthermore, each $\D F(k)$ comes with a $v_n$--self map which we will generically call `$v$', these are compatible in the usual way, and any given finite part of this data `eventually' desuspends to spaces.  Thus the following definition makes sense.

\begin{defn} The $v_n$--periodic homotopy groups of a space $Z$ are defined to be $$v_n^{-1}\pi_*(Z) = \colim_k v^{-1}\pi_*(Z;\D F(k)).$$
\end{defn}

\begin{ex} When $n=1$, one takes $F(k)$ to be a Moore spectrum of type $Z/p^k$, and the self maps are `Adams maps'.  Using traditional notation, $$v_1^{-1}\pi_*(Z) = \colim_k v^{-1}\pi_{*+1}(Z;\Z/p^k).$$
\end{ex}

\begin{lem} The groups $v_n^{-1}\pi_*(Z)$ can be rewritten in terms of $\Phi_n(Z)$ in various ways:
\begin{equation*}
\begin{split}
v_n^{-1}\pi_*(Z) & = \colim_k \pi_*(\MapS(\D F(k),\Phi_n(Z))) \\
  & = \colim_k \pi_*(F(k) \sm \Phi_n(Z)) \\
  & = \pi_*(C_{n-1}^f\Phi_n(Z)) \\
  & = \pi_*(M_{n}^f\Phi_n(Z)).
\end{split}
\end{equation*}
\end{lem}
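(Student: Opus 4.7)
The plan is to prove the four equalities in order, each by invoking a previously established fact.

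For the first equality, I start from the definition $v_n^{-1}\pi_*(Z) = \colim_k v^{-1}\pi_*(Z;\D F(k))$. Each $\D F(k)$ eventually desuspends to a space $B(k)$ equipped with an unstable $v_n$--self map $v:\Sigma^d B(k) \to B(k)$, and by \lemref{big phi lemma}(a) we have $v^{-1}\pi_*(Z;B(k)) = \pi_*(\Phi_v(Z))$. Property (2) of \thmref{main thm} then supplies a natural weak equivalence $\MapS(B(k),\Phi_n(Z)) \simeq \Phi_v(Z)$, so $\pi_*(\MapS(\D F(k),\Phi_n(Z)))$ agrees (up to the periodic reindexing built into passing between $\D F(k)$ and $B(k)$) with $v^{-1}\pi_*(Z;\D F(k))$.

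For the second equality, I use Spanier--Whitehead duality: since each $F(k)$ is a finite spectrum, there is a natural equivalence $\MapS(\D F(k),\Phi_n(Z)) \simeq F(k) \sm \Phi_n(Z)$, giving the desired identification of homotopy groups.

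For the third equality, I invoke the fact (recorded just after \propref{resolution prop}, following Bousfield and Mahowald--Sadofsky) that $\hocolim_k F(k) \simeq C_{n-1}^f S^0$. Because $\pi_*$ commutes with sequential homotopy colimits, and because $C_{n-1}^f$ is smashing, I get
\begin{equation*}
\colim_k \pi_*(F(k) \sm \Phi_n(Z)) \simeq \pi_*\bigl(C_{n-1}^f S^0 \sm \Phi_n(Z)\bigr) \simeq \pi_*\bigl(C_{n-1}^f \Phi_n(Z)\bigr).
\end{equation*}
For the fourth equality, I apply the natural equivalence $C_{n-1}^f L_{T(n)} X \simeq M_n^f L_{T(n)} X \simeq M_n^f X$ recalled in the paragraph preceding the alternative proof of \propref{resolution prop}; since $\Phi_n(Z)$ is $T(n)$--local by \thmref{main thm}(1), setting $X = \Phi_n(Z)$ yields $C_{n-1}^f \Phi_n(Z) \simeq M_n^f \Phi_n(Z)$.

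I do not expect any genuine obstacle here — every link in the chain is a direct application of a result already assembled in the paper. The one bookkeeping point that deserves care is the first equality: one must check that the shifts coming from desuspending $\D F(k)$ to a space $B(k)$ and from the periodicity of $\Phi_v$ match those implicit in the colimit defining $v_n^{-1}\pi_*(Z)$, so that passing to the colimit over $k$ really does absorb the reindexing uniformly. This is routine once one remembers that $\Phi_v(Z)$ is a $d$--periodic spectrum (\corref{phiv cor}).
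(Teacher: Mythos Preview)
Your argument is correct. The paper states this lemma without proof, treating it as an immediate consequence of the surrounding material; your chain of identifications---\thmref{main thm}(2) and \lemref{big phi lemma}(a) for the first equality, $S$--duality for the second, $\hocolim_k F(k)\simeq C_{n-1}^f S^0$ together with smashing for the third, and the relation $C_{n-1}^f L_{T(n)}X\simeq M_n^f X$ with \thmref{main thm}(1) for the fourth---is precisely the intended unpacking.
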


The next lemma relates $\Phi_n$--equivalences to isomorphisms on localized homotopy groups.

\begin{lem} \label{v-equiv lemma} Given a map $f: W \ra Z$ between spaces, the following conditions are equivalent. \\

\noindent{\bf (a)} $\Phi_n(f): \Phi_n(W) \ra \Phi_n(Z)$ is a weak equivalence. \\

\noindent{\bf (b)} $f_*: v_n^{-1}\pi_*(W) \ra v_n^{-1}\pi_*(Z)$ is an isomorphism. \\

\noindent{\bf (c)} $f_*: v^{-1}\pi_*(W;B) \ra v^{-1}\pi_*(Z;B)$ is an isomorphism for all unstable $v_n$--self maps $v: \Sigma^d B \ra B$. \\

\noindent{\bf (d)} $f_*: v^{-1}\pi_*(W;B) \ra v^{-1}\pi_*(Z;B)$ is an isomorphism for some unstable $v_n$--self map $v: \Sigma^d B \ra B$.
\end{lem}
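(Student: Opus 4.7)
The plan is to establish the cycle $(a) \Rightarrow (c) \Rightarrow (d) \Rightarrow (a)$, with $(a) \Leftrightarrow (b)$ handled separately via the preceding lemma.

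For $(a) \Rightarrow (c)$: Property (2) of \thmref{main thm} provides a natural weak equivalence $\Map_{\Sp}(B,\Phi_n(Z)) \simeq \Phi_v(Z)$ for every unstable $v_n$--self map $v:\Sigma^d B \ra B$. Applying $\Map_{\Sp}(B,-)$ to the weak equivalence $\Phi_n(f)$ therefore yields a weak equivalence $\Phi_v(W) \ra \Phi_v(Z)$, which on $\pi_*$ becomes the desired isomorphism by \lemref{big phi lemma}(a). The implication $(c) \Rightarrow (d)$ is immediate.

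For $(d) \Rightarrow (a)$: Since $\Phi_n(W)$ and $\Phi_n(Z)$ are both $T(n)$--local by property (1) of \thmref{main thm}, it suffices to show that $\Phi_n(f)$ is a $T(n)$--equivalence. Model $T(n)$ as the mapping telescope of the $S$--dual $\D v$ of $v$ acting on $\D(\Sinfty B)$ (a finite spectrum of type $n$). Using Spanier--Whitehead duality $\MapS(\Sinfty B, X) \simeq \D(\Sinfty B) \wedge X$ together with property (2), one identifies $T(n) \wedge \Phi_n(Z)$, up to suspension shift, with the telescope of $\MapS(\Sinfty B,\Phi_n(Z))$ under iterated $v^*$; by property (2) this target is $\Phi_v(Z)$ itself, which is already $v$--periodic (\corref{phiv cor}). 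Condition $(d)$, interpreted through \lemref{big phi lemma}(a), asserts precisely that $\Phi_v(f)$ is a stable equivalence. Hence $T(n) \wedge \Phi_n(f)$ is a weak equivalence, completing the argument.

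For $(a) \Leftrightarrow (b)$: By the preceding lemma, $v_n^{-1}\pi_*(-) \cong \pi_*(M_n^f \Phi_n(-))$, so $(a) \Rightarrow (b)$ is immediate. Conversely, $(b)$ forces $M_n^f\Phi_n(f)$ to be a stable equivalence; applying $L_{T(n)}$ and using the natural equivalence $L_{T(n)}M_n^f \simeq L_{T(n)}$ recorded in \secref{background}, together with the $T(n)$--locality of $\Phi_n(W)$ and $\Phi_n(Z)$, yields that $\Phi_n(f)$ is itself a weak equivalence.

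The main obstacle is the bookkeeping in $(d) \Rightarrow (a)$: one must carefully match the action of $v$ on $\MapS(\Sinfty B,-)$ after Spanier--Whitehead duality with the dualized $v_n$--self map used to build $T(n)$, and track the suspension shifts. A cleaner alternative would be to apply the Thick Subcategory Theorem directly: show that $\{F \in \C_n : \MapS(F,\Phi_n(f)) \text{ is an equivalence}\}$ is a thick subcategory of $\C_n$ which contains $\Sinfty B$ by hypothesis $(d)$, hence equals all of $\C_n$, after which smashing with any type $n$ finite spectrum and inverting its $v_n$--self map converts this into the desired $T(n)$--equivalence.
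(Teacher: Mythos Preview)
Your argument is correct and follows essentially the same route as the paper's proof. The paper is simply more compressed: it sets $g = \Phi_n(f)$, rephrases each of (a)--(d) as a statement about $g$ (namely: $g$ is an equivalence; $M_n^f g$ is an equivalence; $\D B \sm g$ is an equivalence for all/some type $n$ $B$), and then observes that for a map $g$ between $T(n)$--local spectra these are all equivalent, using $L_{T(n)}M_n^f g \simeq g$ for (b)$\Rightarrow$(a) and the identification of $T(n)$ with $v^{-1}\D B$ for (d)$\Rightarrow$(a). Your explicit invocation of property (2) of \thmref{main thm} and the Spanier--Whitehead bookkeeping unpacks exactly this; the Thick Subcategory alternative you mention is unnecessary, since the single $B$ in (d) already produces a $T(n)_*$--isomorphism directly.
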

\begin{proof} Let $g = \Phi_n(f)$.  Then condition (a) says that $g$ is an equivalence, (b) says that $M_n^fg$ is an equivalence, and conditions (c) and (d) say that $\D B \sm g$ is an equivalence for appropriate $B$'s.  As $g$ is a map between $T(n)$--local spectra, the three conditions are all equivalent: clearly (a) implies all the other statements, $L_{T(n)}M_n^f g \simeq g$ so that (b) implies (a), (c) obviously implies (d), and finally (d) implies that $v^{-1}\D B \sm g$ is an equivalence, so that $g$ is a $T(n)_*$--isomorphism and thus (a) holds.
\end{proof}

\subsection{Basic observations}

From properties of $\Phi_v$ listed in \lemref{big phi lemma}, one deduces the next two useful basic calculational rules.

\begin{lem} $\MapS(A,\Phi_n(Z)) \simeq \Phi_n(\MapT(A,Z))$ for all $A,Z \in \T$.
\end{lem}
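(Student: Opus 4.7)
The plan is to deduce this equivalence by invoking the characterization part of Theorem~\ref{main thm}: up to natural weak equivalence, $\Phi_n$ is the unique continuous functor $\T \ra \Sp$ satisfying properties (1) and (2). Writing $\Psi(Z) := \MapS(A, \Phi_n(Z))$ and $H(Z) := \Phi_n(\MapT(A,Z))$, I would show that $\Psi$ satisfies the same two properties that characterize $\Phi_n$ applied to the argument $\MapT(A,Z)$, and then conclude $\Psi(Z) \simeq H(Z)$ naturally.

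For property (1), I need $\Psi(Z)$ to be $T(n)$--local. I would use \lemref{Tn local lem}: since $[W, \MapS(A,X)] \cong [W\sm A, X]$ for any spectrum $W$ and any $X$, it suffices to show that smashing with the space $A$ preserves both $L_n^f$--acyclicity and $F(n)$--acyclicity. For the former, if $W$ has type $n+1$, then $T(m) \sm W \sm A \simeq (T(m)\sm W)\sm A \simeq *$ for $0\leq m \leq n$, so $W \sm A$ is $L_n^f$--acyclic; combined with $L_n^f$--locality of $\Phi_n(Z)$, this gives $[W,\Psi(Z)]=0$. For the latter, $F(n)\sm W \simeq *$ implies $F(n)\sm W \sm A \simeq *$, and $F(n)$--locality of $\Phi_n(Z)$ gives the required vanishing.

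For property (2), given an unstable $v_n$--self map $v:\Sigma^d B \ra B$, I would chain the natural equivalences
\begin{equation*}
\MapS(B, \Psi(Z)) = \MapS(B, \MapS(A, \Phi_n(Z))) \simeq \MapS(A, \MapS(B, \Phi_n(Z))) \simeq \MapS(A, \Phi_v(Z)) \simeq \Phi_v(\MapT(A,Z)),
\end{equation*}
where the first equivalence is the exponential law for the cotensor, the second is property (2) of \thmref{main thm}, and the third is \lemref{big phi lemma}(d). All identifications are natural in $Z$, $v$, and $A$.

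Combining these two verifications, the characterization clause of \thmref{main thm} (with $Z$ replaced by $\MapT(A,Z)$) yields a natural weak equivalence $\Psi(Z) \simeq H(Z)$, and naturality in $A$ is inherited from the functoriality in $A$ of each step above. I do not expect a serious obstacle; the only points that need care are the locality check for arbitrary (not necessarily finite) $A$ and the bookkeeping to ensure that the chain of equivalences in property~(2) is natural in $v$ as well as in $Z$ and $A$, so that the hypotheses of the characterization statement are satisfied on the nose.
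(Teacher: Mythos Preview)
Your argument is correct, but it takes a different route from the paper. The paper deduces the lemma directly from the construction of $\Phi_n$ together with \lemref{big phi lemma}(d): since $\Phi_n(Z)=\holim_k \Sigma^{t(k)}\Phi_{v(k)}(Z)$ and $\MapS(A,-)$ commutes with homotopy limits and, up to natural equivalence, with $\Sigma^{t(k)}$, one obtains
\[
\MapS(A,\Phi_n(Z))\simeq \holim_k \Sigma^{t(k)}\MapS(A,\Phi_{v(k)}(Z)) = \holim_k \Sigma^{t(k)}\Phi_{v(k)}(\MapT(A,Z)) = \Phi_n(\MapT(A,Z)).
\]
Your approach via the characterization clause of \thmref{main thm} is legitimate but needs a small caveat: the characterization, as stated, concerns functors $\Phi'_n$ with $\MapS(B,\Phi'_n(Z))\simeq\Phi_v(Z)$, whereas you verify $\MapS(B,\Psi(Z))\simeq\Phi_v(\MapT(A,Z))$. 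What you are really doing is re-running the \emph{proof} of the characterization, which works because that proof is pointwise: any $T(n)$--local spectrum $X$ with $\MapS(B(k),X)\simeq\Phi_{v(k)}(W)$ compatibly in $k$ satisfies $X\simeq\Phi_n(W)$. Your phrase ``with $Z$ replaced by $\MapT(A,Z)$'' should be read in this sense. Also, your locality check is more elaborate than necessary: $\MapS(A,-)$ preserves $T(n)$--local objects for arbitrary $A$ simply because it is right adjoint to $A\sm(-)$, which preserves $T(n)_*$--equivalences. The paper's direct route is shorter and stays closer to the definitions; your route has the modest advantage of not unpacking the rigidified tower.
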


\begin{lem} $\Phi_n$ takes homotopy pullbacks in $\T$ to homotopy pullbacks in $\Sp$.
\end{lem}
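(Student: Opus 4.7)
The plan is to reduce the statement to the corresponding property for $\Phi_v$, which is already packaged in \lemref{big phi lemma}(e), and then to observe that the remaining operations used to build $\Phi_n$ out of the $\Phi_v$'s all preserve homotopy pullbacks.

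First I would unwind the definition \ref{Phi defn}, which (informally) gives
$$\Phi_n(Z) \simeq \holim_k \Sigma^{t(k)}\Phi_{v(k)}(Z),$$
realized on the nose as the homotopy limit of the zig-zag diagram displayed in \ref{Phi defn}. Given a homotopy pullback square of spaces
$$
\xymatrix{
W \ar[r] \ar[d] & Y \ar[d] \\
Y' \ar[r] & Z, }
$$
I would apply each functor $\Phi_{v(k)}$ to it. By \lemref{big phi lemma}(e), the result is a level homotopy pullback of spectra, hence a stable homotopy pullback.

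Next, suspending by $\Sigma^{t(k)}$ (and the levelwise equivalences $\Sigma^{t(k)}\Phi_{v(k)}(Z)\to s^{t(k)}\Phi_{v(k)}(Z)$ appearing in the zig-zag) preserves homotopy pullbacks: suspension and shift are invertible up to stable equivalence, and in particular pullback-preserving, on $\Sp$. So each spectrum in the zig-zag defining $\Phi_n$ sits in a homotopy pullback square, naturally in $W,Y,Y',Z$. Finally, $\holim_k$ commutes with finite homotopy limits (homotopy limits commute with homotopy limits), so applying $\holim_k$ to the levelwise-pullback of zig-zags yields the desired homotopy pullback square
$$
\xymatrix{
\Phi_n(W) \ar[r] \ar[d] & \Phi_n(Y) \ar[d] \\
\Phi_n(Y') \ar[r] & \Phi_n(Z) }
$$
in $\Sp$.

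There is no serious obstacle; the only thing worth a sentence of care is the compatibility with the zig-zags (the vertical equivalences of the form $\Phi_{v(k)^{r(k)}}(Z)\simeq \Phi_{v(k)}(Z)$ coming from \lemref{big phi lemma}(i), and the shift-vs-suspension comparison maps), but these are all instances of natural stable equivalences between pullback-preserving functors, so their formation commutes with homotopy pullbacks automatically.
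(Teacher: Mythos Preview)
Your argument is correct and is exactly the intended elaboration of the paper's one-line justification: the paper simply says that this lemma (together with the preceding one) is deduced from the properties of $\Phi_v$ listed in \lemref{big phi lemma}, and your proof spells out precisely how---use \lemref{big phi lemma}(e) for each $\Phi_{v(k)}$, then observe that $\Sigma^{t(k)}$ and $\holim_k$ preserve homotopy pullbacks in $\Sp$.
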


One might wonder to what extent $\Phi_n$ might take the homotopy limit (`microscope') of a sequence $ Z_1 \la Z_2 \la Z_3 \la \dots$ to the corresponding holimit in $\Sp$.  Unfortunately this will not always be the case; the correct statement can be formally deduced from \thmref{adjoint thm}.

\begin{lem} Given a sequence of spaces $ Z_1 \la Z_2 \la Z_3 \la \dots$, we have
$$ \holim_k \Phi_n(Z_k) \simeq \Phi_n(\holim_k L_n^f Z_k).$$
\end{lem}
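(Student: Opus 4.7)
The plan is to deduce this formally from \thmref{adjoint thm}, after first reducing the problem to the case where each $Z_k$ is already $L_n^f$--local. Since $\Phi_n$ is built as a homotopy limit of functors $\Phi_{v(k)}$, and each of these is invariant under $L_n^f$ by \propref{Ln prop}, the natural map $\Phi_n(Z_k) \ra \Phi_n(L_n^fZ_k)$ is a stable equivalence. Hence $\holim_k \Phi_n(Z_k) \simeq \holim_k \Phi_n(L_n^fZ_k)$, and it suffices to prove the cleaner statement $\Phi_n(\holim_k W_k) \simeq \holim_k \Phi_n(W_k)$ where $W_k = L_n^fZ_k$.

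Next I would observe that the full subcategory of $L_n^f$--local spaces in $\T$ is the right orthogonal of the set of localizing maps and is therefore closed under all homotopy limits. In particular, $W := \holim_k W_k$ satisfies $L_n^fW \simeq W$. There is a canonical natural map $\Phi_n(W) \ra \holim_k \Phi_n(W_k)$ arising from the projections $W \ra W_j$ and the functoriality of $\Phi_n$, and the remaining task is to show this map is a weak equivalence of spectra.

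To verify this, I would test against an arbitrary $X \in \Sp$ and use \thmref{adjoint thm}. Upgrading that adjunction to derived mapping spaces (using that it comes from the Quillen pair of \lemref{adjoint lemma}), one computes
\begin{equation*}
\MapS(X,\Phi_n(W)) \simeq \MapT(\Theta_n(X),L_n^fW) \simeq \MapT(\Theta_n(X),W) \simeq \holim_k \MapT(\Theta_n(X),W_k) \simeq \holim_k \MapS(X,\Phi_n(W_k)) \simeq \MapS(X,\holim_k \Phi_n(W_k)),
\end{equation*}
where the middle equivalence commutes $\MapT(\Theta_n(X),-)$ past the homotopy limit. A diagram chase verifies that this composite agrees with the one induced by the canonical natural map above, so by Yoneda that map is a weak equivalence.

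The main obstacle is the mapping-space upgrade of \thmref{adjoint thm}: the theorem as stated is only a bijection of $[-,-]$ sets, and a naive application of $[X,-]$ and $[\Theta_n(X),-]$ past a homotopy limit risks $\lim^1$ obstructions. Working with derived mapping spaces (which is legitimate because $\Theta_v \dashv \Phi_v$ is a genuine Quillen pair after $L_v$--localization, and $\Theta_n$ and $\Phi_n$ are built by appropriate (co)limits of these) bypasses this issue cleanly. A secondary technical nuisance is that $\Phi_n$ is defined as a homotopy limit indexed by the rigidification data, so preserving the homotopy limit in the $Z$--variable requires commuting two homotopy limits; this is automatic, but worth a sentence of justification.
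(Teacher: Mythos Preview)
Your approach is correct and is exactly the ``formal deduction from \thmref{adjoint thm}'' that the paper asserts without giving further detail. One small caveat: your justification for the mapping-space upgrade appeals to $\Theta_n$ and $\Phi_n$ being assembled from Quillen pairs, but the remark immediately following \thmref{adjoint thm} explicitly notes that $\Phi_n$ is \emph{not} the right half of an adjoint pair (the $\Sigma^{t(k)}$'s obstruct this); the upgrade is nonetheless valid, since the large commutative diagram in that proof exhibits the adjunction at the level of mapping spaces before passing to $\pi_0$, so your argument goes through once you cite that instead.
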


Since $\Phi_n(\holim_k Z_k) \simeq \Phi_n(L_n^f \holim_k Z_k)$, one sees that the failure of $\Phi_n$ to commute with microscopes is caused by the failure of $L_n^f: \T \ra \T$ to commute with microscopes.

More constructively, one has the following consequence of \lemref{v-equiv lemma}.

\begin{lem} \label{convergence lemma} Given a sequence of spaces $ Z_1 \la Z_2 \la Z_3 \la \dots$, the natural map
$$\Phi_n(\holim_k Z_k) \ra \holim_k \Phi_n(Z_k)$$
is an equivalence if and only if
$$ v^{-1}\lim_k \pi_*(Z_k;B) \ra \lim_k v^{-1}\pi_*(Z_k;B)$$
is an isomorphism for some unstable $v_n$--self map $v: \Sigma^d B \ra B$.
\end{lem}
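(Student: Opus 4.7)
My plan is to combine Property (2) of $\Phi_n$ from \thmref{main thm} with the $T(n)_*$-isomorphism criterion extractable from the proof of \lemref{Tn local lem}, much as in \lemref{v-equiv lemma}, and then compare Milnor $\lim/\lim^1$ short exact sequences on the two sides.

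Both $\Phi_n(\holim_k Z_k)$ and $\holim_k \Phi_n(Z_k)$ are $T(n)$-local: the first by Property (1), the second as a homotopy limit of $T(n)$-local spectra. A map between $T(n)$-local spectra is a weak equivalence iff $\MapS(B,-)$ applied to it is, where $v:\Sigma^d B \to B$ is any chosen unstable $v_n$-self map. By Property (2), $\MapS(B,\Phi_n(Z)) \simeq \Phi_v(Z)$; since $\MapS(B,-)$ preserves homotopy limits, the comparison map identifies with the natural map
\[ \Phi_v\bigl(\holim_k Z_k\bigr) \longrightarrow \holim_k \Phi_v(Z_k). \]

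Next, I compute $\pi_*$ on each side via the Milnor sequence. On the left, since $v^{-1}$ is exact, applying it to the Milnor SES for $[B,\holim_k Z_k]_*$ gives a SES with kernel $v^{-1}\lim^1_k [B,Z_k]_{*+1}$ and quotient $v^{-1}\lim_k [B,Z_k]_*$. On the right, the Milnor SES for a homotopy limit of spectra gives a SES with kernel $\lim^1_k v^{-1}[B,Z_k]_{*+1}$ and quotient $\lim_k v^{-1}[B,Z_k]_*$. The natural comparison respects both sequences, and by the five-lemma the middle map is an isomorphism iff both the quotient comparison (which is precisely condition (A) of the lemma) and the parallel $\lim^1$ comparison are isomorphisms.

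The forward direction (weak equivalence forces (A)) is immediate from the quotient half of the SES. The main obstacle is the converse: showing that condition (A) alone controls the parallel $\lim^1$ comparison. I expect this to follow from the $v$-periodicity of the right-hand tower $\{v^{-1}[B,Z_k]_*\}$ (on which $v$ acts invertibly), via a snake-lemma argument comparing $v^{-1}\prod_k M_k$ with $\prod_k v^{-1}M_k$ (for $M_k = [B,Z_k]_*$): the discrepancies governing $\lim$ and $\lim^1$ turn out to be controlled by the same kernel/cokernel data on the quotient of these two products, so (A) being iso forces the $\lim^1$ comparison to be iso as well. This algebraic point is the heart of the argument.
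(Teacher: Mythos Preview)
The paper gives no proof at all: it simply states the lemma as ``a consequence of \lemref{v-equiv lemma}'' and moves on. Your reduction to $\Phi_v$ via Property~(2) of \thmref{main thm}, followed by a comparison of Milnor $\lim/\lim^1$ sequences, is precisely the right way to try to make that one-line assertion honest, and is already more careful than what the paper offers.

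That said, there is a genuine gap you have not closed, and it affects \emph{both} directions. Your claim that the forward implication is ``immediate from the quotient half of the SES'' is not correct: if the middle map in a morphism of short exact sequences is an isomorphism, the snake lemma only tells you that the quotient comparison $v^{-1}\lim\to\lim v^{-1}$ is surjective and the $\lim^1$ comparison is injective, with the kernel of the former isomorphic to the cokernel of the latter. Nothing forces that common group to vanish. For the converse, your sketch that the $\lim$ and $\lim^1$ discrepancies are ``controlled by the same kernel/cokernel data'' on $v^{-1}\prod M_k\to\prod v^{-1}M_k$ is true as far as it goes, but running the two snake lemmas shows that, assuming $\alpha$ is an isomorphism, $\ker\beta$ and $\coker\beta$ are computed as the cokernel of the shift map $1-s$ acting on $\ker\phi$ and $\coker\phi$ respectively; there is no mechanism making these vanish. (For a toy illustration that $\alpha$ need not even be injective, take $M_k=\mathbb Z[v]/(v^k)$ with the quotient tower: then $\lim M_k=\mathbb Z[[v]]$ so $v^{-1}\lim M_k=\mathbb Z((v))$, while each $v^{-1}M_k=0$.)

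The honest conclusion is that the lemma as literally written is imprecise: a clean statement either replaces the displayed condition by the tautology that $\pi_*\Phi_v(\holim_k Z_k)\to\pi_*\holim_k\Phi_v(Z_k)$ is an isomorphism, or supplements the $\lim$ condition with the parallel $\lim^1$ condition. In the paper's sole application (the Arone--Mahowald tower for an odd sphere), the towers of homotopy groups stabilize in each degree, so all $\lim^1$ terms vanish and the distinction evaporates; this is presumably why the paper felt free to be casual here.
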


Since $\Phi_n(Z)$ can be `calculated' as $L_{T(n)}X$ if $Z = \Oinfty X$, the following strategy for computing $\Phi_n(Z)$ emerges:  try to `resolve' $Z$ by towers of fibrations with fibers which are infinite loopspaces, and hope that \lemref{convergence lemma} can be applied when needed.

\subsection{$\Phi_n(S^m)$ when $m$ is odd}

The strategy just described was implimented in
beautiful work of Arone and Mahowald \cite{aronemahowald} on the Goodwillie tower of the identity functor. It allows for the identification of a short resolution of $\Phi_n(Z)$ with `known' composition factors, when $Z$ is an odd dimensional sphere.  We will be brief here; for a slightly different overview of how this goes, see the last sections of our survey paper \cite{k4}.

We need some notation. Let $m \rho_k$ denote the direct sum of $m$ copies of the reduced real regular representation of $V_k = (\mathbb Z/p)^k$.  Then $GL_k(\mathbb Z/p)$ acts on the Thom space $BV_k^{m\rho_k}$.  Let $e_k \in \mathbb Z_{(p)}[GL_k(\mathbb Z/p)]$ be any idempotent in the group ring representing the Steinberg module, and then let $L(k,m)$ be the associated stable summand of  $BV_k^{m\rho_k}$:
$$ L(k,m) = e_k \Sinfty BV_k^{m\rho_k}.$$
The spectra $L(k,0)$ and $L(k,1)$ agree with spectra called $M(k)$ and $L(k)$ in the literature from the early 1980's: see e.g. \cite{mitchellpriddy,kuhnpriddy}.  Two properties of the $L(k,m)$ play a crucial roles for our purposes:
\begin{itemize}
\item When $m$ is odd, the cohomology  $H^*(L(k,m); \mathbb Z/p)$ is free over the finite subalgebra $\A(k-1)$ of the Steenrod algebra $\A$.
\item Fixing $m$, the connectivity of $L(k,m)$ has a growth rate like $p^k$.
\end{itemize}
The first fact here implies that $L(k,m)$ is $T(n)_*$--acylic for $k>n$.  Indeed, the $E_2$--term of the Adams spectral sequence which computes $[B,L(k,m)]_*$ for any finite $B$ will have a vanishing line of small enough slope so that one can immediately deduce that $v^{-1}E_2 = 0$ if $v$ is a $v_n$--self map of $B$.

Arone and Mahowald's analysis in \cite{aronemahowald}, supported by \cite{aronedwyer}, shows that, for odd $m$, there is a tower of fibrations under the $p$--local sphere $S^m$:
\begin{equation*}
\xymatrix{
&&& \vdots \ar[d] \\
&&& R_2(S^m) \ar[d]^{p_2} \\
&&& R_1(S^m) \ar[d]^{p_1} \\
S^m \ar[rrr]^{e_0}  \ar[urrr]^{e_1} \ar[uurrr]^{e_2} &&& R_0(S^m),
}
\end{equation*}
such that $\displaystyle S^m \simeq \holim_k R_k(S^m)$, $R_0(S^m) = QS^m$, and, for $k \geq 1$, the fiber of $p_k$ is equivalent to $\Oinfty \Sigma^{m-k}L(k,m)$.  (The space $R_k(S^m)$ is the $p^k$th stage of the Goodwillie tower of the identity functor applied to $S^m$.)

Using the two properties of the $L(k,m)$ bulleted above, Arone and Mahowald then deduce that
$$ v^{-1}\lim_k \pi_*(R_k(S^m);B) \ra \lim_k v^{-1}\pi_*(R_k(S^m);B)$$
is an isomorphism for any self map $v: \Sigma^d B \ra B$. See \cite[\S 4.1]{aronemahowald}. It follows that
$$ e_{n*}: v^{-1}\pi_*(S^m;B) \ra v^{-1}\pi_*(R_n(S^m);B)$$
is an isomorphism for any $v_n$--self map $v: \Sigma^d B \ra B$.  One deduces the following about $\Phi_n(S^m)$.

\begin{thm}[see {\cite[Theorem 7.18]{k4}}] \label{phi theorem}  Let $m$ be odd.  The map
$$ \Phi_n(e_n): \Phi_n(S^m) \ra \Phi_n(R_n(S^m))$$
is an equivalence.  Thus the spectrum $\Phi_n(S^m)$ admits a finite decreasing filtration with fibers $L_{T(n)}\Sigma^{m-k}L(k,m)$ for $k = 0, \dots, n$.
\end{thm}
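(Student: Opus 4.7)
The plan splits into two steps, matching the two assertions of the theorem.

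First, the plan is to show that $\Phi_n(e_n)$ is a weak equivalence by reducing the question to periodic homotopy groups. By \lemref{v-equiv lemma}, condition (d), it suffices to exhibit a single unstable $v_n$--self map $v: \Sigma^d B \ra B$ for which $e_{n*}: v^{-1}\pi_*(S^m;B) \ra v^{-1}\pi_*(R_n(S^m);B)$ is an isomorphism. This is exactly what the Arone--Mahowald analysis recalled in the paragraph preceding the theorem delivers: combining the $\A(k-1)$--freeness of $H^*(L(k,m);\F_p)$ (which, via a vanishing-line argument for the relevant Adams $E_2$--term, forces $v^{-1}$ to annihilate maps from a finite CW spectrum of type $n$ into $L(k,m)$ whenever $k>n$) with the $p^k$--growth of the connectivity of $L(k,m)$, one obtains the microscope/colimit interchange
$$ v^{-1}\lim_k \pi_*(R_k(S^m);B) \;\xra{\sim}\; \lim_k v^{-1}\pi_*(R_k(S^m);B), $$
and only the terms with $k \leq n$ survive on the right.

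For the second assertion, the plan is to exploit that $R_n(S^m)$ sits atop a \emph{finite} tower of principal fibrations
$$ R_n(S^m) \ra R_{n-1}(S^m) \ra \dots \ra R_0(S^m) = QS^m, $$
whose $k$--th stage has fiber $\Oinfty \Sigma^{m-k}L(k,m)$. Applying $\Phi_n$ and invoking its preservation of homotopy pullbacks (a basic observation recorded in \secref{computations}), each stage yields a homotopy fibration sequence of spectra
$$ \Phi_n(\Oinfty \Sigma^{m-k}L(k,m)) \ra \Phi_n(R_k(S^m)) \ra \Phi_n(R_{k-1}(S^m)). $$
Property {\bf (3)} of \thmref{main thm} identifies the fiber as $L_{T(n)}\Sigma^{m-k}L(k,m)$, and the base is $\Phi_n(QS^m) = \Phi_n(\Oinfty \Sinfty S^m) \simeq L_{T(n)}\Sinfty S^m = L_{T(n)}\Sigma^m L(0,m)$ (using the convention $L(0,m) \simeq \Sinfty S^0$ coming from $BV_0 = *$). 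Splicing these $n+1$ fibration sequences gives the desired decreasing filtration on $\Phi_n(R_n(S^m))$, which the first step then transports along $\Phi_n(e_n)$ to a filtration on $\Phi_n(S^m)$.

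The main obstacle is not internal to this paper: it is the Arone--Mahowald convergence theorem itself, which supplies the interchange of $v^{-1}$ with the microscope and which must be imported wholesale. Once that deep input is granted, every remaining step is formal, drawing only on properties of $\Phi_n$ already established in the excerpt --- namely preservation of homotopy pullbacks, the infinite--loop--space formula $\Phi_n(\Oinfty X)\simeq L_{T(n)}X$, and the periodic--equivalence criterion of \lemref{v-equiv lemma}.
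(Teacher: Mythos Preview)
Your proposal is correct and follows essentially the same approach as the paper. The paper does not isolate a formal proof block for this theorem; instead, the argument is laid out in the paragraphs immediately preceding the statement, and you have reconstructed it faithfully: invoke the Arone--Mahowald interchange isomorphism together with the $T(n)_*$--acyclicity of $L(k,m)$ for $k>n$ to see that $e_{n*}$ is a $v^{-1}\pi_*$--isomorphism, apply \lemref{v-equiv lemma} to conclude $\Phi_n(e_n)$ is an equivalence, and then read off the filtration from the finite tower using preservation of homotopy pullbacks and property {\bf (3)} of \thmref{main thm}.
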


With a little diagram chasing, one can do better than this.  Let  $L(k)_1^{m-1}$ be the fiber of the natural map of spectra $L(k,1) \ra L(k,m)$.  The fibration sequence
of spectra $$ L(k)_1^{m-1} \ra L(k,1) \ra L(k,m)$$
induces a short exact sequence in mod $p$ cohomology, and is thus split as $\A(k-1)$--modules.  By applying $\Phi_n$ to the fiber sequence $\Omega_0^m S^m \ra S^1 \ra \Omega^{m-1}S^m$ and applying the previous theorem, one deduces an improved result.

\begin{thm}[{\cite[Theorem 7.20]{k4}}] \label{phi theorem 2}  Let $m$ be odd.  The spectrum $\Phi_n(S^m)$ admits a finite decreasing filtration with fibers $L_{T(n)}\Sigma^{m+1-k}L(k)_1^{m-1}$ for $k = 1, \dots, n$.
\end{thm}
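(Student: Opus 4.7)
The plan is to deduce this from \thmref{phi theorem} by comparing, via the unit map $S^1 \to \Omega^{m-1}S^m$ (the adjoint of the identity on $S^m$), the Arone--Mahowald filtrations of $\Phi_n(S^1)$ and of $\Phi_n(\Omega^{m-1}S^m) \simeq \Omega^{m-1}\Phi_n(S^m)$, and then taking fibers of the comparison map.

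First I would check that $\Phi_n(S^1) \simeq *$ for $n \geq 1$: for any finite type-$n$ complex $B$ with $v_n$--self map $v: \Sigma^d B \to B$, $v^{-1}\pi_*(S^1; B) = v^{-1}H^{1-*}(B;\Z) = 0$, since $v^*$ raises $*$ by $d > 0$, sending the single nonzero dimension to zero groups. A long-exact-sequence check verifies that $\Omega_0^m S^m \to S^1 \to \Omega^{m-1}S^m$ is indeed a fiber sequence. Since $\Phi_n$ preserves homotopy pullbacks (being assembled from the functors $\Phi_v$, each of which does by \lemref{big phi lemma}(e)), applying it yields a fiber sequence of spectra
$$\Omega^m \Phi_n(S^m) \to \Phi_n(S^1) \to \Omega^{m-1}\Phi_n(S^m).$$

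Second, \thmref{phi theorem} supplies Arone--Mahowald filtrations on $\Phi_n(S^1)$ (with $k$-th graded piece $L_{T(n)}\Sigma^{1-k}L(k,1)$) and on $\Omega^{m-1}\Phi_n(S^m)$ (with $k$-th graded piece $L_{T(n)}\Sigma^{1-k}L(k,m)$), for $k = 0, \ldots, n$. By naturality of the Goodwillie tower of the identity, combined with the Arone--Dwyer \cite{aronedwyer} description of its layers, the map induced by $S^1 \to \Omega^{m-1}S^m$ respects these filtrations and induces on the $k$-th graded piece the map $L_{T(n)}\Sigma^{1-k}(L(k,1) \to L(k,m))$ arising from the inclusion $\rho_k \hookrightarrow m\rho_k$ after passing to Thom spaces and extracting the Steinberg summand. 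Taking fibers of the comparison produces a filtration of $\Omega^m \Phi_n(S^m)$ whose $k$-th graded piece is $L_{T(n)}\Sigma^{1-k}L(k)_1^{m-1}$. For $k = 0$, $L(0,1) = L(0,m) = S^0$ with the connecting map being the identity, so $L(0)_1^{m-1} \simeq *$ and this piece drops out. Applying $\Sigma^m$ (and using that $\Phi_n(S^m)$ is periodic) finally yields the desired filtration of $\Phi_n(S^m)$ with fibers $L_{T(n)}\Sigma^{m+1-k}L(k)_1^{m-1}$ for $k = 1, \ldots, n$.

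The main obstacle is the naturality assertion in the second step: because $S^1 \to \Omega^{m-1}S^m$ is not a map of spheres, the compatibility between the Arone--Mahowald filtrations of $\Phi_n(S^1)$ and $\Phi_n(\Omega^{m-1}S^m)$ is not a formal consequence of the functoriality of the Goodwillie tower on $\T$; it requires tracking the Arone--Dwyer identification of the Goodwillie derivatives of the identity through the loop-suspension adjunction, and verifying that the resulting map on graded pieces is indeed the expected $L(k,1) \to L(k,m)$. Once that identification is in place, the remainder of the argument is straightforward diagram-chasing with the cofibration $L(k)_1^{m-1} \to L(k,1) \to L(k,m)$.
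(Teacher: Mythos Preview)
Your approach is essentially the same as the paper's: both apply $\Phi_n$ to the fiber sequence $\Omega_0^m S^m \to S^1 \to \Omega^{m-1}S^m$, invoke \thmref{phi theorem} for $S^1$ and $S^m$, and then use the fiberwise cofiber sequence $L(k)_1^{m-1}\to L(k,1)\to L(k,m)$ to read off the new filtration quotients. The paper compresses all of this into the phrase ``with a little diagram chasing''; your writeup spells out what that chasing actually is, and you are right to flag the naturality of the Arone--Mahowald identification along $S^1 \to \Omega^{m-1}S^m$ as the one point requiring care (the paper does not comment on it explicitly).
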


\begin{ex} When $p=2$, $L(1)_1^m = \mathbb RP^{m}$.  Specializing to $n=1$,  we learn that, for there is a weak equivalence
$$ \Phi_1(S^{2k+1}) \simeq L_{T(1)}\Sigma^{2k+1}\mathbb RP^{2k}.$$
Specializing to $n=2$, we learn that there is a fibration sequence of spectra
$$ \Phi_2(S^{2k+1}) \ra L_{T(2)}\Sigma^{2k+1}\mathbb RP^{2k} \ra L_{T(2)}\Sigma^{2k} L(2)_1^{2k}.$$
The first of these is equivalent to an older theorem of Mahowald \cite{mahowald} that said that the James--Hopf map $\Omega^{2k} S^{2k+1} \ra Q \Sigma \mathbb RP^{2k}$ induces an isomorphism on $v_1$--periodic homotopy groups.  The odd prime version of this is due to Rob Thompson \cite{thompson}.
\end{ex}

\subsection{$\Phi_1(Z)$ for many $Z$}

There is a huge amount known about $v_1^{-1}\pi_*(Z)$ thanks to the prodigious efforts of Bousfield, together with Don Davis and his collaborators.  A survey article by Davis \cite{davis} describes computations known by the mid 1990's. In recent years, beginning with \cite{bousfield99}, there has been an explosion of new, more elegantly organized, computations, often explictly describing $\Phi_1(Z)$ enroute: see the references below for entries into the recent literature.

Ingredients special to the $n=1$ case that enter the story include the following.
\begin{itemize}
\item The identification of $\Phi_1(S^{2k+1})$ as described above. 
\item The fact that $L_{T(1)} = L_{K(1)}$.
\item The identification of $L_{K(1)}S^0$ as the fiber of an appropriate map of the form $\Psi^r - 1: KO \ra KO$ \cite{old bousfield}.
\item A tight relationship between a maps of spaces being $K(1)_*$--isomorphisms and being a $\Phi_1$--equivalences \cite{bousfieldJAMS}.
\end{itemize}

In summary, for appropriate spaces $Z$, $v_1^{-1}\pi_*(Z)$ is essentially determined by $KO^*(Z;\Z_p)$, together with Adams operations.  Bousfield's recent careful study \cite{bousfield5} is state of the art in this area.  Davis \cite{davis02} gives many complete calculations when $Z$ is a compact Lie group, with calculations beginning with knowledge of the Lie group's represenation ring.  A very recent amusing result in this spirit is due to Martin Bendersky and Davis \cite{benderskydavis}, and says that there is a 2--primary homotopy equivalence
$$ \Phi_1(DI(4)) \simeq L_{K(1)} \Sigma^{725019}T \sm M,$$
where $DI(4)$ is the Dywer--Wilkerson exotic $2$--compact group, $T$ is the three cell finite spectrum $S^0 \cup_{\eta} e^2 \cup_2 e^3$, and $M$ is a mod $2^{21}$ Moore spectrum.

\end{document}